\pgfplotsset{compat=1.11}
\newcolumntype{e}{>{\displaystyle}r @{\,} >{\displaystyle}c @{\,} >{\displaystyle}l}
\theoremstyle{plain}
\newtheorem{Theorem}{Theorem}[section]
\newtheorem{Corollary}[Theorem]{Corollary}
\newtheorem{Lemma}[Theorem]{Lemma}
\newtheorem{Proposition}[Theorem]{Proposition}
\newtheorem{Question}[Theorem]{Question}
\newtheorem{Claim}[Theorem]{Claim}
\theoremstyle{definition}
\newtheorem{Definition}[Theorem]{Definition}
\newtheorem{Remark}[Theorem]{Remark}
\newcommand{\T}{\mathbb{T}}
\newcommand{\Z}{\mathbb{Z}}
\newcommand{\s}{\mathbf{s}}
\newcommand{\w}{\mathbf{w}}
\newcommand{\B}{\mathbf{B}}
\newcommand{\ow}{\overline{\mathbf{w}}}
\newcommand{\gpath}{\mathbf{\Xi}}
\newcommand{\cyl}{\mathscr{C}}
\newcommand{\Poi}{\operatorname{Poi}}
\newcommand{\ent}{\mathbf{b}}
\renewcommand{\o}{\mathbf{a}}
\newcommand{\Fo}{\operatorname{FPP}_1}
\newcommand{\Tol}{\overline{\operatorname{T}}_{1}}
\newcommand{\Aol}{\overline{\operatorname{A}}}
\newcommand{\Fl}{\operatorname{FPP}_\lambda}
\newcommand{\cin}{\mathbf{c}_{\operatorname{in}}}
\newcommand{\cout}{\mathbf{c}_{\operatorname{out}}}
\renewcommand{\P}{\mathbb{P}}
\renewcommand{\r}{\mathbf{r}}
\renewcommand{\u}{\mathbf{u}}
\renewcommand{\j}{\mathbf{j}}
\renewcommand{\i}{\mathbf{i}}
\numberwithin{equation}{section}
  \newcounter{constant}
  \newcommand{\nc}[1]{\refstepcounter{constant}\label{#1}}
  \newcommand{\uc}[1]{c_{\textnormal{\tiny \ref{#1}}}}
\newcounter{radius}
  \newcommand{\nr}[1]{\refstepcounter{radius}\label{#1}}
  \newcommand{\ur}[1]{\mathbf{r}_{\textnormal{\tiny \ref{#1}}}}
\begin{document}

\title{Coexistence of competing first passage percolation on hyperbolic graphs}

\author{Elisabetta Candellero\footnote{ecandellero@mat.uniroma3.it;  Universit\`a Roma Tre, Dipartimento di Matematica e Fisica, Largo S.\ Murialdo 1, 00146, Rome, Italy.} \and Alexandre Stauffer\footnote{a.stauffer@bath.ac.uk;  University of Bath, Dept of Mathematical Sciences, BA2 7AY Bath, UK.}}

\maketitle

\begin{abstract}
   We study a natural growth process with competition, which was recently introduced to analyze MDLA, a challenging model for the growth of an aggregate by diffusing particles. 
   The growth process consists of two first-passage percolation processes $\Fo$ and $\Fl$, spreading with rates $1$ and $\lambda>0$ respectively, on a graph $G$.
   $\Fo$ starts from a single vertex at the origin $o$, while the initial configuration of $\Fl$ consists of infinitely many \emph{seeds} distributed according to a product of 
   Bernoulli measures of parameter $\mu>0$ on $V(G)\setminus \{o\}$.
   $\Fo$ starts spreading from time 0, while each seed of $\Fl$ only starts spreading after it has been reached by either $\Fo$ or $\Fl$.
   A fundamental question in this model, and in growth processes with competition in general, is whether the two processes coexist (i.e., both produce infinite clusters) with positive probability.
   We show that this is the case when $G$ is vertex transitive, non-amenable and hyperbolic, in particular, for any $\lambda>0$ there is a $\mu_0=\mu_0(G,\lambda)>0$ such that for all $\mu\in(0,\mu_0)$ 
   the two processes coexist with positive probability. 
   This is the first non-trivial instance where coexistence is established for this model. 
   We also show that $\Fl$ produces an infinite cluster almost surely for any positive $\lambda,\mu$, establishing fundamental differences with the behavior of such processes on $\Z^d$.
\end{abstract}

\noindent
\underline{Keywords}: First passage percolation, first passage percolation in hostile environment, hyperbolic graphs, non-amenable graphs, competition, coexistence, two-type Richardson model

\section{Introduction}

%

%
%
 

%

We consider a randomly growing process with competition, which was introduced in~\cite{Stauffer-Sidoravicius-MDLA} under the name of \emph{first passage percolation in a hostile environment} (FPPHE). 
FPPHE consists of two first passage percolation processes, denoted $\Fo$ and $\Fl$, which spread inside an infinite graph $G$.
At time $0$, $\Fo$ occupies only a single vertex of $G$, called \emph{the origin} $o$, 
whereas $\Fl$ starts from countably many ``sources'', which we call \emph{seeds} and are distributed according to a product of Bernoulli measures of parameter $\mu\in (0,1)$ on $V(G)\setminus\{o\}$.

The process evolves from time $0$ as follows. 
$\Fo$ starts spreading through the edges of $G$ at rate $1$. 
On the other hand, $\Fl$ does not start spreading from time $0$, but waits.  
Whenever a process (either $\Fo$ or $\Fl$) attempts to occupy a vertex that hosts a seed of $\Fl$, the attempt fails, the 
seed is \emph{activated}, and $\Fl$ starts spreading from that seed 
through the edges of $G$ at rate $\lambda>0$. Seeds that have not been activated remain dormant until they are activated.
A vertex that is occupied by either of the processes will remain so forever, and will never be occupied by the other process; hence the two processes compete for space as they grow.

We say that $\Fo$ (resp.\ $\Fl$) \emph{survives} if in the limit as time goes to infinity we obtain that $\Fo $ (resp.\ $\Fl$) occupies an infinite \emph{connected} region of the graph; otherwise we say that it 
\emph{dies out}. 
We stress that the region must be connected for the definition of survival, since almost surely $\Fl$ already starts from an infinite set of seeds. In particular, when we say that $\Fl$ ``dies out'', 
we mean that it ends up consisting of an infinite collection of 
connected regions, each of which almost surely of finite size. 

FPPHE was introduced in~\cite{Stauffer-Sidoravicius-MDLA} in order to study MDLA, a classical aggregation process showing dendritic growth. 
Nonetheless, FPPHE is a very interesting process in its own right. In particular, simulations show that FPPHE has a very rich behavior, producing a delicate geometry similar to dendritic growth and 
undergoing phase transitions.
In Figure~\ref{fig:fpphe} one can see how, by increasing slightly the density of seeds, the set of vertices occupied by $\Fo$ becomes extremely thin.
This process is a modified version of the classical FPP model and the Richardson model for competing species with different growth rates.
Yet, a fundamental difference is that the Richardson model is a monotone process in the sense that one can easily devise a coupling to show that if one increases the set of sites occupied by one of the types, this type can only occupy a larger portion of the space. 
On the other hand, FPPHE is not monotone at all: for instance, in some graphs, increasing the initial density $\mu$ can increase the probability that $\Fo$ survives (see e.g.\ \cite{Candellero-Stauffer-NotMonotone}). 
Related phenomena for the Richardson model have been investigated in \cite{Deijfen-Haggstrom}.

\begin{figure}[tbp]
   \begin{center}
      \hspace{\stretch{1}}
      \includegraphics[width=.3\textwidth]{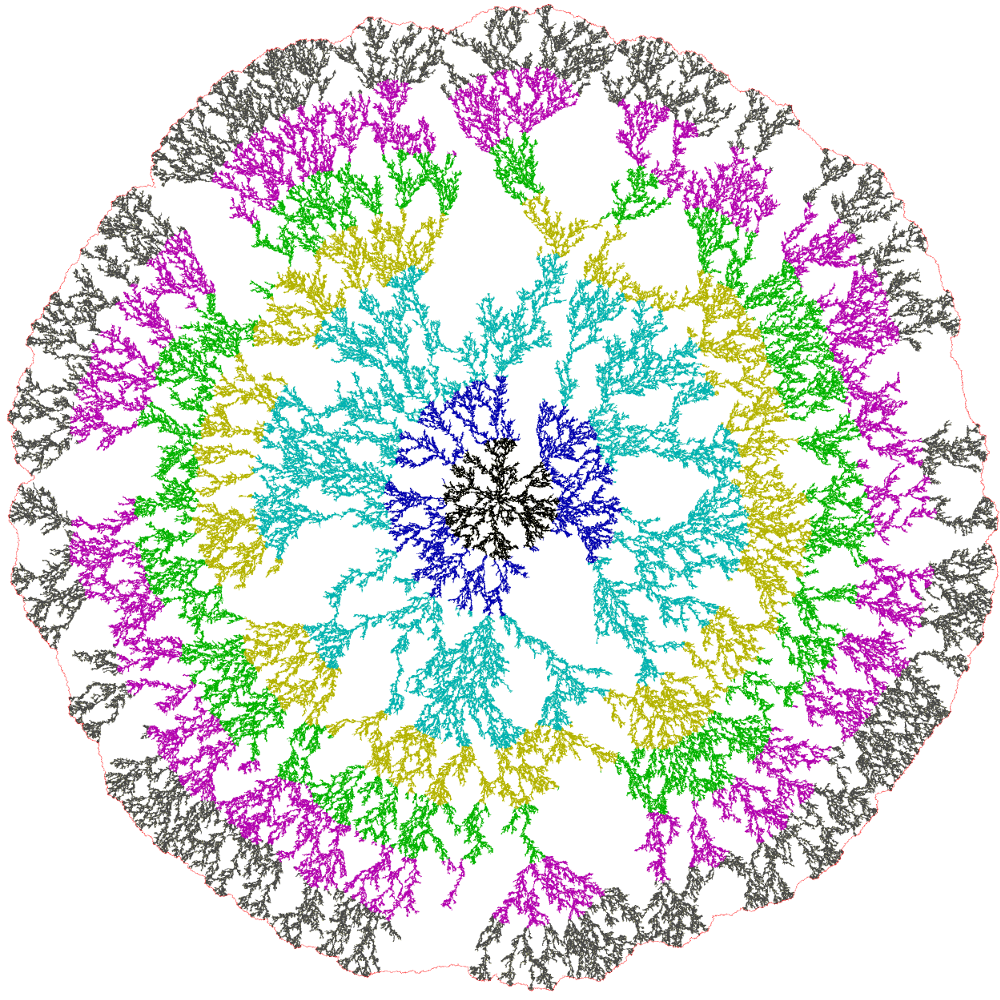}
      \hspace{\stretch{1}}
      \includegraphics[width=.3\textwidth]{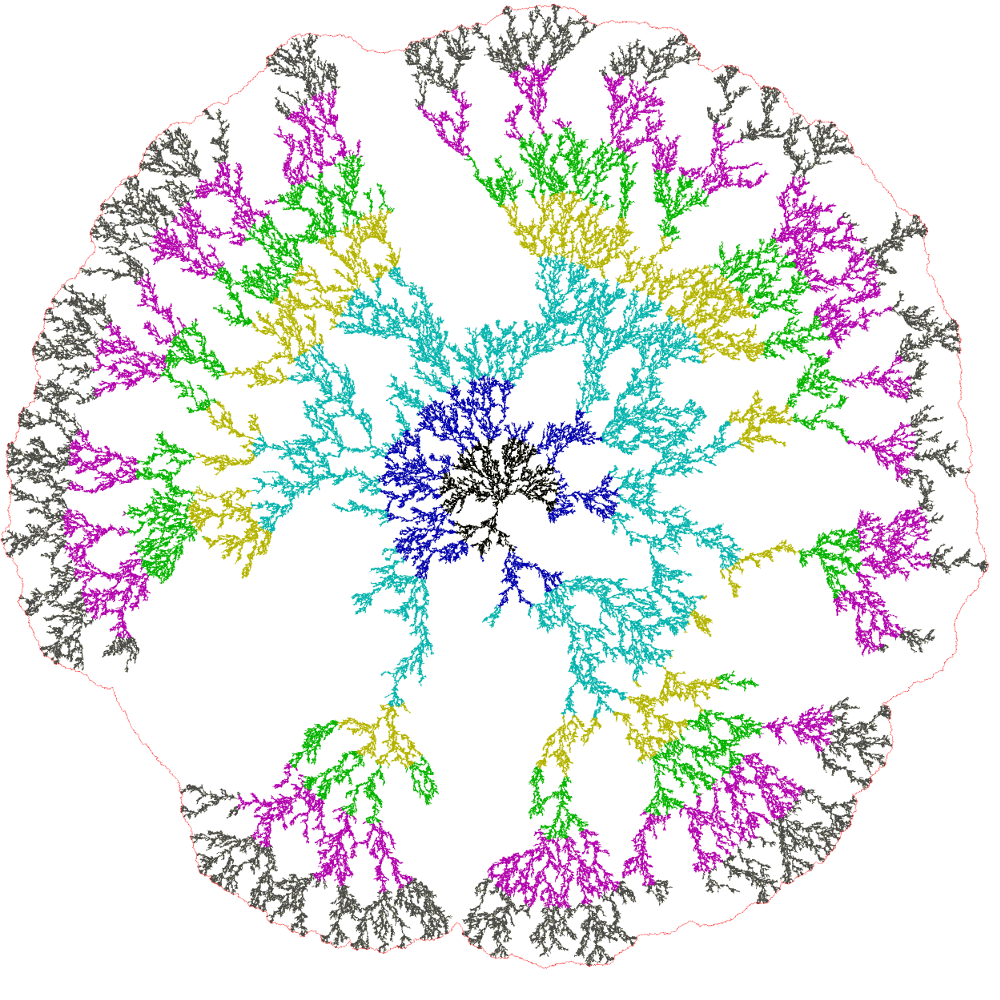}
      \hspace{\stretch{1}}
      \includegraphics[width=.3\textwidth]{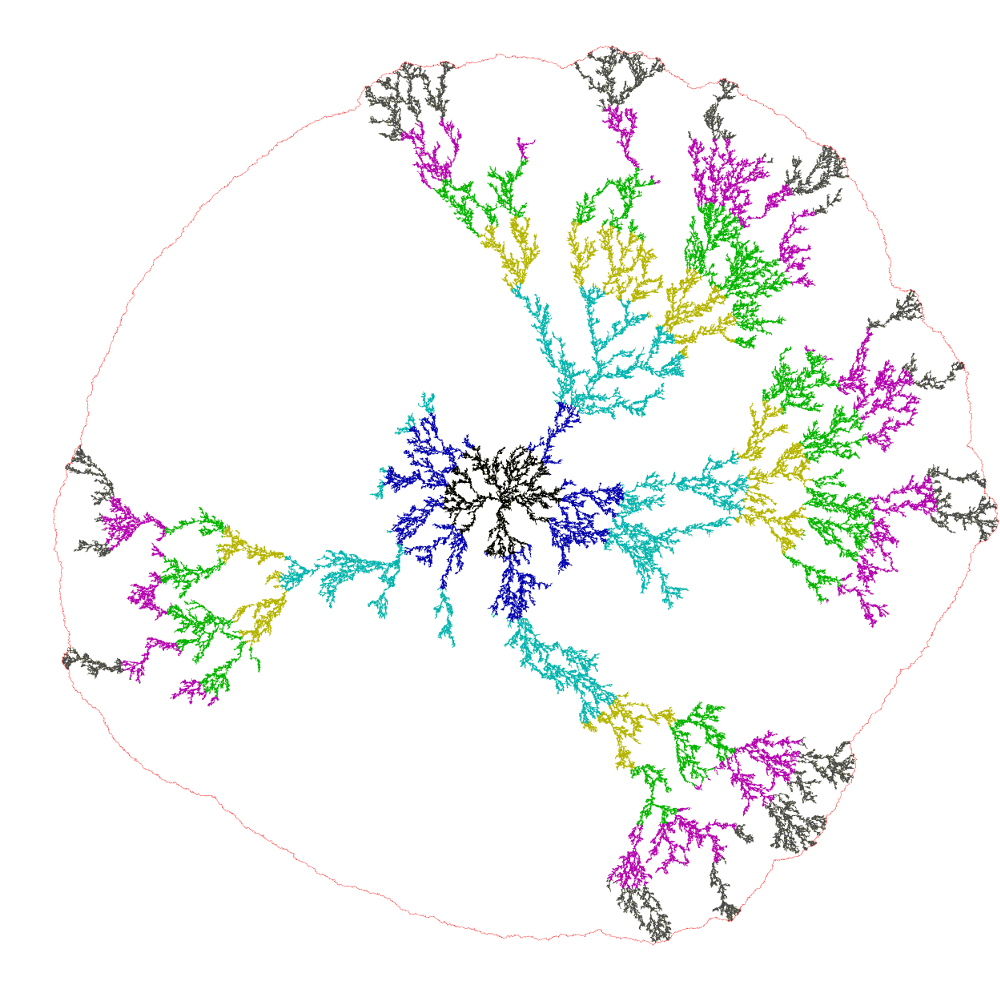}
      \hspace{\stretch{1}}
   \end{center}\vspace{-.5cm}
   \caption{A representation of FPPHE on the lattice $\Z^2$ with $\lambda=0.7$ and $\mu=0.027, 0.029$ and $0.030$, respectively. Colors represent different epochs of 
   the growth of $\Fo$, while the thin curve at the boundary represents the boundary between $\Fl$ and vertices that are either unoccupied or host an inactive seed of $\Fl$. 
   The whole white region within this boundary 
   is occupied by activated $\Fl$.}
   \label{fig:fpphe}
\end{figure}

There are three possible behaviors for FPPHE:
\begin{itemize}
   \item \emph{Extinction}, where $\Fo$ dies out \emph{almost surely}\footnote{For any $\mu>0$ and any locally finite $G$, with positive probability $\Fo$ dies out; see Remark~\ref{rem:fodies}.},

   \item \emph{Strong survival}, where  with positive probability 
$\Fo$ survives but $\Fl$ dies out, 

   \item \emph{Coexistence}, where with positive probability $\Fo$ and $\Fl$ survive simultaneously.  
\end{itemize}
The above definitions do not exclude that, for some values of $\mu$ and $\lambda$, FPPHE is both in the strong survival regime and in the coexistence regime, though it is believed that the three regimes are 
mutually exclusive. 

The extinction regime occurs, for example, when $1-\mu < p_\mathrm{c}(G)$, where  
$p_\mathrm{c}(G)$ is the critical probability for site percolation on $G$ (see~\cite{Grimmett} for a classical reference on percolation and \cite{CandelleroTeixeira, Yadin} for recent results on the non-triviality of 
$p_\mathrm{c}$ on rather general amenable graphs). In fact, when $1-\mu < p_\mathrm{c}(G)$, the set of vertices of $V(G)$ not occupied by seeds of $\Fl$ consists only of finite clusters; thus $\Fo$ is confined to the finite cluster containing the origin.
The other two regimes are much more interesting and less understood.
For example, in Figure~\ref{fig:fpphe}, the leftmost picture seems to be in a regime of strong survival, nonetheless $\Fl$ manages to conquer quite large regions, evidencing the 
long-range dependencies that appear in this process.
%
%
%
%
%

It is natural to expect that decreasing $\mu$ or $\lambda$ should favor the survival of $\Fo$. 
Unfortunately, 
there is no proof of such monotonicity. In fact, one can even engineer instances of the evolution of the process (that is, instances of passage times and locations of seeds of $\Fl$) 
such that removing a seed of $\Fl$ or slowing down the spread of $\Fl$ through a 
single edge could actually \emph{harm} the survival of $\Fo$. 
This lack of monotonicity makes this process particularly challenging to analyze. 

%
In~\cite{Stauffer-Sidoravicius-MDLA} FPPHE was analyzed with $G$ being the lattice $\mathbb{Z}^d$. 
They developed a very involved proof, based on a multi-scale analysis, to show that there exists strong survival for $d\geq 2$. 
More precisely, in~\cite[Theorem 1.2]{Stauffer-Sidoravicius-MDLA}, it is shown that for \emph{any} $\lambda<1$, there exists 
$\mu_0>0$ such that if $\mu \in (0,\mu_0)$ then there is strong survival. 
Unfortunately,~\cite{Stauffer-Sidoravicius-MDLA} gives no information regarding whether FPPHE has a coexistence regime in $\mathbb{Z}^d$, $d\geq 2$, which remains a fascinating open problem. 
In general, establishing coexistence is even more challenging than establishing strong survival. 
The main reason is that, in the strong survival regime, each seed of $\Fl$ that gets activated will produce a finite cluster.
Thus, after some finite time, $\Fo$ will go around that seed and encapsulate its cluster inside a finite region.
At that moment, the presence of that seed ceases to interfere with the spread of $\Fo$. 
On the other hand, in a coexistence regime, dependencies are even stronger (in the sense that their range is unbounded) since there will be seeds of $\Fl$ that will have an everlasting effect in the evolution of $\Fo$. 
Our main contribution is to establish for the first time (with the exception of the trivial cases below)
a regime of coexistence for FPPHE. 

When $d=1$, the strong survival regime and the coexistence regime do not exist. 
More generally, when $G$ is a tree, or even a free product (see Section~\ref{sect:questions} for a definition and a more thorough discussion), 
the regime of strong survival cannot exist since $\Fo$ is not able to ``go around and block the spread'' of an activated seed of 
$\Fl$. 
When $G$ is a tree, survival of $\Fo$ translates to the origin being in an infinite cluster of the graph obtained from $G$ by removing the vertices occupied by seeds of $\Fl$. 
Consequently, on trees, coexistence occurs 
whenever $1-\mu > p_\mathrm{c}(G)$; an analogous result can be derived for free products. 

\subsection{Our results}
We will consider the case of $G$ being \emph{hyperbolic} and \emph{non-amenable} (see Section \ref{sect:why-hyperbolic} for a rigorous definition). 
The main goals of this paper are to establish a regime of coexistence for FPPHE, and to 
show that FPPHE on such graphs has a substantially different behavior than on $\mathbb{Z}^d$.

Roughly  speaking, hyperbolicity means that the sides of any geodesic triangle are ``close'' to one another, and non-amenability means that each finite subset has a large boundary.
Hyperbolic and non-amenable graphs are classical and important classes of graphs, for example, hyperbolic graphs were introduced by Gromov \cite{Gromov} and can be seen as a discrete analogue of manifolds with negative curvature.
We will further consider natural assumptions on $G$, such as vertex transitivity and boundedness of degrees.

In the first theorem, we show that $\Fo$ survives with positive probability for any value of $\lambda$, provided that $\mu$ is small enough.
This theorem does not require $G$ to be transitive.
\begin{Theorem}[Survival of $\Fo$]\label{thm:survival_FPP_1}
   Let $G$ be a hyperbolic, non-amenable graph of bounded degree.
   For any growth rate $\lambda> 0$ of $\Fl$, there is a value $\mu_0=\mu_0(G,\lambda)>0$ such that whenever $\mu\in(0,\mu_0)$, $\Fo$ survives with positive probability.
\end{Theorem}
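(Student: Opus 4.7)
The plan is to exploit the exponential growth of $G$ to embed in it a tree $T$ of geodesic segments along which $\Fo$ can reliably propagate, and then to use hyperbolicity to bound the reach of $\Fl$ clusters originating outside $T$, so that for $\mu$ sufficiently small the resulting ``safe'' subtree is infinite with positive probability.

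\textbf{Step 1: a tree of segments with supercritical seed-free percolation.} I would first construct a rooted tree $T\subset G$ at $o$ whose edges correspond to pairwise disjoint geodesic segments of some fixed length $L$, and whose internal vertices have branching factor $b$. Non-amenability combined with bounded degree forces exponential growth of balls, and combined with hyperbolicity allows one to extract at each node of $T$ a constant fraction of ``well-separated'' geodesic continuations, so that one may take $b=\lfloor\alpha^L\rfloor$ for some $\alpha>1$ depending only on $G$. Thickening each segment into a tubular neighborhood $\ocyl$ of radius $r$, declare an edge of $T$ to be \emph{good} if no seed of $\Fl$ lies in $\ocyl$; the probability of this event is at least $(1-\mu)^{cL}$ for a constant $c=c(r)$. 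For $\mu$ small enough and $L$ large enough, $b(1-\mu)^{cL}>1$, and standard Galton--Watson comparison yields that the good edges of $T$ contain an infinite subtree $T^{\text{good}}$ rooted at $o$ with positive probability.

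\textbf{Step 2: racing $\Fo$ along $T^{\text{good}}$.} The second step is to show that $\Fo$ actually fills $T^{\text{good}}$ without being intercepted by $\Fl$. Along $T^{\text{good}}$ the tubes $\ocyl$ contain no seeds, so $\Fo$ is only threatened by $\Fl$ clusters originating at seeds \emph{outside} every $\ocyl$. By Kesten-type concentration for first-passage percolation on bounded-degree graphs, $\Fo$ traverses each length-$L$ segment in time $L(1+o(1))$; meanwhile $\Fl$ starting from a seed activated at time $t$ cannot reach a point at distance $d$ before time $t+d/\lambda$. Here hyperbolicity enters crucially: the thin-triangles property implies that the ``shadow'' on $\ocyl$ of a ball of radius $\rho$ centered at an exterior point at distance $d$ from $\ocyl$ has bounded diameter, so that seeds at hyperbolic distance $\gtrsim \lambda L$ from $\ocyl$ cannot intercept a given point of $\ocyl$ before $\Fo$ arrives. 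The remaining potentially dangerous seeds lie in a bounded enlargement of $\ocyl$; enlarging the forbidden zone in Step~1 (replacing $(1-\mu)^{cL}$ by $(1-\mu)^{c'L}$ for a larger $c'$) rules them out while still preserving supercriticality when $\mu$ is small enough.

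\textbf{Main obstacle.} The central difficulty is the well-known absence of monotonicity of FPPHE in $\mu$ and $\lambda$: one cannot argue via comparison with a modified process, so Steps~1 and~2 must be carried out on a single probability space driven by a common seed configuration and common exponential edge weights for $\Fo$ and $\Fl$, with the good-subtree event and the non-interception event expressed in terms of the same randomness. A second, largely geometric, obstacle is obtaining approximate independence of the good-edge events across different branches of $T$, which is what enables the Galton--Watson domination; this is supplied by the exponential divergence of geodesics in a hyperbolic metric space, which allows one to space out the branches of $T$ so that their tubular neighborhoods are pairwise disjoint at every scale. Quantifying this divergence and choosing the parameters $L$, $r$, $b$ to pin down the final threshold $\mu_0(G,\lambda)$ is where I expect the delicate work of the proof to lie.
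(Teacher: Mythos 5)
Your overall blueprint --- embed a supercritical tree of geodesic segments into $G$, demand that a tubular neighborhood of each segment be seed-free, and then race $\Fo$ along the resulting ``good'' subtree --- is in the same family as the paper's argument, which embeds a tree $\T$ built on the bilipschitz-embedded binary tree of Benjamini--Schramm and then finds an infinite good path. However, the crux of the argument is missing from your proposal, and I believe Step~2 as written is incorrect, particularly when $\lambda>1$.

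The first issue is that your bound for $\Fl$ is not a true statement. You write that ``$\Fl$ starting from a seed activated at time $t$ cannot reach a point at distance $d$ before time $t+d/\lambda$,'' but passage times are i.i.d.\ exponentials, so there is a positive probability that $\Fl$ traverses a length-$d$ path arbitrarily fast. What you actually need is a simultaneous high-probability lower bound on the passage time of \emph{every} relevant path near the tube, of the form $T(P)\geq |P|/\cout$. Crucially, these paths come in all lengths (a seed activated by $\Fo$ at one point of the tube may threaten a point much further along), so the lower bound must hold at all scales. This is exactly why the paper introduces the multi-scale ``good cylinder'' condition $\mathcal{G}_1(x,y;\j\r)$ for every pair $x,y$ on the tree, not just for adjacent tree edges. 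Your proposal only has a single-scale condition (seed-free tube around each edge), which is insufficient.

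The second and deeper issue is the claim that ``the remaining potentially dangerous seeds lie in a \emph{bounded} enlargement of $\ocyl$.'' This is false when $\lambda>1$. Suppose a seed $\s$ sits at $d_G$-distance $D$ from the tube and projects to a point $u\in\gpath$. By thin triangles, for $\Fl$ from $\s$ to reach $v\in\gpath$ with $d_G(u,v)=k$, it must cover roughly $D+k$ worth of passage time, while $\Fo$ covers roughly $k$. With $\Fl$-rate $\lambda$, $\Fl$ wins whenever $(D+k)/\lambda\lesssim k$, i.e.\ whenever $D\lesssim(\lambda-1)k$; for any fixed enlargement width this is violated once $k$ is large enough. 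The dangerous region therefore grows linearly in the distance travelled along $\gpath$, and cannot be absorbed into a constant-width seed-free tube. The paper's resolution is two-fold: (i) the seed-free condition is imposed with a width $\beta\r$ proportional to $\max\{\lambda,\lambda^{-1}\}$ around scale-1 cylinders; and (ii) for any seed that is still reached, $\Fl$ must return to $\gpath$ by making a detour around a geodesic segment, and the exponential-detour bound of Proposition~\ref{prop:detour_cylinders} combined with the all-scales lower bound on passage times makes that detour too slow. Your ``shadow of bounded diameter'' observation controls the \emph{angular} extent that a single far-away seed can threaten, but does not control the race along the tube afterwards; it does not substitute for the detour-length argument.

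Finally, a softer remark on Step~1: the paper does not need pairwise disjoint tubes. It uses the binary tree $\T$ (branching factor $2$), allows cylinders at different scales to overlap, and replaces Galton--Watson domination by a minimal-cutset union bound (Lemma~\ref{lemma:nr-cutsets}) together with an independence lemma (Claim~\ref{claim:indep-cutset}) that uses bilipschitz divergence of branches, not disjointness. Your Galton--Watson route with branching $b=\lfloor\alpha^L\rfloor$ and ``pairwise disjoint at every scale'' might be made rigorous, but the disjointness at all scales is not automatic from hyperbolicity and would require real work; the cutset argument sidesteps this. Were you to repair Step~2 by introducing a multi-scale passage-time control and the detour bound, the cutset route is probably the more economical way to assemble the good-path event.
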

A trivial and immediate (therefore not very interesting) example of a graph that satisfies the assumptions of Theorem \ref{thm:survival_FPP_1} is an infinite tree of bounded degree.
Less trivial examples are given by the graphs produced by regular tessellations of the hyperbolic plane.
\begin{Remark}\label{rem:fodies}
   $\Fo$ surviving \emph{with positive probability} is the best one can hope for since, for any $\mu>0$ and any $\lambda>0$, with positive probability $\Fo$ dies out: 
   for example, because all neighbors of $o$ are occupied by seeds of $\Fl$. 
   Furthermore, non-amenability is essential in Theorem~\ref{thm:survival_FPP_1} since on $\mathbb{Z}$ (which is hyperbolic but amenable) we obtain that $\Fo$ dies out for all $\mu,\lambda>0$.
\end{Remark}
Theorem~\ref{thm:survival_FPP_1} resembles the result of~\cite[Theorem 1.2]{Stauffer-Sidoravicius-MDLA} on $\mathbb{Z}^d$. The main difference is that, on $\mathbb{Z}^d$, 
$\Fo$ can only survive if $\lambda<1$. 
On hyperbolic, non-amenable graphs, $\Fo$ can survive even if $\lambda$ is arbitrarily large; that is, if $\Fl$ spreads at a much faster rate than $\Fo$. 

Our next result establishes that, regardless of the values of $\mu$ and $\lambda$, $\Fl$ survives almost surely; this does not even require non-amenability.
This is a striking difference with the behavior on $\mathbb{Z}^d$: for example, it shows that there is no strong survival on hyperbolic graphs.
\begin{Theorem}[Survival of $\Fl$]\label{thm:survival_FPP_lambda}
   Let $G$ be a vertex transitive, hyperbolic graph.
   For any $\mu\in (0,1)$ and $\lambda>0$, $\Fl$  survives almost surely.
\end{Theorem}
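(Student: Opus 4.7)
My plan is to show $P(\Fl\text{ survives}) > 0$ and then upgrade to almost-sure survival via a $0$-$1$ law. The positive-probability step splits on amenability: a vertex-transitive hyperbolic graph is either amenable and quasi-isometric to $\mathbb{Z}$ with two ends, or non-amenable with exponentially growing spheres; I treat the two regimes separately.

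\emph{Step 1 (0-1 law).} Since modifying seeds near $o$ can alter $\Fo$'s early growth, $\{\Fl\text{ survives}\}$ is not a tail event in the usual product-measure sense. Nevertheless, a $0$-$1$ alternative holds via ergodicity of the i.i.d.\ seed measure under the automorphism group of $G$, combined with an insertion/deletion-tolerance argument in the spirit of Burton--Keane (many disjoint infinite $\Fl$-clusters cannot coexist on a transitive graph). This reduces the task to showing positive probability.

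\emph{Step 2 (two-ended case).} I fix a bi-infinite geodesic $\gamma$ through $o$ and let $v$ be the first seed on $\gamma$ in one direction. Once $v$ is activated (by $\Fo$, or by a prior $\Fl$-cascade along $\gamma$), the quasi-$\mathbb{Z}$ structure of $G$ forces any alternative path from $o$ to the end beyond $v$ to pass through a bounded neighborhood of $v$, which is quickly filled with $\Fl$ after $v$'s activation. Thus $\Fo$ is blocked, and $\Fl$ cascades outward along $\gamma$ through the infinitely many successive seeds, producing an infinite $\Fl$-cluster almost surely.

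\emph{Step 3 (non-amenable case, the crux).} Fix a large $R$. Exponential sphere growth gives $|\partial B(o,R)| \gtrsim e^{cR}$, so a comparable number of seeds lie on $\partial B(o,R)$ with high probability. For each such seed $v$ I associate an ``outward shadow'' $S_v$, defined via hyperbolicity as the set of vertices $w$ whose $(o,w)$-geodesics pass within bounded distance of $v$; the $S_v$'s are infinite and approximately pairwise disjoint. The central claim is that, conditional on $v$ being a seed, $\Fl$ generates an infinite cluster inside $S_v$ with probability $\geq p > 0$, uniformly in $v$ and $R$. Heuristically, $\Fo$'s penetration into $S_v$ must squeeze through the bounded neighborhood of $v$ (where $\Fl$ has a geometric head start by the time $\Fo$ arrives), while inside $S_v$ the cascade of activations of interior seeds at density $\mu$ continually replenishes the $\Fl$-front, independently of how small $\lambda$ is. Given this uniform lower bound, exponentially many nearly-independent trials among the boundary seeds ensure that at least one succeeds, yielding $P(\Fl\text{ survives}) > 0$. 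The principal obstacle is establishing $p$ itself, particularly when $\lambda < 1$ so that $\Fl$ is locally outpaced by $\Fo$; my expectation is that one couples $\Fl$ within $S_v$ to a supercritical oriented percolation on a tree-like coarsening of $S_v$, where the near-tree branching structure and the independence between distinct shadows both come from hyperbolicity.
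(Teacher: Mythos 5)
Your proposal has two genuine gaps, and they are exactly at the two places where the paper does the real work.

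First, Step~1 is not justified. The event $\{\Fl\text{ survives}\}$ is not invariant under the automorphism group of $G$, because the process singles out the origin $o$; so ergodicity of the i.i.d.\ seed measure does not by itself give a $0$-$1$ law. An insertion/deletion-tolerance argument in the style of Burton--Keane is also unclear here: the FPPHE evolution depends on the full configuration in a non-monotone, highly non-local way (modifying a finite region of passage times or seeds can change the outcome of the race everywhere), so it is not evident how to make the relevant events tolerant. The paper does not attempt a $0$-$1$ law at all. Instead it runs the positive-probability construction repeatedly: each ``attempt'' is defined so that the events observed are measurable with respect to passage times in a bounded region of $G$; if an attempt fails, one waits until a stopping time $\tau_{j+1}$ at which all edges inspected so far have both endpoints absorbed by the aggregate $\Aol$, and then restarts from $\Aol_{\tau_{j+1}}$. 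Because each attempt succeeds with a probability uniformly bounded below (by $2/3$ in the paper, independent of the shape of $\Aol_{\tau_j}$, using vertex-transitivity), one attempt eventually succeeds almost surely. This restart mechanism is the replacement for your $0$-$1$ law, and you would need to build something like it.

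Second, and more seriously, Step~3 rests on the claim that a \emph{single} seed $v$ at distance $R$ from $o$ produces an infinite $\Fl$-cluster in its ``shadow'' $S_v$ with probability $p>0$ uniformly in $v,R,\lambda$, and you explicitly acknowledge you cannot establish this. Worse, the heuristic you offer for it is flawed: a single seed confers no geometric head start, since $\Fo$ can travel straight past $v$ through non-seed vertices in its immediate neighborhood; the ``squeeze'' you describe does not occur. The paper's key idea, which is missing from your argument, is to condition on an entire ball $\B^{(1)}$ of radius $R_1$ being completely filled with seeds (an event of probability $\geq \mu^{\Delta^{R_1}} > 0$). Once $\Fl$ occupies a whole ball, Proposition~\ref{prop:gromov-result} forces any $\Fo$-path from the aggregate to the far side to make a detour of length $\gtrsim \delta\, 2^{R_1/\delta}$; this exponentially long detour buys $\Fl$ the time it needs (no matter how small $\lambda$) to march along a carefully chosen quasi-geodesic ray $\overline{\gamma}$ (built via the Bogopolski-type Lemma~\ref{lemma:bog}) and fill a second, larger ball $\B^{(2)}$, and so on inductively, with the success probabilities of the stages summing to a constant $<1/3$. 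The ``ball of seeds forces an exponential detour'' mechanism is the crux of the theorem, and replacing your single-seed-plus-oriented-percolation heuristic with it is essential. The split into amenable and non-amenable cases is also unnecessary once you take this route: the argument degenerates gracefully on $\mathbb{Z}$-like graphs, where the detour is simply impossible.
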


As an immediate corollary, we establish coexistence for FPPHE.
\begin{Corollary}[Coexistence]\label{corollary:coexistence}
   Let $G$ be a vertex transitive, hyperbolic, non-amenable graph.
   For all $\lambda>0$ there is a value $\mu_0=\mu_0(G,\lambda)>0$ such that for all $\mu\in(0,\mu_0)$, $\Fo$ and $\Fl$ coexist with positive probability.
\end{Corollary}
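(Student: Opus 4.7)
The corollary is essentially an immediate consequence of the two preceding theorems, so the plan is short. First I would verify that the hypotheses of Theorem~\ref{thm:survival_FPP_1} are met: vertex transitivity implies that $G$ is regular, and since the graph is locally finite by assumption, its (constant) degree is finite, so $G$ has bounded degree. Together with hyperbolicity and non-amenability, this puts us in the setting of Theorem~\ref{thm:survival_FPP_1}, which supplies a threshold $\mu_0 = \mu_0(G,\lambda) > 0$ such that for every $\mu \in (0,\mu_0)$ the event $\{\Fo\text{ survives}\}$ has strictly positive probability.

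Next, observing that the hypotheses of Theorem~\ref{thm:survival_FPP_lambda} (vertex transitivity and hyperbolicity) are also satisfied, I would apply it to conclude that $\{\Fl\text{ survives}\}$ has probability one for \emph{every} $\mu \in (0,1)$ and every $\lambda > 0$, and in particular for the parameters in the range fixed above.

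Finally, coexistence is the intersection of these two events. Since any event of probability one intersects any other event without affecting its probability, we obtain
\begin{equation*}
   \P\bigl(\Fo\text{ and }\Fl\text{ both survive}\bigr) \;=\; \P\bigl(\Fo\text{ survives}\bigr) \;>\; 0
\end{equation*}
for every $\mu \in (0,\mu_0)$, which is the statement of the corollary. There is no real obstacle here: all of the substantive work has already been pushed into Theorems~\ref{thm:survival_FPP_1} and~\ref{thm:survival_FPP_lambda}, and the only thing to check at this stage is the trivial set-theoretic fact that intersecting with an almost sure event preserves positive probability.
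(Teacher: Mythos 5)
Your proposal is correct and matches the paper's reasoning exactly: the paper presents the corollary as an immediate consequence of Theorems~\ref{thm:survival_FPP_1} and~\ref{thm:survival_FPP_lambda}, relying on the same observation that intersecting a positive-probability event with an almost sure event preserves the probability. Your verification that vertex transitivity together with local finiteness yields bounded degree is a reasonable and correct detail to include.
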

\begin{proof}
The proof follows directly by putting together the statements of Theorem \ref{thm:survival_FPP_1} and Theorem  \ref{thm:survival_FPP_lambda}.
\end{proof}

\subsection{Related processes}\label{sec:ifpp}
Even though FPPHE was invented to analyze MDLA, growth processes with competition is a classical area of research. 
One example is the two-type Richardson model, which consists of $\Fo$ starting from $o$ and $\Fl$ starting from one active seed 
(for example, located at a neighbor of $o$); thus both $\Fo$ and $\Fl$ start spreading from time $0$. 
It is conjectured that 
coexistence occurs if and only if $\lambda=1$. 
The fact that coexistence occurs when $\lambda=1$ was established in~\cite{HP} on the two-dimensional lattice, and extended to other dimensions by~\cite{Hoffman} and~\cite{GM2005}
(see also~\cite{Hoffman2008,ADH}). 
The converse has not been fully resolved, but 
\cite{HP2} established that the set of values of $\lambda$ for which there is coexistence is at most a countable subset of $(0,\infty)$. 
Simulation suggests that, unlike the two-type Richardson model, FPPHE on $\mathbb{Z}^d$ has a regime of coexistence even when $\lambda\neq 1$; this seems to be a consequence of the richer set of behaviors 
coming from the interplay between $\lambda$ and $\mu$ in FPPHE.


For a simplified, deterministic version of the two-type Richardson model, 
\cite{Benjamini} shows that, on a hyperbolic graph, coexistence is possible for \emph{all} values of $\lambda>0$. Our proof of Theorem~\ref{thm:survival_FPP_lambda} can also 
be used to establish this result for the actual two-type Richardson model. This is the content of the next corollary, which does not need non-amenability.
\begin{Corollary}
   Consider the two-type Richardson model defined above on a vertex transitive, hyperbolic graph.
   Then, for any $\lambda>0$, with positive probability both $\Fo$ and $\Fl$ survive.
\end{Corollary}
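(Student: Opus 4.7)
The plan is to adapt the strategy of Theorem~\ref{thm:survival_FPP_lambda}, which shows that on a vertex transitive hyperbolic graph a first-passage cluster can grow to infinity along a ``corridor'' provided by the hyperbolic geometry, with positive probability. In the two-type Richardson model both processes start simultaneously from single vertices ($o$ and a neighbor $v_0$), and we must show both survive \emph{jointly}. The guiding idea is to exhibit two divergent geodesic rays going to distinct boundary points of $G$, assign one to each process, and let each process grow inside its own ray.

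To set this up, I would first choose two boundary points $\xi_1,\xi_2\in\partial G$ with small Gromov product relative to $o$. Such a choice exists unless $G$ is quasi-isometric to $\Z$, in which case the corollary is immediate because each process grows out to a different end; so we may assume $\partial G$ is uncountable. By the slim-triangle property of $\delta$-hyperbolic spaces, any geodesic rays $\gamma_1$ from $o$ to $\xi_1$ and $\gamma_2$ from $v_0$ to $\xi_2$ satisfy a linear divergence estimate $d(\gamma_1(s),\gamma_2(t))\geq s+t-C$ for all large $s,t$ and some constant $C$. Widening each ray into a tube whose radius grows linearly with a sufficiently small slope then yields two disjoint ``corridors'' $T_1$ and $T_2$, each containing the vertices whose asymptotic growth along its ray we want to track.

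I would then invoke the core estimate underlying Theorem~\ref{thm:survival_FPP_lambda}: inside a hyperbolic corridor, a first-passage process restricted to the corridor reaches infinity at its natural linear speed with positive probability, and its growth front stays inside the tube. Applying this to $\Fo$ in $T_1$ and to $\Fl$ in $T_2$ (using vertex transitivity to translate the latter from $o$ to $v_0$), each corridor event has positive probability. Because the corridors are disjoint and diverge linearly, these events depend on essentially disjoint collections of edge passage times, so an FKG/Harris-type decorrelation applied to the (monotone) passage-time variables gives a positive joint probability, hence coexistence.

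The main obstacle I anticipate is the initial competition near $o$ and $v_0$, before the two corridors have visibly separated. In this early phase either process could overrun the base of its rival's corridor, or be pinched off entirely. I would deal with it by isolating a positive-probability event on the finitely many passage times inside a bounded neighborhood of $\{o,v_0\}$ on which each process secures a linear head start in its assigned corridor; from that configuration the corridor estimate takes over. Calibrating the corridor widths, the Gromov distance between $\xi_1$ and $\xi_2$, and the size of the initial neighborhood so that the two processes genuinely decouple after startup---while the joint event still has positive probability---is the delicate part, and it is precisely the geometric balancing that hyperbolicity makes possible.
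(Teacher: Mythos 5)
Your intuition---let $\Fo$ and $\Fl$ escape along distinct rays to distinct boundary points---is consistent with how the paper reuses the proof of Theorem~\ref{thm:survival_FPP_lambda}, but the mechanism you propose to make the two escapes compatible does not work, and the lemma you would actually need is missing from your argument. You claim that the events ``$\Fo$ survives inside $T_1$'' and ``$\Fl$ survives inside $T_2$'' are supported on essentially disjoint collections of passage times and therefore can be combined by independence or FKG. This is false: for $\Fo$ to remain alive inside $T_1$ one must prevent $\Fl$ from entering $T_1$, and $\Fl$ spreads through all of $G$, not just through $T_2$. The event therefore depends on passage times on edges far outside $T_1$ (indeed arbitrarily far), so it is not measurable with respect to $\{t_e:e\in T_1\}$, and the two events are not supported on disjoint edge sets. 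FKG does not help either, because ``$\Fo$ survives'' and ``$\Fl$ survives'' are not both monotone in the same direction in the passage-time variables; increasing $t_e$ near $v_0$ favours one and disfavours the other. There is no general positive-association argument available here; the long-range dependence is exactly the difficulty the paper emphasizes throughout.

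What actually makes the two-ray idea work is the exponentially long detour lemma (Proposition~\ref{prop:gromov-result}, extended to cylinders in Proposition~\ref{prop:detour_cylinders}): once a process owns a ball of radius $R$ on a geodesic, the competitor must traverse a path of length at least $\delta\,2^{R/\delta}$ to get past it, so one compares passage times of $\Fl$ along the ray against passage times of $\Fo$ around the ball and wins with high probability. You misidentify the ``core estimate underlying Theorem~\ref{thm:survival_FPP_lambda}'' as a statement that FPP inside a tube advances linearly and ``stays inside the tube''; in fact FPP does not stay inside a tube, and the actual core estimate is the detour bound together with the nested sequence of balls $B^{(k)}$ of geometrically growing radii and the careful measurability bookkeeping of the stopping times $\tau_j$. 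Without invoking the detour bound you have no way to stop $\Fl$ (say, when $\lambda>1$) from simply growing out of $T_2$, engulfing $o$, and cutting off $\Fo$; a linear-divergence estimate on the two rays and a finite head start near $\{o,v_0\}$ do not on their own prevent this. The positive-probability event you should be constructing is the two-type Richardson analogue of $\mathcal{E}_{\tau_0}^{(1)}$: with positive probability $\Fl$ occupies a large ball around $v_0$ quickly while $\Fo$ occupies a large ball around $o$ quickly, and then the nested-ball escape of Section~\ref{sect:survival_FPP_lamda} is run simultaneously for both processes along two rays diverging from these balls, using the detour bound (not independence) to control each process's interference with the other.
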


FPPHE can also be regarded as a natural model for the spread of two conflicting rumors throughout a network. 
In this setting, $\Fo$ represents a false rumor that starts spreading from time $0$ from the origin.
Vertices hosting seeds of $\Fl$ represent the nodes of the network that can verify that the rumor is actually false. Thus when they receive the rumor (thereby becoming an activated seed), they start spreading 
the correct information.     
In this setting, $G$ being hyperbolic and non-amenable is of particular relevance, since 
several authors observed that some important real-world networks can be modeled by graphs equipped with a hyperbolic structure
\cite{Montgolfier,WSK-bis,Mahoney,Guo}.
The last one, in particular, investigates a randomized algorithm to stop the spread of a false rumor within a large network, 
by using its hyperbolicity properties. This line of work substantially deviates from ours. Nonetheless, our theorems show that, in a hyperbolic graph, in order to stop 
the spread of the false rumor via FPPHE, it is not enough to have $\lambda\geq 1$ (as simulations suggest for $\mathbb{Z}^d$), but one needs to have a sufficiently large density of seeds of $\Fl$.

\subsection{Ideas of the proofs}

\paragraph{Proof idea of Theorem~\ref{thm:survival_FPP_1}.}
In order to show survival of $\Fo$, 
we cannot employ the same strategy as in~\cite{Stauffer-Sidoravicius-MDLA} for $\mathbb{Z}^d$. The reason is that 
\cite{Stauffer-Sidoravicius-MDLA} showed \emph{strong survival}, and their proof technique resorts to showing that every activated seed of $\Fl$ gets eventually 
encapsulated by $\Fo$. In our case, we know from Theorem~\ref{thm:survival_FPP_lambda} that strong survival does not occur for any $\lambda,\mu$. So we had to develop a new strategy, 
which uses the hyperbolicity and non-amenability. 

First non-amenability gives that one can embed a binary tree inside $G$ (cf. Theorem \ref{thm:embedding} below).
Using this, we will construct a tree $\mathbb{T}$ where 
the ``edges'' of $\T$ will represent \emph{almost} geodesic paths of $G$ of some fixed length.
Exploring the exponential growth of $\T$, one could suspect that it is possible to find an infinite path $v_1,v_2,\ldots$ in $\T$ such that the distance between $v_i$ and the set of seeds increases with $i$. 
This would be convenient, as it would imply 
existence of an infinite path in $G$ that gets further and further away from seeds, 
which would give a positive probability for this path to get entirely occupied by $\Fo$.
However, one can show that an infinite path satisfying the above requirement simply does not exist. 

Our approach is to resort to a multi-scale analysis: we will look for  
an infinite path in $\T$ such that the passage times near this path are ``good'' (in the sense that they are close to their expected value) at all scales.  
To do this, we define certain \emph{cylinders} of different sizes, and each size is identified with a \emph{scale}.
The first scale consists of cylinders around each edge of $\T$, 
and each subsequent scale $\j\geq 2$ is formed by cylinders whose axis is a path of length $\j$ in $\T$, and whose width will grow linearly with $\j$.

We say that a cylinder $\cyl$ is good if FPP (of a single type, without competition) started anywhere inside $\cyl$ advances linearly in time (during a time interval of order $\j$). 
For scale $1$, we will also require that $\cyl$ is completely free of seeds to be good.
As a consequence, if $\cyl$ is good then it will follow that the paths traversed by FPPHE from one end of $\cyl$ to the other end will remain close to the axis of $\cyl$.
This is a consequence of the fact that 
hyperbolic graphs have long detours away from the geodesics (cf.\ Proposition \ref{prop:gromov-result} below), and such detours will have typical (or linear) passage times since $\cyl$ is a good cylinder.

We will then show that there is an infinite path $\gpath$ in the tree $\T$ that only intersects good cylinders. The existence of such a path implies that $\Fo$ advances quickly enough near a $d_G$-geodesic ray close enough to $\gpath$.
This does not allow $\Fl$ to get anywhere near $\gpath$ because for that to happen, a seed located outside of scale-$1$ cylinders has to be activated by $\Fo$ coming from $\gpath$ and then propagate $\Fl$ back to $\gpath$.
However, such a detour out of scale-$1$ cylinders is too long, and since $\gpath$ is covered by good cylinders at all scales, the passage time around such a detour will not succeed in bringing $\Fl$ back to $\gpath$ before $\Fo$. 
We emphasize here that this proof can be carried out even when $G$ is not transitive.


\paragraph{Proof idea of Theorem \ref{thm:survival_FPP_lambda}.}

To establish survival of $\Fl$ we proceed as follows. 
Let $\Fo$ run from the origin until it hits a large ball (of fixed radius) completely occupied by seeds.
This will eventually occur.
While $\Fo$ tries to go around this ball (for which, with high probability, it will require an amount of time that is exponential in the radius of the ball), 
$\Fl$ is free to proceed, activate all seeds in the ball and then continue to occupy further regions. In particular, $\Fl$ will have enough time to occupy a larger ball, which will then force $\Fo$ to do an even longer detour.
We will show that, with positive probability, $\Fl$ succeeds in occupying an infinite sequence of balls of increasing radii, before they can be reached by $\Fo$. When this happens, we say that this iteration succeeded.

An iteration succeeding implies that $\Fl$ survives. 
However the probability that an iteration succeeds is only positive, and there are difficulties in showing that $\Fl$ actually survives almost surely. 
For example, if an iteration is unsuccessful, 
one needs to take special care of the measurability of the events that have been observed. 
In particular, we need to define each iteration in such a way that we only observe events that are measurable locally (that is, 
with respect to a finite subset of edges). So, once an iteration fails, we can carry out another iteration inductively.
Another technical difficulty is given by the fact that whenever there is a failure, both $\Fo$ and $\Fl$ have already occupied large regions of $G$.
Thus, we need to carry out each iteration such that its success probability does not depend on the size of the previously explored areas of $G$. 
This is crucial since the set occupied by $\Fo$ grows over time, and each iteration starts from a ball of seeds of $\Fl$ of a large but fixed radius.
Using hyperbolicity, we show that this can be done in such a way that the probability of success of each iteration remains 
bounded away from zero as time goes, implying that eventually an iteration will succeed.
Finally, 
we also need to ensure that at each iteration
$\Fl$ has room to continue expanding ``towards 
infinity'', without getting trapped inside a dead-end of $G$, or curving back towards $o$.
It is in this part that we use transitivity of $G$. 

\subsection*{Outline of the paper}
In Section \ref{sect:preliminaries} we present fundamental properties of first passage percolation and hyperbolic graphs, and give a construction of FPPHE in terms of passage times. 
In Section~\ref{sect:tree} we construct the tree $\mathbb{T}$ described in the proof overview above. 
In Sections \ref{sect:scales} and \ref{sec:tpaths} we set up an inductive argument which is the core of the proof of Theorem \ref{thm:survival_FPP_1}, which is described in Section \ref{sect:survival_FPP_1}.
The proof of Theorem \ref{thm:survival_FPP_lambda} can be found in Section \ref{sect:survival_FPP_lamda}. 
We present conclusive remarks and open questions in Section \ref{sect:questions}, 
and prove a technical results regarding detours away from geodesics on hyperbolic graphs in Appendix \ref{sect:proof_prop_detour}.

\section{Preliminaries}\label{sect:preliminaries}
All graphs considered in this paper will be infinite and of bounded degree. For a graph $G$, we denote by $V(G)$ its set of vertices and by $E(G)$ its set of edges.

\subsection{Hyperbolic and non-amenable graphs}\label{sect:why-hyperbolic}
For any subset of vertices $S\subset V(G)$ the \emph{internal boundary} of $S$ is
\begin{equation}\label{eq:internal-bdary}
\partial S:= \{v\in S \ : \ \exists \, x\in V(G)\setminus S \text{ such that }\{x,v\}\in E(G)\}.
\end{equation}
Moreover, we recall that the \emph{Cheeger constant} of $G$ is defined as 
\begin{equation}\label{eq:cheeger}
h(G):= \inf_{S\subset V(G)}\frac{|\partial S|}{|S|},
\end{equation}
where $|S|$ denotes the cardinality of $S$, and the infimum is taken over all \emph{finite} sets $S \subset V(G)$.
A graph is non-amenable if and only if it has positive Cheeger constant.

Roughly speaking, $G $ is non-amenable if each finite subset has a large boundary, otherwise $G$ is called \emph{amenable}.
%
Non-amenability is often responsible for the appearance of intriguing phenomena in random processes. 
Notable examples are \emph{percolation}, in which non-amenability allows the appearance of infinitely many infinite components~\cite[Chapter 7]{LyonsPeres-book}, 
and \emph{branching random walks}, in which non-amenability brought about a fascinating regime where the process survives indefinitely but is nonetheless transient~\cite{BenjaminiPeres_BRW}.

Let $d_G$ denote the \emph{metric} induced by the shortest-path distance in $G$.
The graph $G$ is called $\delta$-\emph{hyperbolic}, where $\delta \in [0, \infty)$, if for any three vertices $x,y,z\in V(G)$ and any three connecting geodesic segments $\gamma_{x,y}, \gamma_{y,z}, \gamma_{z,x}$ 
between these vertices we have 
\begin{equation}\label{eq:def-hyperbolic}
\forall u\in \gamma_{x,y},  \text{ there is }v\in \gamma_{y,z}\cup \gamma_{z,x} \text{ such that }u\in B_G(v, \delta),
\end{equation}
where $B_G(v, \delta)$ is the ball (with respect to the $d_G$ metric) centered at $v$ with radius $\delta$, and we regard a path in $G$ (such as $\gamma_{x,y}$) as a sequence of vertices.
It is known (cf.\ e.g.\ \cite{hamann} and references therein) that $\delta=0$ if and only if $G$ is a tree, since our results are trivial for trees, we will always assume that $\delta>0$.
In words,~\eqref{eq:def-hyperbolic} means that the entire geodesic $\gamma_{x,y}$ is contained in the union of balls of radius $\delta$ centered at vertices of $\gamma_{y,z}\cup \gamma_{z,x} $.
In this case we shall say that the triangle with vertices $x,y,z$ is $\delta$-thin.
Thus a graph is $\delta$-hyperbolic if and only if each of its triangles is $\delta$-thin. 
If $G$ is $\delta$-hyperbolic for some $0\leq \delta < \infty$ we will simply say that $G$ is \emph{hyperbolic}.
From now on, whenever we consider triangles on $G$ we always mean \emph{geodesic} triangles, that is, their sides lie on geodesic segments.


A class of hyperbolic graphs of large interest is that of Cayley graphs of \emph{hyperbolic groups}.
These were introduced in \cite{RandomGroups-Gromov} in order to study properties of a randomly chosen group.
For example, it is shown in \cite{RandomGroups-Gromov} that if we choose a group with a finite (symmetric) set of generators and a random set of finite representations, 
then the resulting group is hyperbolic with high probability; see also \cite{RandomGroups-Silberman,RandomGroups-Ollivier}.
Recently there have been several works about 
the study of random processes on hyperbolic groups, usually establishing very different behavior with respect to such processes on lattices.
For instance,~\cite{GouezelLalley, Gouezel, Mathieu, Ledrappier} studied the asymptotic behavior of random walks on hyperbolic groups, 
\cite{BenjaminiTessera} investigated \emph{first-passage percolation} on hyperbolic groups and showed the presence of doubly-infinite geodesics, 
and~\cite{Hutchcroft} showed that Bernoulli bond percolation on a non-amenable, hyperbolic, quasi-transitive graph has an intermediate phase in which there are infinitely many infinite clusters. 

\paragraph{Embedding of a tree into $G$.}

A \emph{bilipschitz embedding} of a metric space $(X,d_X)$ into another metric space $(Y,d_Y)$ is a map $f:X\to Y$ such that for some constant $\widehat{\alpha} \geq 1$ and all vertices $u,v\in X$ we have
\[
\widehat{\alpha}^{-1}d_X(u,v)\leq d_Y \bigl ( f(u),f(v)\bigr )\leq \widehat{\alpha} d_X(u,v).
\]
The following result 
is going to be crucial in our proofs. It gives that one can embed a binary tree in any non-amenable graph (without requiring hyperbolicity).
\begin{Theorem}[{\cite[Theorem 1.5]{Benjamini-Schramm-GAFA}}]\label{thm:embedding}
   Let $H$ be a bounded degree, non-amenable graph, considered together with the graph metric. 
   Then there is a tree $T$ contained in $H$, with positive Cheeger constant $h(T) > 0$, such that the inclusion map $T \to H$ is a bilipschitz embedding, and there is a bilipschitz embedding of the binary tree into $H$.
\end{Theorem}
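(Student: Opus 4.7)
The plan is to build $T$ inductively as a subdivided binary tree, using non-amenability to supply enough branching room at every step. From $h(H) > 0$ and bounded degree $\Delta$, I would first derive the exponential ball-growth estimate $|B_H(v, R)| \geq c_0 \alpha^R$ for some $c_0 > 0$ and $\alpha = \alpha(h(H), \Delta) > 1$, valid for every $v \in V(H)$ and $R \in \N$; this follows by iterating the Cheeger bound on the nested balls $B_H(v, 0) \subset B_H(v, 1) \subset \cdots$. In particular, the spheres $S_H(v, R) := \{u : d_H(u,v) = R\}$ also satisfy $|S_H(v,R)| \geq c_1 \alpha^R$.

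Next, fix a large scale $R$ and grow $T$ level by level, starting from an arbitrary root $v_0$. The inductive hypothesis at stage $k$ is that I have built a partial tree $T_k$ which is a depth-$k$ binary tree with each tree-edge realized as a geodesic of length $R$ in $H$, and such that for distinct leaves $u, v$ of the current level one has $d_H(u,v) \geq c R \, d_{T_k}(u,v)$ for a uniform $c > 0$. To extend $T_k$ to $T_{k+1}$, at each leaf $v$ I would select two children on $S_H(v, R)$ together with geodesic paths of length $R$ from $v$, both avoiding a fixed bounded neighborhood of $T_k$ and of each other. Because the inductive separation already places most previously built vertices at graph-distance $\gg 2R$ from $v$, only a bounded number of nearby branches contribute "forbidden" vertices to $S_H(v,R)$, and the exponential lower bound on $|S_H(v,R)|$ then guarantees, for $R$ large, that an admissible pair of children exists.

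Once the construction terminates, the upper bound $d_H(u,w) \leq R \, d_T(u,w)$ for $u,w \in V(T)$ is immediate from the construction, and the lower bound $d_H(u,w) \geq c_2 R \, d_T(u,w)$ follows from the separation invariant applied at the lowest common ancestor of $u$ and $w$. Positivity of $h(T)$ is then automatic since $T$ is an infinite binary tree with each edge subdivided into $R$ pieces, whose Cheeger constant is at least $c_3/R$; moreover, collapsing each geodesic realization back to a single edge yields the claimed bilipschitz embedding of the abstract binary tree.

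The main obstacle, and what makes this harder than the hyperbolic case, is the inductive step: in a non-amenable graph that is not hyperbolic, two geodesics emanating from a common vertex may reconverge at large distances, so a purely local choice of "two children in divergent directions" does not automatically propagate to global branch separation. Making the induction close requires simultaneously choosing the children at all leaves of the current level while maintaining the separation invariant, and showing that the exponential sphere volume dominates the combinatorial cost of avoiding the previously constructed tree. I would handle this by a round-robin selection within each level, each step excluding a bounded neighborhood of previously chosen children, and using the sphere lower bound in Step~1 to guarantee a non-empty admissible set at every choice — a careful bookkeeping of these constants (in particular calibrating $R$ to both the growth rate $\alpha$ and the separation constant $c$) is what I expect to be the technically heaviest part of the proof.
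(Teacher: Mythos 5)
This statement is quoted from Benjamini and Schramm (GAFA 1997, Theorem~1.5) without proof in the present paper, so there is no in-paper argument to compare against; your proposal is an attempt to reprove their theorem from scratch, and the route you take is substantially different from theirs. Benjamini and Schramm do not reason metrically about spheres or geodesics at all. Their argument is combinatorial: the Cheeger bound, fed through max-flow/min-cut, supplies a flow of value at least $2$ out of every finite vertex set, and the branching tree is extracted from the support of such a flow. That method never has to cope with geodesics reconverging, which is exactly the obstruction your construction runs into.

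And I think that obstruction is a genuine gap, not just heavy bookkeeping. Your inductive step requires a new leaf $u \in S_H(v,R)$ with $d_H(u,w) \geq cR\,d_T(u,w)$ for \emph{every} already-placed vertex $w$. If $w$ is at tree-distance $j$ from $v$, the inductive hypothesis gives $d_H(v,w) \geq cRj$, possibly with near-equality, and the target $d_H(u,w) \geq cR(j+1)$ forces $u$ to \emph{gain} roughly $cR$ of distance to $w$ by taking $R$ steps from $v$. Since the maximum possible gain is $R$, this pins $u$ into a shrinking ``escape cone'' of directions away from $w$, a different cone for each $w$, and the induction closes only if one point of $S_H(v,R)$ lies in the intersection of all these cones simultaneously. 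Exponential sphere volume gives you no control over that intersection: you can lower-bound the total sphere size, but not the size of the admissible subset, because the forbidden sets coming from different far vertices need not be small relative to the sphere and need not miss one another. In a $\delta$-hyperbolic graph thin triangles \emph{do} supply a common escape direction (moving away from $v$'s parent moves you away from everything beyond it, up to additive $O(\delta)$), which is why a construction of exactly this shape works in the hyperbolic setting; but the theorem is about arbitrary non-amenable graphs, and already in a product like $T_3 \times \mathbb{Z}^n$ the escape cones for vertices lying in opposite $\mathbb{Z}^n$-halves are disjoint, so a greedy or round-robin choice on spheres gets stuck unless you have a mechanism (which you don't) to detect the $T_3$-like directions. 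A secondary issue: your stated invariant controls only pairs of leaves at the current level, and deducing the bilipschitz lower bound for a general pair $u,w$ (with $u$ or $w$ an interior vertex) from it by triangle inequality at the lowest common ancestor loses a factor that depends on how far $u,w$ are from being leaves, so the constant $c_2$ in your conclusion is not actually uniform as written.
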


Let $\T_3$ denote the infinite regular tree of degree $3$. 
All graphs $G$ considered in this paper will have bounded degrees. Whenever $G$ is also non-amenable, we will denote by $\widehat{\alpha}$ the constant obtained from the above
bilipschitz embedding; that is, we will take a map $ f:\T_3 \to G$ and denote by $\widehat{\alpha} \geq 1$ the best constant such that, for all $ x,y \in V(\T_3)$,
\begin{equation}\label{eq:bilipschitz-tree}
\widehat{\alpha}^{-1}d_{\T_3}(x,y)\leq d_G(f(x),f(y))\leq \widehat{\alpha} d_{\T_3}(x,y).
\end{equation}
To simplify the notation, after fixing the map $f$ and the value of $\widehat{\alpha}$ so that \eqref{eq:bilipschitz-tree} is satisfied, 
we identify the vertices of $\T_3$ with their images in $G$ through $f$, 
by saying simply ``take $x\in V(\T_3)$'' instead of ``take $x\in V(G)$ such that $f^{-1}(x) \in V(\T_3)$''.

One of the characteristics of hyperbolic graphs is that whenever there is a bilipschitz embedding such as in \eqref{eq:bilipschitz-tree}, then
for any pair of vertices $x,y \in V(\T_3)$ the image of any geodesic segment (on $\T_3$) connecting $x$ with $y$ via $f$ is ``close'' to some geodesic segment (in $G$) connecting $f(x)$ with $f(y)$.
This is the content of the next result, whose proof can be found in \cite{Ohshika, Papadopoulos}.
The statement that we write here is adapted to our context from \cite[Theorem 2.31]{Ohshika} and {\cite[Corollaire 2.6]{Papadopoulos}.

\begin{Proposition}
\label{prop:ohshika-papad}
Let $X_1$ and $X_2$ be two $\delta$-hyperbolic, bounded-degree graphs, and let $f:X_1\to X_2$ be a bilipschitz embedding with constant $\widehat{\alpha} \geq 1$.
Then there is a constant $\kappa=\kappa(\widehat{\alpha},\delta)$ so that the following holds.
For any pair of vertices $x,y\in V(X_1)$, let $\gamma_{f(x),f(y)} $ be any geodesic in $X_2$ from $f(x)$ to $f(y)$, and $\gamma_{x,y}$ be any geodesic in $X_1$ from $x$ to $y$. 
Let $\tilde\gamma_{x,y}$ be a quasi-geodesic in $X_2$ obtained from the concatenation of the geodesic segments $\gamma_{f(z_1),f(z_2)}$ of $X_2$ for each edge $\{z_1,z_2\}$ in $\gamma_{x,y}$.
Then,  
$$
   \forall z\in \tilde \gamma_{x,y}, \; d_{X_2}(z, \gamma_{f(x),f(y)}) \leq \kappa 
   \quad
   \text{and} 
   \quad
   \forall z\in \gamma_{f(x),f(y)}, \; d_{X_2}(z, \tilde \gamma_{x,y})\leq \kappa.
$$
\end{Proposition}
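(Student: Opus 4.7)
The statement is the standard stability-of-quasi-geodesics result (Morse lemma) in $\delta$-hyperbolic spaces, specialized to the quasi-geodesic obtained by lifting the geodesic $\gamma_{x,y}$ of $X_1$ edge by edge via $f$. My plan is to split the proof into two steps: (i) verify that $\tilde\gamma_{x,y}$ is a genuine quasi-geodesic in $X_2$ with parameters depending only on $\widehat{\alpha}$, and (ii) invoke the Morse lemma in $X_2$ to compare $\tilde\gamma_{x,y}$ with the true geodesic $\gamma_{f(x),f(y)}$.

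For step (i), I parameterize $\tilde\gamma_{x,y}$ by arc length as a path of length $L$ in $X_2$ and pick two points $u=\tilde\gamma_{x,y}(s_u)$ and $v=\tilde\gamma_{x,y}(s_v)$. Say $u$ lies on the sub-segment associated to an edge $\{z_1,z_2\}$ of $\gamma_{x,y}$ and $v$ on the sub-segment associated to an edge $\{w_1,w_2\}$. Each of these sub-segments has length at most $\widehat{\alpha}$ by the upper bound in~\eqref{eq:bilipschitz-tree}, so
\[
|s_u-s_v|\;\leq\; \widehat{\alpha}\,d_{X_1}(z_1,w_1)+3\widehat{\alpha}.
\]
On the other hand, the lower bound in~\eqref{eq:bilipschitz-tree} gives
\[
d_{X_2}(u,v)\;\geq\; d_{X_2}(f(z_1),f(w_1))-2\widehat{\alpha}\;\geq\; \widehat{\alpha}^{-1}d_{X_1}(z_1,w_1)-2\widehat{\alpha}.
\]
Trivially $d_{X_2}(u,v)\leq |s_u-s_v|$, and combining the two displays yields $|s_u-s_v|\leq \widehat{\alpha}^{2}d_{X_2}(u,v)+C(\widehat{\alpha})$. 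Hence $\tilde\gamma_{x,y}$ is a $(\widehat{\alpha}^{2},C(\widehat{\alpha}))$-quasi-geodesic in $X_2$ joining $f(x)$ and $f(y)$.

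For step (ii), the classical Morse lemma in a $\delta$-hyperbolic graph asserts that any $(\lambda,C)$-quasi-geodesic between two points lies within Hausdorff distance $\kappa_{0}(\lambda,C,\delta)$ of any geodesic joining the same endpoints. Setting $\kappa:=\kappa_{0}(\widehat{\alpha}^{2},C(\widehat{\alpha}),\delta)$ produces a constant depending only on $\widehat{\alpha}$ and $\delta$, and delivers both displayed inequalities of the proposition simultaneously.

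The main (and really the only) obstacle is step (ii), the Morse lemma itself. Its proof is classical but non-trivial: one locates the point of the quasi-geodesic that is furthest from the reference geodesic, projects its neighbors onto that geodesic, and iterates the $\delta$-thin triangle property to show that a putative large deviation $D$ would force an arc of $\tilde\gamma_{x,y}$ of length growing exponentially in $D$, in contradiction with the linear upper bound built into the quasi-geodesic property. Since this argument is standard and not specific to FPPHE, I would treat it as a black box and cite~\cite{Ohshika,Papadopoulos}, as the paper already does.
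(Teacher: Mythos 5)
Your proof is correct and follows essentially the same route as the paper, which does not prove this proposition itself but cites \cite{Ohshika,Papadopoulos} for exactly this ``lift is a quasi-geodesic, then apply the Morse lemma'' argument. The only difference is that you spell out step (i), the verification that $\tilde\gamma_{x,y}$ is a $(\widehat{\alpha}^2,C(\widehat{\alpha}))$-quasi-geodesic, which the paper leaves implicit; your estimates there are sound.
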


We point out that since we are dealing with discrete graphs, the constant $\delta$ will always be a non-negative integer.

Any graph $G$ with $h(G)>0$ has exponential growth, and its growth rate is bounded from below by $1+h(G)>1$.
Furthermore, let $1<\Delta<\infty$ denote the maximum degree of $G$.
Then, for all $n>0$ and all vertices $x\in V(G)$, we have
\begin{equation}\label{eq:exp-gwth}
\bigl ( 1+h(G)\bigr )^n \leq \left | B_G(x,n) \right |\leq \Delta^n.
\end{equation}
A crucial property of hyperbolic graphs that we are going to use is that detours from geodesic segments are very long.
\begin{Proposition}\label{prop:gromov-result}
Let $G$ be a $\delta$-hyperbolic graph.
Given a vertex $x_0\in V(G)$ and a value $r>0$, consider the ball $B_G(x_0,r)$ with center $x_0$ and radius $r$.
Now take a geodesic segment $\gamma$ that goes through $x_0$ and has its endpoints $y,z$ outside of $B_G(x_0,r)$.
Then, any path started at $y$ and ended at $z$ which does not intersect $B_G(x_0,r)$ has length bounded from below by $\delta 2^{r/\delta}$.
\end{Proposition}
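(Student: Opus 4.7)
The plan is to prove the contrapositive estimate: if $\eta$ is any path from $y$ to $z$ of length $L$ that avoids $B_G(x_0,r)$, then $x_0$ must lie within distance $O(\delta \log_2 L)$ of $\eta$, which forces $r \lesssim \delta \log_2 L$ and hence $L \gtrsim \delta 2^{r/\delta}$. The mechanism is the standard repeated bisection of $\eta$, combined with the $\delta$-thin triangle property at each scale. Think of $\eta$ as a ``detour arc'' and of $\gamma$ (restricted to the segment of $\gamma$ between $y$ and $z$) as the ``chord''; hyperbolicity says the chord cannot be too far from the arc, and the bound sharpens geometrically under subdivision.

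In more detail, I would set $\eta_0 := \eta$, $p_0 := y$, $q_0 := z$, and inductively perform the following bisection step. At stage $k$ I will have a subpath $\eta_k$ of $\eta$ running between two vertices $p_k,q_k$ on $\eta$ of length at most $L/2^k$, a geodesic $\gamma_k$ in $G$ joining $p_k$ and $q_k$, and a vertex $x_k \in \gamma_k$ with $d_G(x_0, x_k)\le k\delta$. For the inductive step, let $m_k$ be the midpoint of $\eta_k$ and consider the geodesic triangle with vertices $p_k, q_k, m_k$; its sides are $\gamma_k$ and two geodesics $[p_k,m_k]$, $[m_k,q_k]$ in $G$. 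By $\delta$-hyperbolicity, the point $x_k$ on $\gamma_k$ is within $\delta$ of a vertex on one of the other two sides, say $[p_k,m_k]$; call that vertex $x_{k+1}$, and take $\eta_{k+1}$ to be the first half of $\eta_k$, with $p_{k+1}:=p_k$ and $q_{k+1}:=m_k$. By the triangle inequality $d_G(x_0,x_{k+1}) \le (k+1)\delta$, and the length of $\eta_{k+1}$ is at most $L/2^{k+1}$.

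I iterate the bisection for $K := \lceil \log_2 L\rceil$ steps. At that point $\eta_K$ has length at most $1$, so $\gamma_K$ has length at most $1$ as well, and $x_K$ lies within distance $1$ of some vertex $w$ of $\eta$. Combining the bounds yields
\[
d_G(x_0, w) \;\le\; K\delta + 1 \;\le\; \delta\log_2 L + \delta + 1.
\]
Since $\eta$ avoids $B_G(x_0,r)$, we have $d_G(x_0,w) \ge r$, which rearranges to the desired lower bound on $L$ of the form $\delta\, 2^{r/\delta}$ (after absorbing the additive constants, which is where the exact prefactor $\delta$ in the statement enters; one may simply redefine $\delta$ by a bounded factor, or equivalently check the estimate for $r$ larger than a constant, the small-$r$ case being trivial because then $\delta 2^{r/\delta}\le O(\delta)$).

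The only genuinely delicate point is the base case / termination of the recursion: one must be slightly careful that when $\eta_k$ has length $1$, the ``geodesic'' $\gamma_k$ is either a single edge or degenerate, so that $x_k$ is within a uniformly bounded distance of $\eta$. This is handled by stopping the recursion as soon as the subarc has length at most $1$ and paying a single additive constant, which is then swallowed into the prefactor. Everything else is a mechanical application of the $\delta$-thin triangle condition, so I do not anticipate any serious obstacle beyond bookkeeping of constants.
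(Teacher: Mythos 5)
The paper does not prove Proposition~\ref{prop:gromov-result}; it cites Gromov (Sections~6--7 of \emph{Hyperbolic groups}) as the source, so there is no ``paper proof'' to compare against. Your repeated-bisection argument is the standard proof of this exponential-divergence statement and is correct in its essentials: at each stage, $\delta$-thinness of the triangle $\{p_k,q_k,m_k\}$ lets you jump from the chord $\gamma_k$ to one of the two sub-chords while paying $\delta$ in distance and (up to ceilings) halving the corresponding subarc of $\eta$, and once the subarc is a single edge the tracked point $x_K$ lies on $\eta$ itself, giving $d_G(x_0,\eta)\le\delta\lceil\log_2 L\rceil$. The only point worth flagging is the prefactor: as written your argument yields $L\ge 2^{r/\delta-1}$, i.e.\ prefactor $1/2$ rather than $\delta$, and the proposed fix of ``redefining $\delta$ by a bounded factor'' does not cleanly rescue the stated constant, because inflating $\delta$ to $c\delta$ also dilates the exponent from $r/\delta$ to $r/(c\delta)$. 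A cleaner route to a $\delta$-sized prefactor is to terminate the bisection once the subarc has length at most $2\delta$ (so the nearest endpoint of $\gamma_K$, which is on $\eta$, is within $\delta$ of $x_K$), saving about $\log_2(2\delta)$ bisections and yielding $L\ge c'\delta\,2^{r/\delta}$ for an absolute constant $c'$. Even this does not reproduce the literal constant $\delta$, but that is a normalization artifact of how the statement is quoted from Gromov and is immaterial here: every application in the paper (Remark~\ref{rem:new_c''}, Appendix~\ref{sect:proof_prop_detour}, Section~\ref{sect:survival_FPP_lamda}) compares this exponential lower bound against a polynomial quantity, so any fixed prefactor suffices.
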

The proof of this fact can be found in \cite[Sections 6 and 7]{Gromov}, in the more general context of $\delta$-thin, geodesic metric spaces.

\subsection{First-passage percolation and coexistence}\label{sect:fpp-definitions}
We start by defining the \emph{first passage time}, following the notation of \cite[Section 1]{Auffinger}.
For any edge $\{x_0,x_1\}\in E(G)$ take a non-negative random variable $t_{\{x_0,x_1\}}$, which we will refer to as \emph{the passage time} of the edge $\{x_0,x_1\}$.
We assume that the collection $\{t_e\}_{e\in E(G)}$ is i.i.d.\ with a common distribution fixed beforehand.
In this work we will always assume that the common distribution is exponential of mean $1$.
We will later show how FPPHE can be constructed from $\{t_e\}_{e\in E(G)}$.
The random variable $t_{\{x_0,x_1\}}$ can be interpreted as the time needed to cross the edge $\{x_0,x_1\}$.
%
%
For any finite path $\gamma=(x_0, x_1, \ldots , x_n)$ we define the FPP-time of $\gamma$ to be
\begin{equation}\label{eq:def_T(P)}
T(\gamma):=\sum_{i=0}^{n-1} t_{\{x_i,x_{i+1}\}}.
\end{equation}
Moreover, for any pair of vertices $x,y\in V(G)$ we set
\[
T(x\to y):=\inf_{\gamma:x\to y}T(\gamma),
\]
where the infimum is taken over all paths from $x$ to $y$.
Roughly speaking, the quantity $T(x\to y)$ can be seen as the ``shortest time'' needed to go from vertex $x$ to vertex $y$.

A standard interpretation of FPP is that of a \emph{random metric} on $G$, in fact one can define the new distance $d_{\operatorname{FPP}}$ as 
$d_{\operatorname{FPP}}(x,y):=T(x\to y)$.
In this way one can think of FPP as a process ``spreading'' in time, in the sense that at time $T=0$ the set of vertices ``occupied'' by the process consists only of the starting vertex.
Inductively, this set will grow and for all times $T>0$ the set of vertices occupied by the process consists of all vertices at $d_{\operatorname{FPP}}$-distance at most $T$ from the starting vertex.
We point out, however, that by the definition of the model, the (random) metric structure induced by FPP is lost whenever $\lambda\neq 1$, which reminds the behavior of other FPP-based competition models, such as the Richardson model with unequal rates and chase-escape.

\subsubsection*{First-passage percolation spreads linearly in time}
We start with a lemma showing that on a graph with bounded degree, FPP is likely to move linearly in time.
To fix the notation, for $x\in V(G)$ and $T\geq 0$ we let 
$
A^x_T:= \bigl \{y\in V(G) \ : \ T(x\to y)\leq T\bigr \};
$
and for simplicity we set
$
A_T:=A^o_T.
$
In words, for all $T\geq 0$ and each $x\in V(G)$ we have
\[
A^x_T=\{\text{vertices reached by FPP started at }x\text{ and run for time }T\}.
\]
Recall that $B_G(x,L)$ is the ball centered at $x$ of radius $L>0$ with respect to the metric $d_G$.

\nc{c:typical-FPP-cin}
\nc{c:typical-FPP}
\begin{Lemma}\label{lemma:typical-FPP}
Suppose that $G$ has maximum degree $\Delta$, then for any constant $\uc{c:typical-FPP}>0$ there exists a constant $\cout:=\cout(\Delta,\uc{c:typical-FPP})>1$, such that for every $x\in V(G)$ and all $T>0$,
\begin{equation}\label{eq:Eq1}
\P\left [ A^x_T\subseteq B_G(x, \cout T) \right ]\geq 1-e^{-\uc{c:typical-FPP} T}.
\end{equation}
Moreover, for any constant $0<\uc{c:typical-FPP-cin}<1$ there exists a positive constant $\cin:=\cin(\Delta,\uc{c:typical-FPP-cin})<1$, such that for every $x\in V(G)$ and all $T>0$
\begin{equation}\label{eq:Eq2}
\P\left [ B_G(x, \cin T)\subseteq A^x_T \right ]\geq 1-e^{-\uc{c:typical-FPP-cin} T}.
\end{equation}
\end{Lemma}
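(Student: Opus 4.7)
The plan is to prove both inequalities via union bounds combined with Chernoff-type tail estimates for sums of independent $\mathrm{Exp}(1)$ passage times.

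For \eqref{eq:Eq1}, the event $\{A_T^x \not\subseteq B_G(x,\cout T)\}$ implies that some self-avoiding path emanating from $x$, truncated to length $r := \lceil \cout T \rceil$, has total FPP-time at most $T$ (by non-negativity of passage times). Its passage time is $\mathrm{Gamma}(r,1)$-distributed with $\P[S_r \leq T] \leq T^r/r!$, and there are at most $\Delta^r$ such paths. Stirling then gives
$$
\P\bigl[A_T^x \not\subseteq B_G(x,\cout T)\bigr] \;\leq\; \Delta^r\, \frac{T^r}{r!} \;\leq\; \left(\frac{e\Delta T}{r}\right)^{r} \;\leq\; \left(\frac{e\Delta}{\cout}\right)^{r},
$$
where the last inequality uses $T/r \leq 1/\cout$. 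Choosing $\cout$ large enough, as a function of $\Delta$ and $\uc{c:typical-FPP}$, so that $\cout \log(\cout/(e\Delta)) \geq \uc{c:typical-FPP}$ makes this at most $e^{-\uc{c:typical-FPP} T}$ whenever $\cout T \geq 1$. In the remaining regime $\cout T < 1$ one has $B_G(x,\cout T) = \{x\}$, and the failure probability equals $1 - e^{-\deg(x)T} \leq \Delta T$, which is bounded by $e^{-\uc{c:typical-FPP} T}$ throughout $(0, 1/\cout]$ provided $\cout$ is chosen large.

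For \eqref{eq:Eq2}, we union bound over $y \in B_G(x,\cin T)$. For a fixed such $y$, the FPP-time along any geodesic from $x$ to $y$ has distribution $\mathrm{Gamma}(L, 1)$ with $L := d_G(x,y) \leq \cin T$ and dominates $T(x\to y)$. A standard Cramér exponential-moment calculation gives
$$
\P\bigl[T(x\to y) > T\bigr] \;\leq\; e^{-T\psi(L/T)}, \qquad \psi(\alpha) := 1 - \alpha + \alpha\log\alpha,
$$
where $\psi$ is positive and strictly decreasing on $(0,1)$ with $\psi(0^+) = 1$. Since $|B_G(x, \cin T)| \leq \Delta^{\cin T}$ and $\psi(L/T) \geq \psi(\cin)$, the union bound yields
$$
\P\bigl[B_G(x,\cin T) \not\subseteq A_T^x\bigr] \;\leq\; \exp\bigl(T[\cin \log\Delta - \psi(\cin)]\bigr).
$$
Because $\psi(\cin) - \cin\log\Delta \to 1$ as $\cin \to 0^+$, choosing $\cin$ small enough (depending on $\Delta$ and on the prescribed $\uc{c:typical-FPP-cin} < 1$) gives decay at the required rate $\uc{c:typical-FPP-cin}$. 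The sub-unit regime $\cin T < 1$ is vacuous, since then $B_G(x,\cin T) = \{x\} \subseteq A_T^x$ automatically.

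The only mildly delicate point is uniformity in $T$ for \eqref{eq:Eq1}: the path-counting Chernoff bound degenerates when $\cout T < 1$ and one must supplement it with the elementary tail bound on the minimum passage time over edges incident to $x$, as indicated above. Otherwise, everything reduces to standard Cramér-type large deviations in which the entropic factor $\Delta$ is absorbed by taking $\cout$ sufficiently large, respectively $\cin$ sufficiently small.
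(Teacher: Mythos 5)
Your proof is correct and follows essentially the same route as the paper: union bounds over paths emanating from $x$, combined with Chernoff-type tail estimates for $\mathrm{Gamma}(\ell,1)$ sums, with the entropy factor $\Delta^\ell$ absorbed by choosing $\cout$ large (resp.\ $\cin$ small). The only cosmetic differences are that the paper derives the tail bounds via the Poisson–Gamma duality (Lemma \ref{lemma:typical-FPP2}) rather than Stirling/Cramér directly, and for \eqref{eq:Eq2} it unions over all short paths rather than over geodesic endpoints $y$; your explicit treatment of the small-$T$ regime $\cout T < 1$ is a minor but welcome addition not spelled out in the paper.
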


The lemma above follows from the estimate below, which we collect in another lemma since we will need to refer to it later.
\begin{Lemma}\label{lemma:typical-FPP2}
   For any positive integer $\ell$, any $S\leq \ell/2$ and any path $P=(x_0,x_1,\ldots ,x_\ell)$ of length $\ell$ in $G$, we have 
   $
      \P\left [ T(P) \leq  S \right ]\leq 2 \frac{e^{-S}S^\ell}{\ell!}.
   $
   Similarly, for any $T\geq1$, any integer $\ell \leq T$ and any path $P=(x_0,x_1,\ldots ,x_\ell)$ of length $\ell$ in $G$, we have 
   $
      \P\left [ T(P) \geq T \right ]\leq \ell \left(\frac{T e}{\ell}\right)^\ell e^{-T}.
   $
\end{Lemma}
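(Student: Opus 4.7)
Plan: The random variable $T(P)$ is the sum of $\ell$ i.i.d.\ $\text{Exp}(1)$ passage times, hence $T(P)$ is Gamma-distributed with shape $\ell$ and rate $1$. The standard Poisson--Gamma duality (equivalent to the fact that the $k$-th event time of a rate-$1$ Poisson process exceeds $S$ iff the process has fewer than $k$ events by time $S$) gives the two identities
\begin{equation*}
\P[T(P)\le S]=\sum_{k\ge \ell}\frac{e^{-S}S^{k}}{k!},\qquad \P[T(P)\ge S]=\sum_{k=0}^{\ell-1}\frac{e^{-S}S^{k}}{k!}.
\end{equation*}
Both bounds then reduce to elementary manipulations of Poisson probabilities, and the only minor analytic input needed is the weak Stirling inequality $\ell!\ge(\ell/e)^{\ell}$.

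For the lower-tail bound, assume $S\le\ell/2$ and compare consecutive terms of the first series: for every $k\ge\ell$,
\begin{equation*}
\frac{e^{-S}S^{k+1}/(k+1)!}{e^{-S}S^{k}/k!}=\frac{S}{k+1}\le\frac{\ell/2}{\ell+1}\le\frac{1}{2}.
\end{equation*}
Hence the series is dominated by a geometric series with ratio $1/2$ and first term $e^{-S}S^{\ell}/\ell!$, giving the claimed bound $\P[T(P)\le S]\le 2\,e^{-S}S^{\ell}/\ell!$.

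For the upper-tail bound, assume $\ell\le S$. The ratio of consecutive terms in the second series is $S/(k+1)\ge S/\ell\ge1$ for $0\le k\le\ell-1$, so the terms are non-decreasing in $k$; bounding all $\ell$ terms by the last one yields
\begin{equation*}
\P[T(P)\ge S]\le \ell\cdot\frac{e^{-S}S^{\ell-1}}{(\ell-1)!}.
\end{equation*}
Now $(\ell-1)!=\ell!/\ell\ge \ell^{\ell-1}/e^{\ell}$ by Stirling, which gives $S^{\ell-1}/(\ell-1)!\le e\cdot(Se/\ell)^{\ell-1}$. Using finally that $Se/\ell\ge e$ (since $S\ge\ell$) absorbs the extra factor $e$ into one more copy of $Se/\ell$, producing the desired estimate $\ell\,(Se/\ell)^{\ell}e^{-S}$.

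There is no serious obstacle here; the only thing to watch is that the Stirling step fits exactly the regime $\ell\le S$, so the extra factor $e$ coming from Stirling is swallowed precisely by the hypothesis $S\ge\ell\ge1$. Both estimates are standard Poisson tail bounds dressed up for the Gamma; the slack factors $2$ and $\ell$ in the statement reflect the fact that the authors prefer the transparent term-by-term comparison over a Chernoff optimization.
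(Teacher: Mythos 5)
Your proof is correct and follows essentially the same route as the paper: both use the Poisson--Gamma duality $\P[T(P)\le S]=\P[\Poi(S)\ge\ell]$, bound the lower-tail series by a geometric series with ratio at most $1/2$ (using $S\le\ell/2$), and bound the upper-tail series term by term followed by Stirling's inequality $\ell!\ge(\ell/e)^\ell$. The only cosmetic difference is that you bound the largest summand as $S^{\ell-1}/(\ell-1)!$ and then absorb the leftover factor of $e$ using $S\ge\ell$, while the paper bounds directly by $S^\ell/\ell!$; both yield the stated estimate.
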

\begin{proof}
   Note that
   $
      \P \left [T(P) \leq S \right ]
      = \P\left [ \Poi ( S)\geq \ell \right ],
   $
   where $\Poi ( S)$ denotes a Poisson random variable with parameter $S$.
   This is due to the following. The first-passage time $T(P)$ of a path $P$ is a sum of $\ell$ i.i.d.\ exponential random variables with mean $1$.
   Thus, to demand that a sum of $\ell$ i.i.d.\ random variables distributed as Exp$(1)$ is at most $S$, 
   is equivalent to demand that within a time-interval of length $S$, one has witnessed at least $\ell$ observations of a Poisson process of rate $1$.
   This is equivalent to ask that a Poisson random variable with parameter $S$ is at least $\ell$.
   Hence,
   \[
      \P \left [T(P) \leq S \right ]
      \leq \sum_{k\geq \ell} \frac{e^{- S} \, S^k}{k!}
      \leq 2 \frac{e^{- S} \, S^\ell}{\ell!},
   \]
   where in the last step we use the fact that $\ell \geq 2S$.
   The second part of the lemma is analogous:
   \[
      \P \left [ T(P)\geq  T \right ]
      \leq \P\left [ \Poi ( T)\leq \ell \right ] 
      \leq \sum_{k\leq \ell} \frac{e^{- T}\, T^k}{k!}.
   \]
   For all $k \leq \ell \leq T$ the quantity $ \frac{ T^k}{k!}$ is increasing in $k$, then for all values $\ell \leq T$ we have
   $$
      \P \left [ T(P)\geq  T \right ]
      \leq 
      e^{-T}\sum_{k\leq \ell} \frac{ T^k}{k!}
      \leq e^{-T} \ell\frac{ T^\ell}{\ell!} 
      \leq e^{-T} \ell \left(\frac{Te}{\ell}\right)^{\ell},
   $$
   thus finishing the proof.
\end{proof}

\begin{proof}[Proof of Lemma~\ref{lemma:typical-FPP}]
   We start with the first part of the statement.
   Observe that 
   \[
   \P\left [  A^x_T\subseteq B_G(x, \cout T) \right ]  = \P \left [\sup_{w\in A^x_T}d_G(x,w)\leq \cout T \right ] 
   \geq 1- \P \left [\sup_{w\in A^x_T}d_G(x,w)> \cout T \right ].
   \]
   Recalling that, for any path $P_\ell=(x_0,x_1,\ldots ,x_\ell)$ of length $\ell\geq 1$, 
   $T(P_\ell) $ denotes the FPP time of $P_\ell$ as defined in \eqref{eq:def_T(P)}. 
   Using that 
   $\P \left [\sup_{w\in A^x_T}d_G(x,w)> \cout T \right ]$ is the probability that 
   there exists a path started at $x$ of length larger than $\cout T$  with FPP-time at most $T$,
   and applying Lemma~\ref{lemma:typical-FPP2}, we get that for all $\uc{c:typical-FPP}$ large enough we can choose $\cout$ such that  
   \[
   \P \left [\sup_{w\in A^x_T}d_G(x,w)> \cout T \right ]
   \leq \sum_{\ell> \cout T}\, \sum_{P_\ell \text{ started at }x} \P \left [ T(P_\ell)\leq  T \right ]
   \leq \sum_{\ell> \cout T}\, \sum_{P_\ell \text{ started at }x} e^{-2 \uc{c:typical-FPP}\ell}.
   \]
   Then we observe that the number of paths of length $\ell\geq 1$ started at a fixed vertex $x$ is at most $\Delta^\ell$, by the bounded degree assumption.
   Thus,
   we take $\uc{c:typical-FPP}$ large enough so that, for all $\ell \geq \cout T$, we have
   $
   \P \left [\sup_{w\in A^x_T}d_G(x,w)> \cout T \right ]\leq 
   \sum_{\ell> \cout T}\Delta^\ell e^{-2\uc{c:typical-FPP} \ell}
   \leq  e^{-\uc{c:typical-FPP} \ell}.
   $
   
   To show the second part of the statement we proceed analogously:
   \[
   \P\left [ B_G(x, \cin T)\subseteq A^x_T \right ] = \P \left [ \inf_{w\notin A^x_T}d_G(x,w)> \cin T\right ]
    \geq 1- \P \left [ \inf_{w \notin A^x_T}d_G(x,w) \leq \cin T\right ].
   \]
   Then, $\P \left [ \inf_{w\notin A^x_T}d_G(x,w) \leq \cin T\right ] \leq \P\left [\exists \text{ a path of length }\leq \cin T \text{ with FPP-time}\geq  T\right ]$, and 
   \[
   \P \left [ \inf_{w\notin A^x_T}d_G(x,w) \leq \cin T\right ] 
    \leq \sum_{\ell \leq \cin T}\, \sum_{P_\ell \text{ started at }x} \P \left [ T(P_\ell)\geq  T \right ]
    \leq \sum_{\ell \leq \cin T}\, \sum_{P_\ell \text{ started at }x} e^{-T} \ell \left(\frac{Te}{\ell}\right)^\ell.
   \]
   Therefore, we obtain
   \[
   \P \left [ \inf_{w\notin A^x_T}d_G(x,w) \leq \cin T\right ] 
   \leq \sum_{\ell \leq \cin T} e^{-T}\ell \left(\frac{ \Delta T e}{\ell}\right)^\ell
   \leq (\cin T)^2e^{-T} \left(\frac{ \Delta T e}{\cin T}\right)^{\cin T},
   \]
   where in the last step we used that 
   $\ell \leq T$.
   The above can be expressed as
   \[
   (\cin T)^2e^{-T} \frac{ (\Delta T e)^{\cin T}}{(\cin T)^{\cin T}} = \exp \left \{ 2\ln (\cin T)+\cin T \ln \left (\frac{\Delta}{\cin}\right ) +(-1+\cin) T \right \},
   \]
   which implies that by taking $\cin=\cin(\Delta, \uc{c:typical-FPP-cin})<1$ small enough, we have
   $
   2\ln (\cin T)+\cin T \ln \left (\frac{\Delta}{\cin}\right ) +(-1+\cin) T < -\uc{c:typical-FPP-cin}T,
   $
   concluding the proof of the lemma.
\end{proof}
\begin{Remark}\label{rem:FPPs}
   In the proof of Lemma \ref{lemma:typical-FPP} we used that the distribution of each $t_e$ is exponential with mean $1$.
   However, the very same proof can be repeated if the $\{t_e\}_{e\in E(G)}$ are i.i.d.\ exponential random variables with rate $\lambda$.
   In fact, in this case, note that $\{\lambda t_e\}_{e\in E(G)}$ are exponential random variables of rate $1$, and hence,
    \eqref{eq:Eq1} and \eqref{eq:Eq2} could be simply replaced by
   \[
   \P\left [ A^x_{T/\lambda}\subseteq B_G\left (x, \cout T\right ) \right ]\geq 1-e^{-\uc{c:typical-FPP} T}
   \quad\text{and}\quad
   \P\left [ B_G\left (x, \cin T\right )\subseteq A^x_{T/\lambda} \right ]\geq 1-e^{-\uc{c:typical-FPP-cin} T},
   \]
   respectively. Another way would be to write
   \[
   \P\left [ A^x_{T}\subseteq B_G\left (x, \cout \lambda T\right ) \right ]\geq 1-e^{-\uc{c:typical-FPP} \lambda T}
   \quad\text{and}\quad
   \P\left [ B_G\left (x, \cin \lambda T\right )\subseteq A^x_{T} \right ]\geq 1-e^{-\uc{c:typical-FPP-cin} \lambda T}.
   \]
   %
   %
   %
\end{Remark}

\begin{Remark}
   Note that~\eqref{eq:Eq1} is stronger than~\eqref{eq:Eq2} 
   in the sense that $\uc{c:typical-FPP}$ can be arbitrarily large, while $\uc{c:typical-FPP-cin}$ is restricted to be smaller than $1$. 
   The reason for this is that it is much easier to violate the event $\{B_G(x,\cin T) \subseteq A_T^x\}$ 
   in~\eqref{eq:Eq2}, 
   since for this it is enough that all edges adjacent to $x$ have a large passage time. 
   On the other hand, to violate the event  $\{A_T^x \subset B_G(x, \cout T)\}$ 
   from~\eqref{eq:Eq1} 
   one needs to have several edges with a small passage time. 
\end{Remark}

\subsection{Construction of FPPHE from $\{t_e\}_{e\in E(G)}$.}\label{sec:constrfpphe}
Let $\{$seeds$\}$ denote the set of seeds of $\Fl$; so $\{$seeds$\}$ is the subset of $V(G)\setminus\{o\}$ obtained by adding each vertex independently with probability $\mu$.
We start with a collection of independent random variables $\{t_e\}_{e\in E(G)}$, with common distribution  Exp($1$), and a collection of \emph{seeds}.
Then $\Fo$ uses the passage times given by $\{t_e\}_{e\in E(G)}$ to spread from $o$, activating seeds whenever it tries to occupy vertices already occupied by a seed.
Activated seeds of $\Fl$ spread using the passage times $\{ t_e/\lambda\}_{e \in E(G)}$.
In other words, if $x\in V(G)$ gets occupied by $\Fl$ at time $t$, and a neighbor $y$ of $x$ is not occupied, then $y$ gets occupied by $\Fl$ at time $t+t_{\{x,y\}}/\lambda$ unless it gets occupied through another edge first.

\section{Construction of the embedded tree $\mathbb{T}$}\label{sect:tree}
Recall that $\T_3$ denotes the embedded tree found in Theorem \ref{thm:embedding}.
When $G$ is a vertex-transitive graph we can safely assume that $o\in V(\T_3)$, but for general graphs this might not be the case.
In order to solve this problem, we consider an ``augmented'' version of $\T_3$ in the following sense.
We take any $d_G$-geodesic path between the origin $o$ and the set $V(\T_3)$, and look at it as if it was a (unique) finite extra branch of $\T_3$.
More precisely, let $o'\in V(\T_3)$ be such that the following relation is satisfied
\[
d_G(o,o')=\min_{v\in V(\T_3) } \{d_G(o,v)\}.
\]
If the above holds for more than one such $o'$ we just fix one arbitrarily.

Now we proceed to the construction of another embedded tree $\T$ starting from $\T_3$.
Recall that $\widehat{\alpha}\geq 1$ is the bilipschitz constant appearing in \eqref{eq:bilipschitz-tree}, and recall Theorem \ref{thm:embedding}.
Let $\gamma_{o,o'} $ denote a $d_G$-geodesic between $o$ and $o'$. 
We now define a tree $\T_3\cup \gamma_{o,o'}$, whose vertex set is $V(\T_3)\cup \gamma_{o,o'}$, and 
for every $u\in \gamma_{o,o'} $ and $x\in V(\T_3)\cup \gamma_{o,o'}$ we have 
\begin{equation}\label{eq:d_T3}
d_{\T_3\cup\gamma_{o,o'}}(u,x):=\left \{ 
\begin{array}{ll}
d_G(u,o')+d_{\T_3}(o',x ) & \text{ if }x\in V(\T_3)\\
d_G(u,x) & \text{ if } x\in \gamma_{o,o'}.
\end{array}
\right .
\end{equation}
It is straightforward to verify that this notation is well defined and there is a bilipschitz embedding of $\T_3\cup \gamma_{o,o'}$ into $G$ with the same constant $\widehat{\alpha}$ that we had before.

We now start the construction of the embedded tree $\T\subset\T_3\cup \gamma_{o,o'} $.
First set the root of $\T$ to be $o$.
\nr{r:o-o'}
Now let  
\[
\ur{r:o-o'}:=\ur{r:o-o'}(G, \widehat{\alpha}):=\lceil 2\widehat{\alpha} d_G(o,o')\rceil,
\]
and fix a large integer $\r>0$ (which will be specified later on) such that $\r\geq \ur{r:o-o'}$.
Note that, from this choice, we have that $d_G(o,o')\leq \r/2$.
\begin{Remark}
In the following we will deal with a sequence of values $\ur{r:o-o'}, \ldots,\ur{r:another-1}$ all of them are ``large enough'' so that several constraints are satisfied.
For our proof to work it suffices to choose a value 
$\r \geq \max \{ \ur{r:o-o'}, \ldots,\ur{r:another-1}\},$
and leave it fixed throughout.
\end{Remark}
Note that if $G$ is transitive then we should go through the same construction with the only simplification that $o'\equiv o$.

Consider two vertices $w,z\in V(\T_3)$ such that the following occur:
\[
d_{\T_3}(o',w)=d_{\T_3}(o',z)=\r, \quad \text{and}\quad d_{\T_3}(w,z)=2\r.
\]
(Since $o'$ is now the \emph{root} of $\T_3$, we are picking $w$ and $z$ to be two arbitrary vertices in generation $\r$ of $\T_3$.)
Note that their images via $f$ will be vertices at $d_G$-distance $ r\in [\widehat{\alpha}^{-1}\r, \widehat{\alpha} \r]$ from the $o'$.
We add $w$ and $z$ to $\T$ as children of $o$ so that 
the two paths joining $o$ with $w$ and $o$ with $z$ on $\T_3\cup \gamma_{o,o'}$ correspond to \emph{edges} of $\T$. We say that $w$ and $z$ together form the ``first generation'' of $\T$.
Inductively, suppose that we have defined $\T$ up to generation $k$, for some $k\geq 1$ and denote the vertices at generation $k$ by $u_1, \ldots u_{2^k}$.
Then, for each such $u_\ell$, $\ell\in \{1, \ldots 2^k\}$, 
consider two vertices $w_\ell,z_\ell$ of $\T_3$ such that all the following occur:
\begin{itemize}
\item[(i)] $d_{\T_3}(o',w_\ell)=d_{\T_3}(o',z_\ell )=(k+1)\r$, 
\item[(ii)] $d_{\T_3}(w_\ell,z_\ell)=2\r$, 
\item[(iii)] $d_{\T_3}(u_\ell,w_\ell)=d_{\T_3}(u_\ell,z_\ell )=\r$.
\end{itemize}
The set of vertices $\{w_1,z_1, \ldots , w_{2^k},z_{2^k}\}$ form the $(k+1)$-th generation of $\T$.
By proceeding inductively in this way we obtain a (rooted) binary tree $\T$ embedded in $G$ whose edges are \emph{almost} geodesic segments, in the sense that their length corresponds to the distance of the two endpoints in $G$, up to a multiplicative constant.
More precisely, for any two vertices $u,v\in V(\T)$ we say that $u,v$   are $\T$-neighbors whenever $u$ and $v$ are neighboring vertices in $\T$, that is, $d_{\T}(u,v)=1$.
From now on, by ``\emph{the embedded tree}'' we will always mean $\T$, unless otherwise stated.
\begin{Lemma}\label{lemma:T-bilip-emb}
For $\T$ defined as above there is a bilipschitz embedding of $\T$ into $G$.
\end{Lemma}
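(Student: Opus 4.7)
The plan is to observe that the desired embedding is just the restriction to $V(\T)$ of the bilipschitz map $f:\T_3\cup\gamma_{o,o'}\to G$ with constant $\widehat\alpha$, whose existence is asserted in the paragraph following \eqref{eq:d_T3}. The only non-trivial point is to compare the intrinsic tree metric $d_\T$ with the host metric $d_{\T_3\cup\gamma_{o,o'}}$ on the vertex set $V(\T)$. Once that comparison is in hand, the bilipschitz bound \eqref{eq:bilipschitz-tree}, applied to $\T_3\cup\gamma_{o,o'}$, immediately yields the conclusion.

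Concretely, my first step is to prove the following claim: for any pair $u,v\in V(\T)$ with $d_\T(u,v)=k$,
$$k\r\;\leq\;d_{\T_3\cup\gamma_{o,o'}}(u,v)\;\leq\;k\r+\tfrac{\r}{2}.$$
To see this, I exploit the fact that both $\T_3$ and $\T_3\cup\gamma_{o,o'}$ are trees, so the triangle inequality is saturated exactly when the three vertices involved lie on a common geodesic. Conditions (i) and (iii) of the construction give $d_{\T_3}(o',w_\ell)=(k+1)\r=d_{\T_3}(o',u_\ell)+d_{\T_3}(u_\ell,w_\ell)$, forcing $u_\ell$ to sit on the $\T_3$-geodesic from $o'$ to each $\T$-child $w_\ell$ of $u_\ell$. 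Condition (ii) gives $d_{\T_3}(w_\ell,z_\ell)=2\r=d_{\T_3}(w_\ell,u_\ell)+d_{\T_3}(u_\ell,z_\ell)$, forcing $u_\ell$ to be the $\T_3$-common-ancestor of its two $\T$-children. Iterating downward from the root, the $\T_3$-geodesic between any two vertices $u,v\in V(\T)\setminus\{o\}$ must pass through their least common $\T$-ancestor $p$ and has $\T_3$-length exactly $\bigl(d_\T(u,p)+d_\T(p,v)\bigr)\r=k\r$. The only correction arises when $o$ is one of the endpoints: then the $(\T_3\cup\gamma_{o,o'})$-geodesic additionally traverses $\gamma_{o,o'}$, contributing at most $d_G(o,o')\leq\r/2$ by the choice $\r\geq\ur{r:o-o'}$.

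The second step is to combine the claim with the bilipschitz embedding of $\T_3\cup\gamma_{o,o'}$ into $G$, which yields
$$\widehat\alpha^{-1}\r\cdot d_\T(u,v)\;\leq\;\widehat\alpha^{-1}\,k\r\;\leq\;d_G\bigl(f(u),f(v)\bigr)\;\leq\;\widehat\alpha\bigl(k\r+\tfrac{\r}{2}\bigr)\;\leq\;\tfrac{3}{2}\widehat\alpha\r\cdot d_\T(u,v).$$
Taking $\tfrac{3}{2}\widehat\alpha\r$ as the bilipschitz constant of $f|_{V(\T)}$ (which dominates $\widehat\alpha/\r$ since $\r\geq 1$) completes the proof.

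I do not anticipate a genuine obstacle: the argument is entirely combinatorial and just reads off $\T_3$-distances between $\T$-vertices from conditions (i)--(iii). The only step requiring mild care is the special $\T$-edge incident to the root $o$, whose length in $\T_3\cup\gamma_{o,o'}$ may exceed $\r$ by up to $d_G(o,o')\leq\r/2$; this is precisely the reason the constraint $\r\geq\ur{r:o-o'}$ was built into the construction, so that the ``shifted root'' does not spoil the multiplicative scaling.
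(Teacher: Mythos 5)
Your proof takes essentially the same approach as the paper's: both reduce the claim to (a) the combinatorial identity $d_{\T_3}(u,v) = \r\, d_\T(u,v)$ for $u,v\in V(\T)\cap V(\T_3)$, and (b) the bilipschitz comparability of $\T_3\cup\gamma_{o,o'}$ with $G$. Your verification of (a) from conditions (i)--(iii) is correct and is in fact more explicit than the paper, which uses the identity without comment. Where you differ is in how you handle (b): you outsource it wholesale to the paper's remark that ``there is a bilipschitz embedding of $\T_3\cup\gamma_{o,o'}$ into $G$ with the same constant $\widehat{\alpha}$,'' whereas the paper's own proof does not use that remark and instead re-derives the lower bound from scratch in the root case.

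That remark is actually the soft spot. The lower bilipschitz inequality $\widehat{\alpha}^{-1}\bigl(d_G(u,o')+d_{\T_3}(o',x)\bigr)\leq d_G(u,x)$ is \emph{not} true with the constant $\widehat{\alpha}$: taking $u=o$ and comparing with $d_G(o,x)\geq d_G(o',x)-d_G(o,o')\geq \widehat{\alpha}^{-1}d_{\T_3}(o',x)-d_G(o,o')$ leaves a deficit of $(1+\widehat{\alpha}^{-1})\,d_G(o,o')>0$ whenever $o\neq o'$, and no further triangle-inequality bound closes it. This is precisely why the paper's proof treats case (i) ($x=o$) separately: it uses the sharper $d_G(o,o')\leq \r/(2\widehat{\alpha})$ (from $\r\geq\ur{r:o-o'}$, not merely $d_G(o,o')\leq\r/2$) to show $d_G(o,o')/d_G(o',y)\leq 1/2$ and thereby obtains the weaker but correct lower bound $d_G(o,y)\geq \frac{\r}{2\widehat{\alpha}}\,d_\T(o,y)$. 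Your argument is salvageable --- the restriction of $f$ to $V(\T)$ is bilipschitz for $\T_3\cup\gamma_{o,o'}$ with some constant (e.g.\ $2\widehat{\alpha}$ following the paper's estimate), and the lemma claims no specific constant --- but as written the chain of inequalities in your last display does not hold for pairs involving $o$, and you should insert the paper's case-(i) computation (or a correct constant) rather than cite the informal remark verbatim.
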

\begin{proof}
From the definition of $\r$ it follows that for any pair of distinct vertices $x,y\in V(\T)$ we have the following two possibilities:
\begin{itemize}
\item[(i)] $x= \{o\}$, and $y \in V(\T_3)\setminus \{o'\}$;
\item[(ii)] $x, y \in V(\T_3)$.
\end{itemize}
We start by showing that there is a constant $\alpha:=\alpha(G)\geq  \widehat{\alpha}\geq 1$ such that 
$
d_G(x,y)\leq \alpha\r  d_{\T}(x,y),
$
for all pairs $x,y\in V(\T)$.
In case (i) we have
\[
d_G(x,y) = d_G(o,y) \leq \frac{\r}{2}+\widehat{\alpha} d_{\T_3}(o',y )= \frac{\r}{2}+\widehat{\alpha}\r d_{\T}(o,y ) \leq \left(\widehat{\alpha}+\frac{1}{2}\right)\r d_{\T}(o,y).
\]
Analogously, in case (ii) we have
\[
d_G(x,y)\leq \widehat{\alpha}  d_{\T_3}(x,y )\leq \r \widehat{\alpha} d_{\T}(x,y ).
\]
Now we proceed to show the reversed inequality, that is, there is $\alpha>\widehat{\alpha}$ such that 
$
d_G(x,y)\geq \alpha^{-1}\r  d_{\T}(x,y),
$
for all pairs $x,y \in V(\T)$.
We start with case (i) as above.
\[
d_G(x,y)  = d_G(o,y) \geq \left | d_G(o,o')-d_G(o',y)\right |.
\]
From the definition of $\r$ it follows that
\[
\left | d_G(o,o')-d_G(o',y)\right |=d_G(o',y)-d_G(o,o')=d_G(o',y)\left ( 1-\frac{d_G(o,o')}{d_G(o',y)}\right ).
\]
Using the definitions of $\T$ and $\r$ we deduce that
\[
d_G(o,o')\frac{1}{d_G(o',y)}\leq \left (\frac{\r}{2\widehat{\alpha} }\right )\frac{1}{\widehat{\alpha}^{-1} d_{\T_3}(o',y)}\leq \frac{\r}{2 }\frac{1}{\r d_{\T}(o',y)}\leq \frac{1}{2 }.
\]
Therefore we have
\[
d_G(o,y) 
 \geq d_G(o',y)-d_G(o,o')\geq d_G(o',y)\left ( 1-\frac{1}{2}\right )
 \geq \widehat{\alpha}^{-1}d_{\T_3}(o',y)\frac{1}{2}\geq \frac{\r}{2\widehat{\alpha}}d_{\T}(o,y).
\]
Case (ii) is a straightforward consequence of the fact that $\T_3$ is a bilipschitz embedding, in fact
$
d_G(x,y)\geq \widehat{\alpha}^{-1}d_{\T_3}(x,y)\geq \widehat{\alpha}^{-1}\r d_{\T}(x,y).
$
\end{proof}

\section{Construction of multiscale and good cylinders}\label{sect:scales}
In this section we set up the definitions and methods for an inductive argument that will be carried on to prove Theorem \ref{thm:survival_FPP_1}.
In order to proceed, we need to introduce some more notation, including the concept of \emph{cylinder}, which will be used to prove Theorem \ref{thm:survival_FPP_1}.

As before, for any two vertices $x,y \in V(G)$, let $\Gamma_{x,y}$ denote the \emph{set} of geodesics (with respect to the graph distance $d_G$) that connect $x$ to $y$ on $G$.
Note that this set might have more than one element because we are not assuming that geodesics are unique.
\begin{Definition}[Cylinder]\label{def:cylinder}
For any two vertices $x,y \in V(G)$ and constant $L\geq 1$ we define the \emph{cylinder} $\cyl^{(L)}_{x,y}$ as the union of all balls (with respect to the metric $d_G$) of radius $L$ centered at vertices on any geodesic connecting $x$ with $y$.
More precisely, we set
\[
\cyl^{(L)}_{x,y}:= \bigcup_{\gamma\in \Gamma_{x,y} } \bigcup_{w\in \gamma } B_G(w, L).
\]
\end{Definition}
A straightforward application of $\delta$-hyperbolicity implies that for any $x,y\in V(G)$, for any two geodesics $\gamma_1$ and $\gamma_2$ in $\Gamma_{x,y}$ we have
\begin{equation}\label{eq:2geod}
\sup_{u\in \gamma_1} \inf_{v\in \gamma_2} d_G(u,v)\leq \delta.
\end{equation}

\begin{Proposition}\label{prop:detour_cylinders}
   Consider two vertices $x,y\in V(G)$ such that $d_G(x,y)\geq  50 \delta$,
   and take any geodesic $\gamma_{x,y}$ between $x$ and $y$.
   Set $d:=d_G(x,y)$, and fix an integer value $L$ such that $9\delta\leq L\leq d/5$. 
   Moreover, fix two distinct vertices $u,v$ on $\gamma_{x,y}$ such that both $d_G(x,u)$ and $d_G(y,v)$ are at least $L+1$ and $ d_G(u,v)\geq 2.5 L$.
   Then any path started at $x$ and ended at $y$ that avoids the cylinder $\cyl^{(L)}_{u,v}$ has length bounded from below by
   $
   \frac{1}{3L}\delta d_G(u,v)\cdot 2^{L/(2\delta)} .
   $
\end{Proposition}
The above result is just an extension of the fact that detours away from geodesics are exponentially large (cf.\ Proposition~\ref{prop:gromov-result}); we defer the proof to Appendix~\ref{sect:proof_prop_detour}.
\begin{Remark}\label{rem:new_c''}
We emphasize that if the path under consideration starts from somewhere in $B_G(x, L)$ and ends somewhere in $B_G(y,L)$ (instead of starting at vertex $x$ and ending at vertex $y$), then it suffices to modify the result by a multiplicative constant.
In fact, by excluding the two balls $B_G(x, L) $ and $B_G(y, L)$, we would obtain a new lower bound 
\begin{equation}\label{eq:lwer_bound_prop}
\frac{1}{3L}\delta d_G(u,v)\cdot 2^{L/(2\delta)} -2L\geq \frac{1}{3L}\delta [d_G(u,v)-2L]\cdot 2^{L/(2\delta)} \geq \frac{1}{12L}\delta d_G(u,v)\cdot 2^{L/(2\delta)},
\end{equation}
where the last inequality follows because $d_G(u,v)\geq 2.5 L$ and  $L\leq d/5$.
\end{Remark}

\subsection{First Scale (Scale $1$)}\label{sect:scale_1}
Recall that we are dealing with two different FPP processes, $\Fo$ and $\Fl$ described in the Introduction,
and that by ``\emph{seeds}'' we mean the starting points of $\Fl$.
Recall also that $T(P)$ is the passage time of a path $P$ with respect to the passage times $\{t_e\}_{e\in E(G)}$, which are of rate $1$ and are used in the construction of FPPHE, and that 
$A_t^w$ is the ball of radius $t$ centered at $w\in V(G)$ according to (the random metric induced by) $t_e$, $e\in E(G)$. We then define 
\begin{equation}\label{eq:Awmax}
   A_t^{w,\max} := \text{ ball of radius $t$ centered at $w$ according to $\max\{1,\lambda\}t_e$, $e\in E(G)$}
\end{equation}
and
\begin{equation}\label{eq:Awmin}
   A_t^{w,\min} := \text{ ball of radius $t$ centered at $w$ according to $\min\{1,\lambda\}t_e$, $e\in E(G)$}.
\end{equation}

Choose a constant $\varepsilon>0$ arbitrarily small which will be kept fixed throughout, and recall that $\r\geq 1$ is a fixed integer used in the definition of $\T$. 
We will take $\r$ to be large enough with respect to $\varepsilon$. 
Now, for $w\in V(G)$ let
\[
\mathscr{P}_w:=\{\text{all self-avoiding and finite }G\text{-paths starting from }w\},
\]
and for $P\in \mathscr{P}_w$ let $|P|$ denote its $d_G$-length.

Consider any pair $x,y \in V(\T)$ and denote by $\j :=d_{\T}(x,y)$.
By the bilipschitz embedding, this implies (cf.\ Lemma \ref{lemma:T-bilip-emb}) that there is a value $\alpha\geq \widehat{\alpha}\geq 1$ such that
$
d_G(x,y)\in [\alpha^{-1} \j \r , \alpha \j \r].
$
Recall the constants $\cin $ and $\cout$ from Lemma \ref{lemma:typical-FPP}, and define the event
\begin{equation}\label{eq:def-G-hat}
{\mathcal{G}}_1 (x,y;\j\r):= 
\left \{
\begin{split}
& \forall t\in \left [ \varepsilon^{1/2} \j\r , \  4\frac{\cout}{\cin^2}\j\r\right ]\cap \mathbb{Z}, \text{ and }  \forall w\in \cyl^{(\varepsilon \j\r)}_{x,y}  \text{ we have}\\
a) & \, B_G(w, \min \{1, \lambda\}\cin t)\subseteq 
A_t^{w,\min} \subseteq  A_t^{w,\max} \subseteq  \ B_G(w, \max \{1, \lambda\}\cout t) \\
b) & \, \forall \, P\in \mathscr{P}_w \text{ with }\sqrt{\varepsilon} \cout \j\r\leq |P|\leq 4\frac{\cout^2}{\cin^2}\j\r, \text{ we have }\frac{|P|}{\cout}\leq T(P)
\end{split}
\right \}.
\end{equation}
Note that the dependence on $\j\r$ is written in order to avoid confusion, but the notation is redundant as this quantity is fixed once we fix the pair $x$ and $y$.

For any pair $x,y\in V(\T)$ of $\T$-neighboring vertices, paths satisfying condition $b)$ described in $\mathcal{G}_1(x,y;\r)$ for some initial vertex $w\in \cyl^{(\varepsilon \r)}_{x,y} $ will be called \emph{typical} at scale 1.
\begin{Definition}[Good cylinders, scale $1$]
For $\cin $ and $\cout$ as in Lemma \ref{lemma:typical-FPP} and for any pair of $\T$-neighboring vertices $x,y$ we define the \emph{cylinder} $\cyl^{(\varepsilon \r)}_{x,y} $ to be \emph{good at scale} $1$ if the following two conditions are satisfied:
\begin{itemize}
\item[(i)] 
The first condition is that the event $\mathcal{G}_1(x,y;\r)$ holds cf.~\eqref{eq:def-G-hat} with $\j=1$.
\item[(ii)] 
Choose a constant $\beta$ so that
\begin{equation}\label{eq:def-BETA}
\beta:= (6+\varepsilon)(1+\alpha)\alpha^2\frac{\cout}{\cin}\max\{\lambda, \lambda^{-1}\}.
\end{equation} 
Then the second requirement is that 
the event $\mathcal{G}_2(x,y)$ below holds:
\[
\mathcal{G}_2(x,y):= \left \{ \cyl^{(\varepsilon \r+ \beta\r )}_{x,y}  \ \cap \ \{\text{seeds}\}=\emptyset\right \} .
\]
\end{itemize}
\end{Definition}
In words, condition (i) requires that for all integer times $t$ in the interval $\left [ \varepsilon^{1/2} \r , 4\frac{\cout}{\cin^2}\r\right ]$, for each vertex $w$ of the cylinder the set of vertices reached by 
a FPP process of rate $\lambda$ or $1$ started at $w$ contains a ball of radius $\min\{1, \lambda\}\cin t$ and is contained inside a ball of radius $\max\{1, \lambda\}\cout t$.
Furthermore, the passage time $T(P)$ of any path $P$ is not too short. Note that a path $P$ has an expected passage time of $|P|$, and we use the factor $1/\cout$ to get the event to hold with high probability for 
all such paths. Part $b)$ of event $\mathcal{G}_1(x,y;\r)$ will be used to show that long detours away from geodesics cannot happen, in particular, 
if $\Fo$ tries to deviate from $\gamma_{x,y}$ inside a good cylinder $\cyl_{x,y}^{(\varepsilon \r)}$, 
the time to traverse such detour cannot be too small. 


The choice of the radius of the cylinder in $\mathcal{G}_2(x,y)$ is technical and will become clear later on (cf.\ the proof of Theorem \ref{thm:survival_FPP_1} in Section \ref{sect:survival_FPP_1}).

We first aim to show that for a careful choice of the parameters the probability that a given cylinder is \emph{good at scale }$1$ is high.
Recall that $x,y \in V(\T)$.
Our first result shows that the event $\mathcal{G}_2(x,y)$ occurs with high probability, uniformly for every choice of $\T$-neighboring vertices $x$ and $y$.

\nc{c:1}

\begin{Lemma}\label{lemma:aux-1}
For any $\r\geq 1$ and constant $\uc{c:1}>0$, 
there is a $\mu_0:=\mu_0(\uc{c:1}, \Delta, \varepsilon,\beta, \delta, \alpha, \r)>0$ small enough such that for all $\mu<\mu_0$ and all $\T$-neighboring vertices $x,y$, we have
\[
\P\left [ \mathcal{G}_2(x,y)\right ]\geq 1-e^{-\uc{c:1} \r}.
\]
\end{Lemma}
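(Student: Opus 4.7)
The plan is to reduce the statement to a straightforward counting argument: bounding the number of vertices in the cylinder, and then choosing $\mu_0$ small enough that a union bound over those vertices gives the desired estimate.

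First, I would bound the diameter of $\cyl^{(\varepsilon\r+\beta\r)}_{x,y}$. Since $x,y$ are $\T$-neighbors, Lemma~\ref{lemma:T-bilip-emb} gives $d_G(x,y)\leq \alpha\r$, so every vertex on any geodesic from $x$ to $y$ lies within $G$-distance $\alpha\r$ of $x$. Adding the cylinder radius, we obtain
\[
\cyl^{(\varepsilon\r+\beta\r)}_{x,y} \subseteq B_G\bigl(x,\, (\alpha+\varepsilon+\beta)\r\bigr).
\]
By the second inequality in \eqref{eq:exp-gwth}, the cardinality of this ball (and hence of the cylinder) is bounded by $\Delta^{(\alpha+\varepsilon+\beta)\r}$.

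Next, recall that $\{\text{seeds}\}$ is a product of Bernoulli$(\mu)$ measures over $V(G)\setminus\{o\}$. Consequently, the probability that no vertex in $\cyl^{(\varepsilon\r+\beta\r)}_{x,y}$ is a seed is at least
\[
(1-\mu)^{\Delta^{(\alpha+\varepsilon+\beta)\r}} \;\geq\; 1 - \mu\,\Delta^{(\alpha+\varepsilon+\beta)\r},
\]
using the elementary inequality $(1-\mu)^N\geq 1-\mu N$.

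Therefore, choosing
\[
\mu_0 := \exp\bigl(-\uc{c:1}\r - (\alpha+\varepsilon+\beta)\r \log \Delta\bigr),
\]
(which depends on $\uc{c:1}, \Delta, \varepsilon, \beta, \alpha$ and $\r$, and indirectly on $\delta$ through $\alpha$ and $\beta$), we obtain $\mu\,\Delta^{(\alpha+\varepsilon+\beta)\r}\leq e^{-\uc{c:1}\r}$ for every $\mu<\mu_0$, and hence $\P[\mathcal{G}_2(x,y)]\geq 1-e^{-\uc{c:1}\r}$, uniformly in the choice of $\T$-neighbors $x,y$. The only mild subtlety is making sure the bound on $d_G(x,y)$ is used with the correct constant from Lemma~\ref{lemma:T-bilip-emb}; apart from this, the argument is a direct volume estimate and the main obstacle is purely bookkeeping.
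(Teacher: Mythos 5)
Your proof is correct and takes essentially the same approach as the paper: bound the cardinality of the enlarged cylinder $\cyl^{(\varepsilon\r+\beta\r)}_{x,y}$ using the exponential growth bound \eqref{eq:exp-gwth}, and then choose $\mu_0$ small enough that a union bound over that many vertices beats $e^{-\uc{c:1}\r}$. The only cosmetic difference is the volume estimate: the paper covers the cylinder by balls of radius $\varepsilon\r+\beta\r+\delta$ centered on a single fixed geodesic $\gamma_{x,y}$ (invoking $\delta$-hyperbolicity to control other geodesics via \eqref{eq:2geod}), yielding the bound $\alpha\r\,\Delta^{\varepsilon\r+\beta\r+\delta}$, whereas you enclose the whole cylinder in a single ball of radius $(\alpha+\varepsilon+\beta)\r$, giving the cruder bound $\Delta^{(\alpha+\varepsilon+\beta)\r}$. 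Since $\mu_0$ is allowed to depend on $\r$ and all the listed constants, either bound suffices, so both proofs work; the paper's estimate has $\delta$ appearing explicitly, while in yours the $\delta$-dependence is absorbed through $\alpha$, which you correctly flag.
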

Before proving this result, we state a fundamental fact.
%
Recall the bounds determined in \eqref{eq:exp-gwth}, then we have that each cylinder $\cyl^{(\varepsilon \r)}_{x,y}$ is such that 
\begin{equation}\label{eq:volume_cyl}
\left |\cyl^{(\varepsilon \r)}_{x,y}\right |\leq \Delta^{(\varepsilon \r+\delta)} \alpha\r.
\end{equation}
This bound holds due to the following.
Each vertex inside the cylinder has maximum degree $\Delta$.
By the bilipschitz embedding (Lemma \ref{lemma:T-bilip-emb}), if $d_{\T}(x,y)=1$, then $\alpha^{-1}\r \leq d_G(x,y)\leq \alpha \r$.
Each pair of vertices is joined by geodesics that are not necessarily unique, but by $\delta$-hyperbolicity two geodesics joining $x$ and $y$ are at distance at most $\delta$ from each other (cf.\ Equation \eqref{eq:2geod}).
Thus, it follows that by fixing a geodesic $\gamma_{x,y}$, then $\cyl^{(\varepsilon \r)}_{x,y}\subset \cup_{w\in \gamma_{x,y}}B_G(w, \varepsilon \r + \delta)$.
\begin{proof}[Proof of Lemma \ref{lemma:aux-1}]
Recall the definition of $\beta$ from \eqref{eq:def-BETA} and that the value of $\r$ is fixed.
Then, we can choose $\mu_0 := \mu_0(\uc{c:1}, \Delta, \varepsilon,\beta, \delta, \alpha, \r)$ such that for all $\mu<\mu_0$ the cylinder $\cyl_{x,y}^{(\varepsilon \r+\beta\r)} $ does not contain seeds with high probability.
More precisely, given $\uc{c:1}>0$ we can choose $\mu_0$ such that for all $\mu<\mu_0$ we have
$
\Delta^{\varepsilon \r+\beta\r +\delta} \alpha\r \mu < e^{-\uc{c:1}\r}.
$
In this way, by using the union bound we obtain
$
\P[\mathcal{G}_2^c(x,y)] 
 \leq \left |\cyl^{(\varepsilon \r+\beta \r)}_{x,y}\right | \mu
 \stackrel{\eqref{eq:exp-gwth}, \eqref{eq:volume_cyl} }{\leq} \Delta^{\varepsilon \r+\beta\r+\delta} \alpha\r \mu.
$
\end{proof}
The next result states that paths that are typical at scale 1 are very likely.



\nc{c:C'}
\nr{r:typ-lik}

\begin{Lemma}\label{lemma:typical-likely}
   Recall the constants $\uc{c:typical-FPP-cin}$ and $\uc{c:typical-FPP}$ from Lemma \ref{lemma:typical-FPP}.
   For any $\uc{c:typical-FPP-cin}\in (0,1)$ and $\uc{c:typical-FPP}>0$, 
   there exists $\uc{c:C'}>0$, a small enough $\varepsilon=\varepsilon(\uc{c:typical-FPP-cin}, \uc{c:typical-FPP}, \Delta )>0$ 
   and a large enough $\ur{r:typ-lik}=\ur{r:typ-lik}(\uc{c:C'},\varepsilon,\alpha)$
   so that 
   for all $\r>\ur{r:typ-lik}$ and  
    all $x,y \in V(\T)$,  $\j:=d_{\T}(x,y)$, we have 
   \[
   \P \left [\mathcal{G}_1 ^c(x,y;\j\r)\right ]\leq e^{-\uc{c:C'}\sqrt{\varepsilon}\j\r}.
   \]
   Moreover, $\mathcal{G}_1(x,y;\j\r)$ is measurable with respect to the passage times in $\cyl^{(\lceil \varepsilon \j\r+4\max \{1, \lambda\}\frac{\cout^2}{\cin^2}\j\r\rceil+1 )}_{x,y}$.
\end{Lemma}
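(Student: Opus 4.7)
The plan is to bound $\P[\mathcal{G}_1^c(x,y;\j\r)]$ by a union bound over the two failure events in the definition of $\mathcal{G}_1$, over all starting vertices $w \in \cyl^{(\varepsilon\j\r)}_{x,y}$, and over all integer times $t$ or path lengths $\ell$ in the respective ranges. The same reasoning that gave~\eqref{eq:volume_cyl} yields $|\cyl^{(\varepsilon\j\r)}_{x,y}| \leq \Delta^{\varepsilon\j\r+\delta}\alpha\j\r$, so the entropy cost of the union bound over $w$ is exponential in $\varepsilon\j\r$. The strategy is to choose $\varepsilon$ small enough (depending on $\Delta$ and the FPP decay rates) that this cost is dominated by the per-vertex failure probability, and then take $\r > \ur{r:typ-lik}$ large to absorb polynomial prefactors.

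For the sandwich condition~$(a)$, I fix $w$ and an integer $t \in [\sqrt{\varepsilon}\j\r,\, 4\cout/\cin^2\,\j\r]$. Applying Lemma~\ref{lemma:typical-FPP} once to the rate-$1$ process and once to the rate-$\lambda$ process (using Remark~\ref{rem:FPPs}) bounds the failure probability for a single pair $(w,t)$ by $2e^{-c\min\{1,\lambda\} t}$, with $c = \min(\uc{c:typical-FPP-cin}, \uc{c:typical-FPP})$. Summing the geometric series over integer $t \geq \sqrt{\varepsilon}\j\r$ and then over $w$ produces a term $\Delta^{\varepsilon\j\r+\delta}\alpha\j\r \cdot e^{-c_1\sqrt{\varepsilon}\j\r}$, which becomes $e^{-c_1'\sqrt{\varepsilon}\j\r}$ for some $c_1'>0$ once $\varepsilon$ is small enough that $\sqrt{\varepsilon}c_1 > 2\varepsilon\ln\Delta$.

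For condition~$(b)$, I fix $w$ and $\ell \in [\sqrt{\varepsilon}\cout\j\r,\, 4\cout^2/\cin^2\,\j\r]$ and apply Lemma~\ref{lemma:typical-FPP2} (first part) with $S = \ell/\cout$; the hypothesis $S \leq \ell/2$ holds as long as $\cout \geq 2$, yielding $\P[T(P) \leq \ell/\cout] \leq 2e^{-\ell/\cout}(e/\cout)^{\ell}$ after Stirling. Multiplying by the at most $\Delta^\ell$ self-avoiding paths of length $\ell$ starting at $w$ and summing over $\ell \geq \sqrt{\varepsilon}\cout\j\r$ gives a geometric bound of order $e^{-c_2\sqrt{\varepsilon}\j\r}$, \emph{provided} $\cout$ is taken large enough that $\Delta e/\cout < 1$; since enlarging $\cout$ only weakens the outer sandwich inclusion in $(a)$, we are free to replace the constant furnished by Lemma~\ref{lemma:typical-FPP} by any larger value. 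Then a final union bound over $w$, together with the same $\varepsilon$-absorption as above, yields a bound $e^{-c_2'\sqrt{\varepsilon}\j\r}$.

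Combining both contributions gives $\P[\mathcal{G}_1^c(x,y;\j\r)] \leq e^{-\uc{c:C'}\sqrt{\varepsilon}\j\r}$ for $\uc{c:C'} = \tfrac{1}{2}\min(c_1',c_2')$ and $\r$ large enough to swallow the remaining polynomial factor. Measurability follows by inspection: every $w$ in question lies within $d_G$-distance $\varepsilon\j\r$ of some geodesic from $x$ to $y$, every FPP ball considered in~$(a)$ is contained in a ball of radius at most $4\max\{1,\lambda\}\cout^2/\cin^2\,\j\r$ around $w$ (using the standard restriction that one only needs passage times on edges in that ball, since a path exiting it forces a boundary-crossing subpath within the ball), and every path considered in~$(b)$ has length at most $4\cout^2/\cin^2\,\j\r$, so all relevant edges lie inside $\cyl^{(\lceil\varepsilon\j\r+4\max\{1,\lambda\}\cout^2/\cin^2\,\j\r\rceil+1)}_{x,y}$. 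The main obstacle is that the naive enumeration of paths contributes a factor $\Delta^\ell$ which must be beaten by the per-path Poisson tail; this forces us to take $\cout$ large enough that $\Delta e/\cout < 1$, which is the reason a delicate interplay between $\varepsilon$, $\cout$ and $\Delta$ (rather than just the raw FPP estimate) is needed.
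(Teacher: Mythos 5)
Your argument follows essentially the same route as the paper's proof: a union bound over the starting vertex $w \in \cyl^{(\varepsilon\j\r)}_{x,y}$, the integer time $t$, and the path length $\ell$; Lemma~\ref{lemma:typical-FPP} (via Remark~\ref{rem:FPPs}) for part $(a)$; Lemma~\ref{lemma:typical-FPP2} plus the $\Delta^\ell$ count of self-avoiding paths from $w$ for part $(b)$; and finally absorbing the volume factor $\bigl|\cyl^{(\varepsilon\j\r)}_{x,y}\bigr| \leq \Delta^{\varepsilon\j\r+\delta}\alpha\j\r$ by taking $\varepsilon$ small relative to the per-vertex exponential decay and $\r$ large to kill polynomial prefactors. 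The paper phrases the path-count step as ``for any $\uc{c:typical-FPP}$, one can choose $\cout$ large enough so that the bound is at most $e^{-\uc{c:typical-FPP}\ell}$,'' which is exactly your observation that $\cout$ must be taken large enough to beat the $\Delta^\ell$ entropy, and your remarks about needing $\cout\geq 2$ for the $S\leq\ell/2$ hypothesis and about tracking the $\min\{1,\lambda\}$ factor in the exponent for $(a)$ are sound refinements that the paper elides into its choice of constants. Your measurability argument also matches: since every ball appearing in $(a)$ has radius at most $4\max\{1,\lambda\}\cout^2\j\r/\cin^2$ around some $w$ in the cylinder, and a first exit through the boundary would already certify failure of the outer containment, the whole event is local to the stated enlarged cylinder.
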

\begin{proof}
From Lemma \ref{lemma:typical-FPP} it follows that for any $w\in \cyl^{(\varepsilon \j\r)}_{x,y}$ and $t\in  \left [\lfloor \varepsilon^{1/2} \j\r\rfloor , 4\frac{\cout}{\cin^2}\j\r\right ]$
\[
\P \Bigl (  \{B_G(w, \min \{1, \lambda\}\cin t)\not \subseteq 
A_t^{w,\min}\} \cup \{ A_t^{w,\max} \not \subseteq  \ B_G(w, \max \{1, \lambda\}\cout t)\}
\Bigr )\leq e^{-\uc{c:typical-FPP}t}+e^{-\uc{c:typical-FPP-cin}t}.
\]
Furthermore, the first part of Lemma \ref{lemma:typical-FPP2} shows that for any path of length $\ell $ such that
$
\varepsilon^{1/2} \cout \j\r\leq \ell  \leq 4\frac{\cout^2}{\cin^2}\j\r,
$
we have that for any fixed $w\in \cyl^{(\varepsilon \j\r)}_{x,y}$ 
\begin{eqnarray*}
   \P  \left [
    \exists P\in \mathscr{P}_w\text{ with }|P|=\ell\text{ such that }  T(P)<\tfrac{\ell }{\cout}
   \right ] 
   &\leq \Delta^{\ell } 2 \frac{e^{-\ell/\cout}(\ell/\cout)^\ell}{\ell!}\\
   &\leq 2e^{-\ell/\cout} \left( \frac{\Delta e \ell}{\cout \ell}\right)^\ell.
\end{eqnarray*}
Now, for any $\uc{c:typical-FPP}$, one can choose $\cout$ large enough so that the above is at most $e^{-\uc{c:typical-FPP}\ell}$.
Thus, by the union bound over all possible starting points $w\in \cyl^{(\varepsilon \j\r)}_{x,y} $, as well as all possible values of $\ell$ and $t$:
\[
\P \left [\mathcal{G}_1 ^c(x,y;\j\r)\right ] 
\leq \left |\cyl^{(\varepsilon \j\r)}_{x,y}\right | \left [ 
\sum_{\ell=\sqrt{\varepsilon}\j\r}^{4 \frac{\cout^2 }{\cin^2}\j\r}e^{-\uc{c:typical-FPP}\ell} 
+
\sum_{t = \sqrt{\varepsilon} \j\r}^{4 \frac{\cout }{\cin^2}\j\r} \left (e^{-\uc{c:typical-FPP}t}+e^{-\uc{c:typical-FPP-cin}t}\right )\right ]
 \leq \left (\alpha \j\r \Delta^{\varepsilon \j\r+\delta} \right )e^{-2\uc{c:C'} \sqrt{\varepsilon} \j\r}, 
\]
for some constant $\uc{c:C'}>0$ depending on $\uc{c:typical-FPP-cin}$ and $\uc{c:typical-FPP}$.
Now taking $\varepsilon>0$ small enough, and then $\r$ large enough establishes the first part of the lemma. 
The second part follows directly from the definition, since we have that $ A_t^{w,\max} \subseteq  \ B_G(w, \max \{1, \lambda\}\cout t)$, which means that we only need to check all passage times inside the (finite) ball $B_G(w, \max \{1, \lambda\}\cout t)$.
\end{proof}

The next lemma shows that cylinders at scale $1$ are very likely to be good.

\nc{c:good_scale_1}
\begin{Lemma}\label{lemma:good_scale_1}
Recall the constants $\uc{c:typical-FPP-cin}$ and $\uc{c:typical-FPP}$ from Lemma \ref{lemma:typical-FPP}.
There is a constant $\uc{c:good_scale_1}=\uc{c:good_scale_1}(\uc{c:typical-FPP-cin},\uc{c:typical-FPP},\varepsilon)>0$ 
so that for all $\r\geq \ur{r:typ-lik}$, where $\ur{r:typ-lik}$ is from Lemma \ref{lemma:typical-likely}, there exists a $\mu_0>0$ for which whenever $\mu<\mu_0$ we have that for all $\T$-neighboring vertices $x,y$,
\[
\P\left [ \cyl^{(\varepsilon \r)}_{x,y} \text{ is good at scale 1}\right ]\geq 1-e^{-\uc{c:good_scale_1} \r}.
\]
\end{Lemma}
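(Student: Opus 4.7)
The plan is simply to combine Lemma~\ref{lemma:aux-1} and Lemma~\ref{lemma:typical-likely} via a union bound. Recall that by definition, the cylinder $\cyl^{(\varepsilon \r)}_{x,y}$ is good at scale $1$ precisely when both events $\mathcal{G}_1(x,y;\r)$ and $\mathcal{G}_2(x,y)$ occur. So it suffices to bound each of the two complementary events separately.

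First I would apply Lemma~\ref{lemma:typical-likely} with $\j = d_{\T}(x,y) = 1$, which is the relevant case since $x,y$ are $\T$-neighboring. This yields, provided $\r \geq \ur{r:typ-lik}$ and $\varepsilon$ is chosen small enough depending on $\uc{c:typical-FPP-cin}, \uc{c:typical-FPP}, \Delta$, the bound
\[
\P \left[\mathcal{G}_1^c(x,y;\r)\right] \leq e^{-\uc{c:C'}\sqrt{\varepsilon}\,\r}.
\]
Next I would apply Lemma~\ref{lemma:aux-1} with the constant $\uc{c:1}$ taken to be, say, $\uc{c:C'}\sqrt{\varepsilon}$. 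This provides a threshold $\mu_0 = \mu_0(\uc{c:1}, \Delta, \varepsilon, \beta, \delta, \alpha, \r) > 0$ so that for every $\mu < \mu_0$,
\[
\P\left[\mathcal{G}_2^c(x,y)\right] \leq e^{-\uc{c:C'}\sqrt{\varepsilon}\,\r}.
\]

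Combining the two estimates by a union bound gives
\[
\P\left[\cyl^{(\varepsilon \r)}_{x,y} \text{ is good at scale }1\right] \geq 1 - 2\,e^{-\uc{c:C'}\sqrt{\varepsilon}\,\r}.
\]
Taking $\uc{c:good_scale_1}$ to be any constant strictly smaller than $\uc{c:C'}\sqrt{\varepsilon}$ (depending only on $\uc{c:typical-FPP-cin}, \uc{c:typical-FPP}, \varepsilon$) and then enlarging $\r$ if necessary to absorb the factor $2$, we obtain the desired bound $1 - e^{-\uc{c:good_scale_1}\r}$.

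There is no real obstacle here beyond bookkeeping: the content has already been done in Lemmas~\ref{lemma:aux-1} and~\ref{lemma:typical-likely}. The only minor point of care is to make sure the parameters $\varepsilon$ and $\r$ fixed in the previous lemmas (in particular, the choice of small $\varepsilon$ needed for Lemma~\ref{lemma:typical-likely}) are consistent with the threshold $\ur{r:typ-lik}$ and with the lower bound on $\r$ implicit in the factor $\alpha\r\Delta^{\varepsilon\r+\beta\r+\delta}\mu < e^{-\uc{c:1}\r}$ from Lemma~\ref{lemma:aux-1}. This forces $\mu_0$ to depend on $\r$ (and all the other structural constants), which is harmless for the final statement since $\r$ will be fixed throughout the multi-scale construction.
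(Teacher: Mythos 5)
Your proposal is correct and follows exactly the same route as the paper: decompose the failure event into $\mathcal{G}_1^c(x,y;\r)$ and $\mathcal{G}_2^c(x,y)$, bound each via Lemma~\ref{lemma:typical-likely} and Lemma~\ref{lemma:aux-1} respectively, and combine by a union bound. The bookkeeping on constants is slightly more explicit than what the paper writes, but the argument is identical.
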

\begin{proof}
The definition of good cylinder at scale $1$ implies that for any fixed pair of $\T$-neighbors $x,y\in V(\T)$ we have
$
\P \left [\cyl^{(\varepsilon \r)}_{x,y} \text{ is not good}\right ]\leq \P \left [\mathcal{G}_1^c(x,y;\r)\right ]+\P\left [\mathcal{G}_2^c(x,y)\right ].
$
The lemma follows from Lemmas \ref{lemma:aux-1} and  \ref{lemma:typical-likely}.
\end{proof}

\subsection{Higher scales}\label{sect:higher_scales}
In this section we define what \emph{higher scales} are, and what a \emph{good cylinder at a higher scale} is.
These concepts will be used in Section \ref{sect:survival_FPP_1} to show survival of $\Fo$.


\begin{Definition}[Good cylinders, higher scales]
Recall the definition of a cylinder from Definition \ref{def:cylinder}.
Consider two distinct and not $\T$-neighboring vertices $x,y\in V(\T)$ and let $ \j:=d_\T(x,y)$.
Note that $\j\geq 2$.
Then the cylinder $\cyl^{(\varepsilon \j\r)}_{x,y}$ is \emph{good at scale} $\j\geq 2$ if the event $\mathcal{G}_1(x,y;\j\r)$ defined in \eqref{eq:def-G-hat} is realized.
\end{Definition}
We emphasize that Lemma \ref{lemma:typical-likely} implies that for all $x,y\in V(\T)$ with $\j=d_\T(x,y)\geq 2$
\[
\P \left [ \cyl^{(\varepsilon \j\r)}_{x,y} \text{ is good at scale }\j\right ]=\P [\mathcal{G}_1(x,y;\j\r)]\geq 1-e^{-\uc{c:C'}\sqrt{\varepsilon}\j\r}\geq 1-e^{-\uc{c:good_scale_1}\j\r}.
\]

\section{Good $\T$-paths}\label{sec:tpaths}
In what follows, we will need the definition of a \emph{good $\T$-path} (such path is to be found on the tree $\T$).
Roughly speaking, a sequence of $\T$-neighboring vertices $\{v_i\}_{i\geq 0}\in V(\T)$ (where we set $v_0:=o$) is a good $\T$-path if it is covered by good cylinders at all scales.

\subsection{Definition and properties of Good $\T$-paths}
For any infinite $\T$-path $\gpath$ we define
\begin{equation}\label{eq:def-gd-path}
\left \{\gpath \text{ is good}\right \} \ := \ \bigcap_{k=1}^\infty \bigcap_{
\footnotesize \begin{array}{ll}
& u,v\in \gpath \\ 
& d_\T(u,v)=k
\end{array}} 
\normalsize
\left \{\cyl^{(\varepsilon k\r)}_{u,v} \text{ is good} \right \}.
\end{equation}
Now suppose that we have found an infinite good $\T$-path $\gpath$ (we will show in Section \ref{sect:ex-gpath} that indeed it exists), and fix it throughout this section.
In the following, we set
\begin{equation}\label{eq:V(gpath)}
V( \gpath) := \left \{v\in V(G) \ : \ v\in  \left (\gpath  \cup  V\bigl ( E(\gpath )\bigr ) \right )\right \},
\end{equation}
where $V\left ( E(\gpath )\right )$ is the set of vertices in $V(G)$ that lie on the ``edges'' of $\gpath$; more precisely, we define the set $V\left ( E(\gpath )\right )$ as follows.
For any pair of $\T$-neighboring vertices $w,z \in V(\T)$, there is a geodesic (with respect to $d_{\T_3}$) path on the binary tree $\T_3$ that connects $w$ and $z$, denote it by $\gamma_{\T_3}(w,z)$. 
(Note that $d_{\T_3}(w,z)=\r$.)
For all $w,z\in V(\T_3)$ consider the image of the geodesic $\gamma_{\T}(w,z)$ via the bilipschitz embedding.
This gives a sequence of $d_{\T_3}$-adjacent vertices, and each pair can be connected by a $d_G$-geodesic path.
The concatenation of such $d_G$-geodesic paths is itself a path in $G$, denote it by $\pi_G(w,z) $.
Therefore for all pairs $w,z$ such that $d_{\T}(w,z)=1$ we define 
\[
V(w,z):=\{v\in G \ : \ v\in \pi_G(w,z)\},
\]
and finally we set 
\[
V\left ( E(\gpath )\right ):= \bigcup_{ 
w,z \in \gpath, \ d_{\T}(w,z)=1   }
V(w,z).
\]
For a graphical representation see Figure \ref{fig:V(path)}.

\begin{figure}[h!]
\begin{center}
\includegraphics[scale=.8]{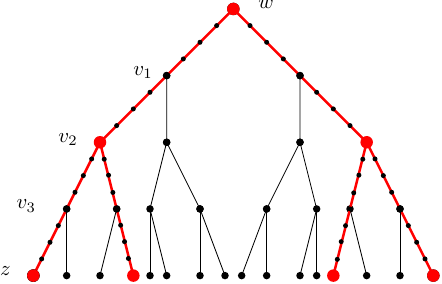}
\caption{Red vertices such as $w, v_2$ and $z$ are those on $\T$, large black vertices on a red path such as $v_1,  v_3$ are those on $\T_3$, whereas the smallest vertices are those of $G$.
Here $\r=2$.}\label{fig:V(path)}
\end{center}
\end{figure}



Once we have fixed the path $\gpath$, we define
\[
\cyl^{(\varepsilon \r/2)}_{o,\infty}:= \bigcup_{v\in V(\gpath) }B_G\left (v, \frac{\varepsilon\r}{2}\right ).
\]
Note that this quantity (and those defined throughout this section) depends on $\gpath$, but we omit this dependence in order to make the notation less heavy.
For any vertex $z\in \gpath$ define
\[
N(z):= \left \{ w\in \cyl^{(\varepsilon \r/2)}_{o,\infty} \ : \ \forall z'\in \gpath \text{ we have }  d_G(w,z)\leq  d_G(w,z')  \right \},
\]
with ties broken according to an arbitrary rule so that $N(z)$, $z\in \gpath$, forms a partition of $\cyl_{0,\infty}^{(\varepsilon \r/2)}$.
In words, for any $z\in \gpath $, $N(z)$ is the set of all vertices inside $\cyl^{(\varepsilon \r/2)}_{o,\infty}$ that are closer to $z$ than to any other vertex of the tree $\T$.

At this point we use a fundamental fact, namely that for large enough $\r$, for any vertex $x \in  \gpath$ there are vertices $u', v'\in \gpath$ such that
\begin{equation}\label{eq:Nx}
N(x) \subset \cyl^{(2\varepsilon \r/3)}_{u',v'};
\end{equation} 
in particular, $u'$ and $v'$ can be the vertices on $\gpath$ before and after $x$, which from now on we denote simply by $u$ and $v$.
Therefore for all vertices $x\in \gpath$ we can define
\begin{equation}\label{eq:CNx-CNy}
\cyl(N(x)):= \cyl^{(2\varepsilon \r)}_{u,v} .
\end{equation}
Furthermore, note that we have $d_G(N(x), \partial \cyl(N(x)))\geq \frac{4}{3}\varepsilon \r$.
However, later on we will simply use the fact that $d_G(N(x), \partial \cyl(N(x)))\geq \varepsilon \r/2$.
\begin{Remark}
Note that by construction it follows that the vertices $u,v \in   \gpath$ defined above are such that
$
d_{\T}(u,v)= 2,
$
thus, the event $\{\cyl(N(x))$ is good$\}$ is contained in the event $\{$all cylinders $\cyl^{(2\varepsilon \r)}_{w,z}$ with $w,z\in \gpath$ and $d_{\T}(w,z)=2$ are good$\}$.
\end{Remark}

%

Now we proceed with the following technical result.
Recall the definition of the internal boundary of a set, given in \eqref{eq:internal-bdary}, and the result from Proposition \ref{prop:ohshika-papad}.
\nr{r:r-delta-kappa}
\begin{Lemma}\label{lemma:close-geodesics}
   Let $\kappa=\kappa(\alpha,\delta)$ be the constant appearing in Proposition \ref{prop:ohshika-papad}, and let $\ur{r:r-delta-kappa}=\ur{r:r-delta-kappa}(\varepsilon, \delta, \kappa)$ be defined so that
   $
   \ur{r:r-delta-kappa}\geq \frac{2}{\varepsilon}(3 \delta+\kappa).
   $
   Then for all $\r \geq \ur{r:r-delta-kappa}$, for any pair of vertices $x,y\in \gpath$, any two points $\o\in \partial N(x) $ and $\ent\in \partial N(y)$, all $d_G$-geodesics $\gamma_{\o, \ent}\in \Gamma_{\o, \ent}$ are completely contained inside the set $ \cyl^{(\varepsilon\r/2)}_{\o,x}\cup \cyl^{(\varepsilon\r/2)}_{x,y} \cup \cyl^{(\varepsilon\r/2)}_{y, \ent}$.
   Furthermore, the set $ \cyl^{(\varepsilon\r/2)}_{\o,x}\cup \cyl^{(\varepsilon\r/2)}_{x,y} \cup \cyl^{(\varepsilon\r/2)}_{y, \ent}$ also contains the (unique) $d_\T$-geodesic $\gamma_\T (x,y)$.
\end{Lemma}
\begin{proof}
In order to proceed, we need to make use of the fact that triangles are $\delta$-thin.
More precisely, let $\o'$ and $\ent' $ be any two projections (that can be chosen arbitrarily if they are not unique) on the geodesic segment $\gamma_{x,y}$ of the vertices $\o$ and $\ent$ respectively.
In formulas:
\[
\begin{split}
\o' & := \text{any vertex }v\in 
\gamma_{x,y} \, \text{ such that } d_G(\o, v)=d_G(\o, \gamma_{x,y});\\
\ent' & := \text{any vertex }v\in \gamma_{x,y} \, \text{ such that } d_G(\ent, v)=d_G(\ent, \gamma_{x,y}).
\end{split}
\]
Now consider the two triangles $\{\o,\o' , \ent\} $ and $\{\o' , \ent, \ent'\} $ (cf.\ Figure \ref{fig:2triangles}).
\begin{figure}[h!]\label{fig:2triangles}
\begin{center}
\includegraphics[scale=0.6]{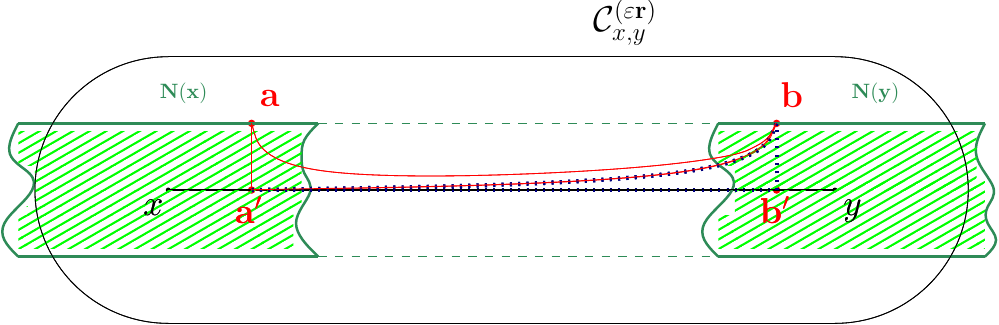}
\caption{The sets $N(x)$ and $N(y)$ are drawn in green (and they exit $\cyl_{x,y}^{(\varepsilon \r)}$).
The triangles described above are drawn with a thin red line and a blue thick dotted line respectively.}
\end{center}
\end{figure}
%
%
These are $\delta$-thin.
A direct consequence of this fact is that any (arbitrarily chosen) geodesic $\gamma_{\o,\ent}$ is contained inside the union of balls of radius $\delta$ centered at
$
\Gamma_{\o,\o'}\cup \Gamma_{\o',\ent}.
$
Using again $\delta$-thinness, we have that any geodesic $\gamma_{\o',\ent}\in \Gamma_{\o',\ent}$ has to be contained inside the union of the balls of radius $\delta$ centered at
$
\Gamma_{\o',\ent'}\cup \Gamma_{\ent', \ent}.
$
Thus, it immediately follows that $\gamma_{\o,\ent}$ is contained inside the union of balls of radius $2\delta$ centered at
$
\Gamma_{\o,\o'}\cup \Gamma_{\o',\ent'}\cup \Gamma_{\ent', \ent}.
$
A straightforward consequence of this is that any geodesic $\gamma_{\o,\ent}$ is such that
\[
\gamma_{\o,\ent}\subset \cyl^{(2\delta)}_{\o,\o'}\cup \cyl^{(2\delta)}_{\o',\ent'}\cup \cyl^{(2\delta)}_{\ent', \ent}.
\]
%
By construction, this union is contained inside $\cyl^{(3\delta)}_{\o,x}\cup \cyl^{(3\delta)}_{x,y} \cup \cyl^{(3\delta)}_{y, \ent}$ which, together with $\r\geq \ur{r:r-delta-kappa}$, gives the first part of the statement.
The final sentence of the statement follows from Proposition~\ref{prop:ohshika-papad}.
\end{proof}

\subsection{Existence of a good $\T$-path}\label{sect:ex-gpath}

The next step consists in showing that with positive probability there is an infinite good $\T$-path $\gpath$ containing the 
origin $o$.
\begin{Proposition}\label{prop:existence_gpath}
Recall the definition of good path from \eqref{eq:def-gd-path}, and assume that $\r$ is large enough.
Then,
$
\P\left [ \text{There is an infinite $\T$-path }\gpath \text{ containing }o \text{ and such that }  \gpath \text{ is good} \right ]>0.
$
\end{Proposition}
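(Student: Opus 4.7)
The plan is to prove Proposition \ref{prop:existence_gpath} via a second moment (Paley--Zygmund) argument applied to the random variable counting ``partially good'' $\T$-paths of length $n$ from $o$, combined with a compactness argument (K\"onig's lemma) to extract an infinite good ray. For each $n \geq 1$, let $Z_n$ denote the number of paths $o = v_0, v_1, \ldots, v_n$ in $\T$ with the property that for every $1 \leq k \leq n$ and every pair of vertices $v_i, v_j$ on the path with $|i-j| = k$, the cylinder $\cyl^{(\varepsilon k \r)}_{v_i, v_j}$ is good at scale $k$. Crucially, if a path of length $n+1$ has this property, then so does its length-$n$ prefix, so the events $\{Z_n > 0\}$ are decreasing in $n$. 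Consequently, $\P[\bigcap_{n \geq 1} \{Z_n > 0\}] = \lim_n \P[Z_n > 0]$, and by K\"onig's lemma (applied to the locally finite subtree of length-$n$ good paths), the intersection implies existence of an infinite good $\T$-path from $o$. It therefore suffices to prove $\liminf_n \P[Z_n > 0] > 0$ for $\r$ large enough.

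For the first moment, I would first note that $\T$ has $2^n$ paths of length $n$ from $o$, so $\E[Z_n] = 2^n \cdot \P[p \text{ is good}]$ for any fixed such path $p$. The goal is to show $\P[p \text{ is good}] \geq e^{-\eta(\r) n}$ with $\eta(\r) \to 0$ as $\r \to \infty$, which then yields exponential growth of $\E[Z_n]$. By Lemmas \ref{lemma:good_scale_1} and \ref{lemma:typical-likely}, the probability that the scale-$k$ cylinder associated with a fixed pair on $p$ is bad is at most $e^{-\uc{c:good_scale_1} k \r}$. The expected number of bad cylinders on $p$ is therefore bounded by $\sum_{k=1}^{n} (n-k+1) e^{-\uc{c:good_scale_1} k \r} \leq n \cdot q(\r)$ with $q(\r) \to 0$ as $\r \to \infty$. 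To convert this expectation into a lower bound on the probability of zero bad cylinders, I would exploit the fact that bad events attached to well-separated regions of $G$ are genuinely independent (since they involve disjoint collections of passage times and seeds), and that even for overlapping cylinders the dependence is confined to a $\r$-neighborhood; grouping bad events by scale and by spatial support, one obtains a product-type lower bound $\P[p \text{ good}] \geq \prod_{k} \bigl(1 - O(e^{-\uc{c:good_scale_1} k \r})\bigr)^{n-k+1} \geq e^{-\eta(\r) n}$, and choosing $\r$ large enough so that $\eta(\r) < \log 2$ gives $\E[Z_n] \geq 2^{cn}$ for some $c > 0$.

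For the second moment, decompose
\[
\E[Z_n^2] = \sum_{p, p'} \P[p \text{ and } p' \text{ good}] = \sum_{m=0}^{n} \sum_{\substack{p, p' \\ \text{MRCA at level } m}} \P[p \text{ and } p' \text{ good}],
\]
where the inner sum has $2^m \cdot 2 \cdot (2^{n-m})^2$ ordered pairs. For paths with MRCA at level $m$, the shared prefix of length $m$ contributes a factor $\P[\text{length-}m\text{ path good}]$, while the two disjoint suffixes, together with the ``cross-cylinders'' at scales $k > 2(n-m)$ that span the two branches, contribute factors that — using again the near-independence of spatially disjoint events — are bounded by $\P[\text{length-}(n-m)\text{ path good}]^2$ up to a constant. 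A straightforward summation then yields $\E[Z_n^2] \leq C \cdot \E[Z_n]^2$ for some constant $C$ independent of $n$, and Paley--Zygmund gives $\P[Z_n > 0] \geq 1/C$ uniformly in $n$, completing the argument.

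The main obstacle is the lower bound on $\P[p \text{ good}]$ and the corresponding control of the second moment, precisely because the multi-scale cylinders share passage times: a scale-$k$ cylinder contains all scale-$j$ cylinders for $j < k$ on the same subpath, and the goodness event $\mathcal{G}_1$ is not monotone in the passage times (it requires them to be neither too small nor too large), which rules out a direct application of FKG. The standard remedy is to further decompose each $\mathcal{G}_1^c$ into its ``too small'' and ``too large'' sub-events, each of which is monotone; apply FKG to each sub-family separately; and use the Gromov detour estimate (Proposition~\ref{prop:gromov-result}) together with the spatial locality of each good event to limit the range of genuine dependence to $O(\r)$, which after a careful bookkeeping of scales produces the summable bounds needed for both moments.
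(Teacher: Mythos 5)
Your proposal takes a genuinely different route from the paper's, and unfortunately both of its key estimates have gaps that I don't see how to close.

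The paper does not go through a second moment. Instead, it declares that each bad scale-$\j$ cylinder on the pair $(x,y)$ ``kills'' the whole subtree rooted at the ancestor $\u(x,\i-3\alpha^2\eta\j)$, so that a $\T$-path is good if and only if it avoids all killed vertices. The probability that a single vertex is killed is then controlled by a union bound over scales (which is fine because it is per vertex, not per path), and the event ``no good path exists'' is the event that some minimal cutset of $\T$ consists entirely of killed vertices. The crucial observation is Claim~\ref{claim:indep-cutset}: vertices in a minimal cutset are pairwise incomparable in the tree order, hence the cylinders that could have killed them are spatially disjoint in $G$, and the killing events are genuinely independent. Combined with the Catalan-number bound on the number of minimal cutsets (Lemma~\ref{lemma:nr-cutsets}), a single union bound finishes the proof. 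The whole point of this design is that it never needs to lower-bound the probability that a fixed path is good, which is exactly where your approach runs into trouble.

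Concretely, there are two gaps in your proposal. First, the bound $\P[p \text{ good}]\geq e^{-\eta(\r)n}$ is not justified. A plain union bound over the $\Theta(n^2)$ pairs on $p$ gives an error term of order $n\cdot q(\r)$, which exceeds $1$ for $n$ large, so it yields nothing. You correctly identify FKG as the natural remedy and correctly note that $\mathcal{G}_1$ is not monotone, but the proposed fix does not work: $\mathcal{G}_1$ is an intersection $A_{\mathrm{inc}}\cap A_{\mathrm{dec}}$ of an increasing event (upper FPP-ball containment and large passage times on long paths) and a decreasing event (lower FPP-ball containment). FKG gives $\P[A_{\mathrm{inc}}\cap A_{\mathrm{dec}}]\leq\P[A_{\mathrm{inc}}]\P[A_{\mathrm{dec}}]$, i.e.\ the \emph{wrong direction}. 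Also, the claim that ``even for overlapping cylinders the dependence is confined to a $\r$-neighborhood'' is false: a scale-$k$ cylinder on $p$ contains all smaller-scale cylinders on the same stretch and shares all their passage times; the dependence range is $\Theta(k\r)$, not $\Theta(\r)$, and these are nested, not merely overlapping. Second, in the second-moment bound, the ``cross-cylinders'' spanning both branches past the MRCA are not spatially disjoint from the suffix cylinders --- they contain them --- so the asserted factorization $\P[\text{prefix good}]\cdot\P[\text{suffix good}]^2$ (up to constants) is not a consequence of ``near-independence of spatially disjoint events.'' Both gaps trace to the same issue: your approach forces you to control the joint law of all scales on a single branch, and that dependency structure is precisely what the paper's cutset argument is engineered to sidestep. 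The K\"onig-plus-monotone-intersection framing at the start of your proposal is fine and could be kept; what needs to change is the mechanism for showing $\liminf_n\P[Z_n>0]>0$, and the killed-subtree/cutset idea is the device that actually makes this work.
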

Before proceeding to the proof, we need to introduce some terminology. 

\paragraph{Cutsets.}
We refer to the term \emph{cutset} whenever we mean a subset of vertices of $\T$ that separates the root $o$ from infinity.
In particular, we will need the so-called \emph{minimal} cutsets, which are cutsets that do not have any subsets which are cutsets themselves.
More precisely, a cutset $\Pi_k$ of cardinality $k$ is such that
\[
\Pi_k\subseteq V(\T) \ : \ |\Pi_k|=k, \text{ and any infinite path containing }o\text{ passes through }\Pi_k.
\]
Furthermore, $\Pi_k$ is a \emph{minimal} cutset of cardinality $k$ if
\[
\Pi_k \text{ is a cutset, } |\Pi_k|=k, \text{ and for all subsets }  S\subsetneq \Pi_k , \ S \text{ is not a cutset}.
\]
Consider the tree $\T$ rooted at $o$.
Since $\T$ has no cycles, any vertex of the tree is a cutpoint, i.e., its removal separates the tree into two disjoint connected components.
Note that if we remove a minimal cutset from $\T$, the external boundary of the finite connected component containing $o$ is the minimal cutset.
Therefore, we can identify any minimal cutset with the internal boundary of a finite induced subtree of $\T$ containing $o$.
Since the boundary of this subtree is the set of its leaves, all minimal cutsets of cardinality $k$ (for any $k\geq 1$) correspond to the boundary of some \emph{rooted} induced subtree of $\T$ that has exactly $k$ leaves.
%
The next lemma finds an upper bound on the number of minimal cutsets of cardinality $k$.
\begin{Lemma}\label{lemma:nr-cutsets}
For all $k\geq 1$ we have
\[
\Bigl | \{\Pi_k \ : \ \Pi_k \text{ is a minimal cutset of $\T$ of cardinality }k\}\Bigr |<4^{k-1}.
\]
\end{Lemma}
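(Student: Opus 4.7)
The plan is to leverage the characterization already set out in the paragraph preceding the lemma: any minimal cutset $\Pi$ of size $k$ in $\T$ determines the finite component $C$ of $o$ in $\T\setminus \Pi$, and $\Pi$ is recovered as the set of vertices of $\T$ that are adjacent to, but not in, $C$. Writing $\widehat{C}:=C\cup \Pi$, my first step is to observe that $\widehat{C}$ is a \emph{full} rooted binary subtree of $\T$, in the sense that every vertex $v\in C$ has \emph{both} of its two $\T$-children lying in $\widehat{C}$. Indeed, if $c$ is a child of $v\in C$ and $c\notin \Pi$, then the unique $\T$-path from $o$ to $c$ goes through $v$ and avoids $\Pi$, so $c\in C$; otherwise $c\in \Pi$. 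I would then note that the leaves of $\widehat{C}$ coincide exactly with $\Pi$: vertices of $C$ always have both children in $\widehat{C}$ and are therefore internal, while a $w\in \Pi$ has no $\T$-descendant in $\widehat{C}$, because any such descendant would have to sit in $\Pi$ with parent outside $C$, contradicting the minimality of $\Pi$.

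With this correspondence set up, counting minimal cutsets of cardinality $k$ reduces to counting finite, rooted, full plane binary subtrees of $\T$ with exactly $k$ leaves. Since $\T$ carries a distinguished left/right child at every vertex, such subtrees are in natural bijection with plane full binary trees on $k$ leaves, whose number is the classical Catalan number
\[
C_{k-1}\;=\;\frac{1}{k}\binom{2(k-1)}{k-1}.
\]

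To conclude, I would invoke the standard estimate
\[
C_{k-1}\;\leq\;\binom{2(k-1)}{k-1}\;<\;\sum_{j=0}^{2(k-1)}\binom{2(k-1)}{j}\;=\;4^{k-1},
\]
valid for every $k\geq 2$; the strict inequality holds because for $n\geq 1$ the binomial sum has more than one positive term, so the central binomial coefficient is strictly smaller than the full sum. The case $k=1$ is immediate: the root $o$ has two children in $\T$, so no single vertex can separate $o$ from infinity, and hence the set of minimal cutsets of cardinality $1$ is empty, so the claimed inequality reads $0<4^{0}=1$.

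The only part of this argument that is not pure bookkeeping is the verification that $\widehat{C}$ is full and that its leaves are exactly $\Pi$; this is where minimality of the cutset is used. I do not expect real obstacles here, since everything after this structural observation is a routine application of the Catalan number bound.
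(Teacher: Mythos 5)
Your argument is correct and follows the same overall strategy as the paper: identify minimal cutsets with leaf-sets of finite full binary subtrees rooted at $o$, count these by the Catalan number $C_{k-1}$, and then bound $C_{k-1}$ by $4^{k-1}$. The differences are minor but worth noting. First, the paper obtains $C_{k-1}\leq 4^{k-1}$ via the recursion $C_n/C_{n-1}=(4n-2)/(n+1)<4$, whereas you use $C_{k-1}\leq\binom{2(k-1)}{k-1}<\sum_j\binom{2(k-1)}{j}=4^{k-1}$; both are elementary, yours is arguably quicker. Second, you spell out the bijection (that $\widehat{C}$ is full and that its leaves are exactly $\Pi$), which the paper states without verification; your argument for why a vertex of $\Pi$ cannot have a $\T$-descendant in $\widehat{C}$ is the right idea and is where minimality genuinely enters. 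Third, you treat $k=1$ separately by observing that since $o$ has two children no singleton is a cutset, so the count is $0<1$; this is actually more careful than the paper, whose stated chain $C_{k-1}<4^{k-1}$ is an equality at $k=1$ if one naively plugs in $C_0=1$. None of this changes the downstream use of the lemma, but your version is tighter at this edge case.
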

\begin{proof}
Start by recalling the definition of \emph{Catalan number}: set $C_0:=1$ and for all $n\geq 1$ set
$
\operatorname{C}_n := \frac{1}{n+1} {2n \choose n}.
$
From the definition, for all $n\geq 2$, $\operatorname{C}_n$ satisfies the recursive relation
$
\frac{\operatorname{C}_n}{\operatorname{C}_{n-1}}= \frac{4n-2}{n+1}<4.
$
Hence, 
\begin{equation}\label{eq:bound_catalan}
\operatorname{C}_n\leq 4^n, \ \text{ for all }n\geq 1.
\end{equation}
%
In the case of a binary tree, a minimal cutset $\Pi_k$ is the (internal) boundary of a rooted (binary) tree which has exactly $k$ leaves.
Thus, by our previous discussion, the number of such $\Pi_k$'s is known to be the Catalan number $\operatorname{C}_{k-1} $.
Hence, for all $k\geq 1$ we have
\[
\Bigl | \{\Pi_k \ : \ \Pi_k \text{ is a minimal cutset of $\T$ of cardinality }k\}\Bigr |= \operatorname{C}_{k-1}\stackrel{\eqref{eq:bound_catalan} }{<}4^{k-1},
\]
finishing the proof.
\end{proof}

We proceed now with a proof of the proposition.
\begin{proof}[Proof of Proposition \ref{prop:existence_gpath}]
%
Consider two vertices $x,y\in V(\T)$ and let $\j:=d_\T (x,y)$, and suppose that $\cyl_{x,y}^{(\varepsilon \j\r)} $ is bad.
To simplify the notation, we assume that $x$ is an ancestor of $y$, and denote 
\[
\i:=d_\T (o,x);
\]
thus $ \i+\j=d_\T (o,y)$.
Moreover, for any vertex $v\in \T$ and integer $n\geq 0$ we denote by $\u(v,n)$ the ancestor of $v$ at generation $n$, and we set this to be $o$, if such a vertex does not exist.
More precisely, $\u(v,n)$ is the vertex in $V(\T)$ satisfying the following properties:
\begin{itemize}
\item $d_\T \bigl (o,\u(v,n)\bigr )=n$, with $ \u(v,n):=o$ whenever $d_\T \bigl (o,v\bigr )\leq n$;
\item $\u(v,n)$ 
belongs to the shortest path from $o$ to $v$. 
\end{itemize}
To determine whether a cylinder $\cyl_{x,y}^{(\varepsilon \j\r)}$ is good or not, we need to observe all passage times contained in the (larger) set
\[
\cyl_{x,y}^{(\lceil \varepsilon + 4 \max \{ 1, \lambda\}\frac{\cout^2}{\cin^2} \rceil \j\r+1)}\subseteq \cyl_{x,y}^{(\lceil \varepsilon + 4 \max \{ 1, \lambda\}\frac{\cout^2}{\cin^2} +1\rceil \j\r)}.
\]
This is a consequence of the second part of Lemma~\ref{lemma:typical-likely}.
In order to make the notation less heavy, we set
\begin{equation}\label{eq:defin-ETA}
\eta:=\left \lceil \varepsilon + 4  \max \{ 1, \lambda\}\frac{\cout^2}{\cin^2} +1\right \rceil .
\end{equation}
For every pair of vertices $x,y$ as described above, for which $\cyl_{x,y}^{(\varepsilon \j\r)} $ is bad, we declare that 
the entire (infinite) subtree rooted at $\u(x,\i-3\alpha^2\eta\j) $ is also bad, that is, all descendants of $\u(x,\i-3\alpha^2\eta\j) $ (including itself) are declared to be bad.
Throughout this proof we will say that we will \emph{remove} (or \emph{discard}) the root  $\u(x,\i-3\alpha^2\eta\j) $ and when this happens we also remove/discard the entire subtree.
Recall that for all $x\in V(\T)$ such that  $\i-3\alpha^2\eta \j\leq 0$ we set $\u(x,\i-3\alpha^2\eta\j):=o$.

Any vertex of $\T$ has at most $2^{\lceil 3\alpha^2\eta \rceil \j } $ descendants at $d_\T$-distance $\lceil 3\alpha^2\eta \rceil \j$.
This implies that for any given $v\in V(\T)\setminus \{o\} $ there are at most $2^{\lceil 3\alpha^2\eta \rceil \j} $ vertices $x$ such that $d_{\T}(o,x)=\i\geq \lceil 3\alpha^2\eta \rceil \j$ and $v=\u(x,\i-3\alpha^2\eta \j) $.
For a graphical representation see Figure \ref{fig:o-x-u}.

\begin{figure}[h!]\label{fig:o-x-u}
\begin{center}
\includegraphics[scale=0.9]{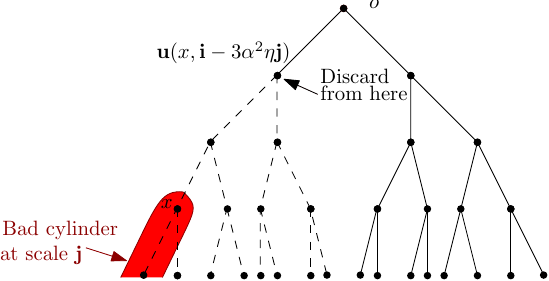}
\caption{A graphical representation of the definition of $\u(x,\i-3\alpha^2\eta \j)$.
The dashed subtree is the one that will be discarded due to the bad cylinder $\cyl_{x,y}^{(\varepsilon \j\r)}$.}
\end{center}
\end{figure}

\nc{c:good_cyl}
By Lemmas \ref{lemma:good_scale_1} and \ref{lemma:typical-likely}, there is a constant $\uc{c:good_cyl}$ such that 
for all $\j\geq 1$ and all large enough $\r$ we have
\[
\P \left [ \text{a given cylinder at scale }\j\text{ is not good}\right ]\leq e^{-\uc{c:good_cyl}\j\r}.
\]
Thus, by the union bound over all possible bad ``scale-$\j$ cylinders'' that could cause any vertex $v\in V(\T)\setminus \{o\} $ to be discarded (as the root of an infinite induced subtree as described above) we obtain
\begin{equation}\label{eq:u_removed}
\P \left [ \text{vertex }v\in V(\T)\setminus\{o\}\text{ is discarded due to a bad cylinder at scale }\j \right ]\leq 2^{3\alpha^2\eta \j}e^{-\uc{c:good_cyl}\j\r}.
\end{equation}
In particular, this quantity can be made arbitrarily small by choosing $\r$ very large, as we argue below.
The above holds for any vertex that is not $o$, since there are more vertices $x$ for which $o=\u(x,\i-3\alpha^2\eta\j)$.
The number of such vertices $x$ is at most $2^{3\alpha^2\eta \j+1}$, the size of a complete binary tree of depth $3\alpha^2\eta \j$.
Thus, by taking also the union bound over all values of $\j\geq 1$, relation \eqref{eq:u_removed} implies that
\[
\P \left [ \text{a given vertex }v\in V(\T) \text{ is removed}\right ]\leq \sum_{\j\geq 1}  2^{3\alpha^2\eta \j+1}e^{-\uc{c:good_cyl}\j\r}.
\]
\nc{c:C'''}
\nr{r:C'''}
At this point for any given constant $\uc{c:C'''}>0$ arbitrarily large, we can pick a value $\ur{r:C'''}=\ur{r:C'''}(\alpha, \eta,\uc{c:good_cyl}, \uc{c:C'''})>0$ so large that for all $\r \geq \ur{r:C'''}$
\[
\P \left [ \text{a given vertex }v\in V(\T) \text{ is removed}\right ] 
\leq \sum_{\j\geq 1}  2^{3\alpha^2\eta \j+1}e^{-\uc{c:good_cyl}\j\r}
<e^{-\uc{c:C'''}}.
\]
%
Now the next claim shows on each branch of $\T$ vertices are removed independently of one another. 
\begin{Claim}\label{claim:indep-cutset}
Consider any two vertices $u,v\in V(\T)$, such that
\begin{itemize}
\item $v$ is not an ancestor of $u$ (that is, $v\neq \u(u,n)$, for all $n\geq 0$) and
\item $u$ is not an ancestor of $v$ (that is, $u\neq \u(v,n)$, for all $n\geq 0$).
\end{itemize}
Then $\P \left [\{u \text{ is removed}\}\cap \{v \text{ is removed}\} \right ] = \P \left [u \text{ is removed}\right ]\P \left [ v \text{ is removed} \right ] $.
\end{Claim}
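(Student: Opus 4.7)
The plan is to exhibit, for each vertex $w\in V(\T)\setminus\{o\}$, a region $R(w)\subseteq V(G)$ whose passage times and seed indicators together determine the event $\{w\text{ is removed}\}$, and then to show that $R(u)\cap R(v)=\emptyset$ whenever $u,v\in V(\T)$ are incomparable in $\T$. Since the family $\{t_e\}_{e\in E(G)}$ is independent across edges and the seed indicators are independent across vertices, and since the two families are mutually independent by the construction in Section~\ref{sec:constrfpphe}, any two events measurable with respect to disjoint regions of $G$ are independent. This reduces the claim to a purely geometric statement.

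To construct $R(w)$, recall from Lemma~\ref{lemma:typical-likely} that $\mathcal{G}_1(x,y;\j\r)$ is measurable with respect to the passage times inside $\cyl^{(\eta\j\r)}_{x,y}$ (with $\eta$ as in \eqref{eq:defin-ETA}), while at scale $1$ the seed condition $\mathcal{G}_2(x,y)$ is measurable with respect to the seeds inside $\cyl^{(\varepsilon\r+\beta\r)}_{x,y}$. Choose a constant $C$ so that $\cyl^{(C\j\r)}_{x,y}$ contains both regions for every $\j\geq 1$, and set
\[
R(w) \;:=\; \bigcup\, \cyl^{(C\j\r)}_{x,y},
\]
where the union ranges over all ordered pairs $(x,y)\in V(\T)^2$ with $x$ a strict $\T$-ancestor of $y$, $\j:=d_\T(x,y)$, and $w=\u(x,\,d_\T(o,x)-3\alpha^2\eta\j)$. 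By construction, $\{w\text{ is removed}\}$ is determined by the restriction of the passage-time and seed data to $R(w)$.

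The crux is to prove $R(u)\cap R(v)=\emptyset$. Let $w_0$ be the meet of $u$ and $v$ in $\T$, and set $a:=d_\T(w_0,u)\geq 1$, $b:=d_\T(w_0,v)\geq 1$. For any $(x,y)$ contributing to $R(u)$ at scale $\j$, every $\T$-vertex $s$ on $\gamma_\T(x,y)$ is a descendant of $x$, hence of $u$, so the tree structure forces $d_\T(s,w_0)=d_\T(s,u)+a\geq 3\alpha^2\eta\j+a$; the analogous inequality holds for $(x',y')$ contributing to $R(v)$ at scale $\j'$. Applying Proposition~\ref{prop:ohshika-papad} to the bilipschitz embedding of Lemma~\ref{lemma:T-bilip-emb}, every $q\in\cyl^{(C\j\r)}_{x,y}$ lies within $G$-distance $C\j\r+\kappa+\alpha\r$ of some $\T$-vertex on $\gamma_\T(x,y)$ (the $\alpha\r$ absorbing the $G$-length of a single $\T$-edge). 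Combining these bounds with $d_\T(s,s')=d_\T(s,w_0)+d_\T(w_0,s')$ and the bilipschitz lower bound $d_G(s,s')\geq \alpha^{-1}\r\, d_\T(s,s')$, a putative common point $q\in R(u)\cap R(v)$ would force
\[
3\alpha\eta(\j+\j')\r+\alpha^{-1}(a+b)\r \;\leq\; C(\j+\j')\r+2\kappa+2\alpha\r,
\]
which fails once $\eta$ is chosen so that $3\alpha\eta-C$ exceeds $\alpha$ and $\r$ is taken sufficiently large.

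The main obstacle is calibrating the cushion $3\alpha^2\eta\j$ against the scale-dependent cylinder width $C\j\r$: both grow linearly in $\j$, and one must ensure dominance uniformly across all scales. The factor $\alpha^2$ in the cushion is precisely what is needed to compensate for the bilipschitz distortion when translating from $d_\T$ to $d_G$ via Lemma~\ref{lemma:T-bilip-emb}, while the choice of $\eta$ in \eqref{eq:defin-ETA} is made with exactly this constraint in mind. Once geometric disjointness is secured, independence follows instantly from the product structure of the underlying random families.
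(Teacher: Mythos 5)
Your argument is essentially the same as the paper's: both exploit the cushion $3\alpha^2\eta\j$ built into the removal rule together with the bilipschitz estimates from Lemma~\ref{lemma:T-bilip-emb} and Proposition~\ref{prop:ohshika-papad} to show that the data determining $\{u\text{ removed}\}$ and $\{v\text{ removed}\}$ live on disjoint regions; the paper states this as a direct lower bound $d_G(a,a')\geq\eta(\j+\j')\r$ for $a,a'$ on the axis geodesics, while you derive the equivalent inequality by contradiction from a putative common point $q$, but the underlying computation is the same. The one caveat is that $\eta$ is not a free parameter --- it is fixed by~\eqref{eq:defin-ETA} --- and your $C$ must also accommodate the scale-$1$ seed cylinder of radius $(\varepsilon+\beta)\r$ from~\eqref{eq:def-BETA}, so the requirement $3\alpha\eta-C>\alpha$ hinges on a comparison between $\beta$ and $\eta$ that you, like the paper's own proof, leave unverified.
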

\begin{proof}
Suppose that there is a scale $\j$ such that vertex $u$ is removed because of the existence of a bad cylinder $\cyl_{w,z}^{(\varepsilon \j \r)} $ at scale $\j$.
Note that in this case $u=\u(w,\i-3\alpha^2\eta\j)$ where $\i =d_\T (o, w)$.
Similarly, suppose that there is a scale $\j'$ such that vertex $v$ is removed because of the existence of a bad cylinder $\cyl_{w',z'}^{(\varepsilon \j' \r)} $ at scale $\j'$.
In this case $v=\u(w',\i'-3\alpha^2\eta\j')$ where $\i' =d_\T (o, w')$.

Now take two vertices $a,a'\in V(G)$ such that $a\in \gamma_{w,z}\subset \Gamma_{w,z}$ and $a'\in \gamma_{w',z'}\subset \Gamma_{w',z'}$.
We want to show that $d_G(a,a')\geq \eta\j\r+\eta\j'\r$.
In the following, let
\[
\begin{split}
& a_\T \text{  be a vertex in } V(\T) \text{ such that } d_G(a, a_\T)=d_G(a, \gamma_{\T}(w,z)), \text{ and }\\
& a'_{\T}\text{  be a vertex in } V(\T) \text{ such that } d_G(a', a_\T')=d_G(a', \gamma_{\T}(w',z')).
\end{split}
\]
Then, because of the bilipschitz embedding we deduce that
\begin{equation}\label{eq:dG(a,a')}
 \begin{split}
 d_G(a, a') 
 &\geq - d_G(a,a_{\T}) +d_G(a_{\T},a_{\T}')- d_G(a',a_{\T}')\\
&\geq - \kappa -\alpha\r/2 +\alpha^{-1}\r d_\T(a_{\T},a_{\T}')- \kappa  -\alpha\r  /2.
\end{split}
\end{equation}
The last inequality follows from Proposition \ref{prop:ohshika-papad} together with the bilipschitz embedding.
In fact, the factor $\kappa$ is an upper bound on the distance between $a$ (resp.\ $a'$) and the image of $\gamma_{w,z}$ (resp.\ $\gamma_{w',z'}$) on $V(\T_3)$, and the largest possible $d_G$-distance between any vertex of $V(\T_3)$ and $V(\T)$ is $\alpha\r /2$.
\nr{r:kappa-alpha}
At this point we can define a value $\ur{r:kappa-alpha}=\ur{r:kappa-alpha}(\kappa, \alpha)>0$ such that
\begin{equation}\label{eq:r-kappa-alpha}
\ur{r:kappa-alpha} \geq \frac{2\kappa}{2\alpha-1}.
\end{equation}
Subsequently, by taking $\r \geq \ur{r:kappa-alpha}$  we observe that relation \eqref{eq:dG(a,a')} is bounded from below by
\[
\begin{split}
- 2\kappa -\alpha\r +\alpha^{-1}\r d_\T(w,w')
& \geq - 2\kappa -\alpha\r +\alpha^{-1}\r (d_\T(w,u)+d_\T (v, w'))\\
& \geq - 2\kappa -\alpha\r +3\alpha \eta (\j+\j')\r\\
& \stackrel{\r\geq \ur{r:kappa-alpha} }{\geq }\eta (\j+\j')\r.
\end{split}
\]
This finishes the proof of the Claim.
\end{proof}

We now complete the proof of Proposition \ref{prop:existence_gpath}.
Now we bound the probability of finding any cutset of vertices that have been removed, separating the root from infinity.
For each $k\geq 2$ fixed, given a (fixed) minimal cutset $\Pi_k$, the probability that $\Pi_k$ consists only of removed vertices is bounded from above by
\[
\begin{split}
\P \left [ \Pi_k \text{ consists only of removed vertices} \right ] & \, \stackrel{\text{Claim }\ref{claim:indep-cutset} }{=} \, \prod_{\u \in \Pi_k} \P [\u \text{ is removed}]
 \leq \left ( e^{-\uc{c:C'''}}\right )^{|\Pi_k |}
 = e^{-\uc{c:C'''}k  }.
\end{split}
\]
By the union bound, together with Lemma \ref{lemma:nr-cutsets} one has that
\[
\begin{split}
\P &\left [ \exists \text{ a minimal cutset consisting of discarded vertices}\right ]\\
& \leq \sum_{k=1}^{\infty}
e^{-\uc{c:C'''}k }\cdot \left | \{\text{ minimal cutsets of cardinality }k\}\right |
 \stackrel{\text{Lemma }\ref{lemma:nr-cutsets} }{\leq }\sum_{k=1}^{\infty}e^{-\uc{c:C'''}k }\cdot 4^{k-1}.
\end{split}
\]
\nc{c:Cv}
Now fix a large constant $\uc{c:Cv}$ so that, whenever $\uc{c:C'''}$ is large enough, we have
$
\sum_{k=1}^{\infty}e^{-\uc{c:C'''}k  }\cdot 4^{k-1}<e^{-\uc{c:Cv}}<1/1000.
$
This shows that with probability at least $1-e^{-\uc{c:Cv}}>999/1000$, there is at least one good path covered by good cylinders at all scales.
This concludes the proof of Proposition \ref{prop:existence_gpath}.
\end{proof}

\section{Survival of $\Fo$ (Proof of Theorem \ref{thm:survival_FPP_1})}\label{sect:survival_FPP_1}
Recall the definition of a good $\T$-path from \eqref{eq:def-gd-path}.
In this section we show that if we have a good $\T$-path $\gpath$ containing $o$ on $\T$ (by Proposition \ref{prop:existence_gpath} this event occurs with probability bounded away from $0$), then $\Fo$ survives indefinitely with positive probability. 
Recall the notation introduced in \eqref{eq:V(gpath)}, namely 
$
V(\gpath) = \left \{v\in V(G) \ : \ v\in \left ( \gpath \cup V\left ( E(\gpath )\right ) \right ) \right \}.
$
To achieve our goal, we make the following assumptions and, by \emph{contradiction}, we show that the presence of a good path makes it impossible for $\Fl$ to surround $\Fo$.

\paragraph{Assumptions.} 
Recall the definition of the constant $\beta$ from \eqref{eq:def-BETA}.
We assume the following conditions:
\begin{itemize}
\item[(A.1)] There is an infinite good $\T$-path $\gpath $ on $\T$ containing the origin $o$.
Note that since $\gpath $ is good we have (cf.\ Section \ref{sect:scale_1})
\[
\left \{ \bigcup\nolimits_{v\in V(\gpath) } B_G \left (v, \beta\r+\varepsilon\r \right )\right \}\cap \{\text{seeds}\}= \emptyset.
\]
\item[(A.2)] There is a vertex $y$ on $ \gpath$ such that $\Fo$ started from $o$ activates a seed $\s\in V(G)$ and the $ \Fl$ originated at $\s$ occupies a vertex in $N(y)$.
From now on, $y$ will denote the first (in time) vertex of $\gpath$ satisfying this assumption.
\end{itemize}

\begin{Definition}\label{def:FPPHE-geod}
From now on, with the terminology ``\emph{a geodesic of FPPHE}'' we mean the following.
For each vertex draw an oriented edge toward the neighbor from which it got occupied (by $\Fo$ or $\Fl$), then each vertex will have an oriented path to the origin and the collection of oriented edges forms a tree.
A geodesic of FPPHE is defined as an oriented path in this tree.
Note that a vertex $x$ will be occupied by $\Fo$ if there is no seed on the oriented path from $x$ to $o$, whereas it will be occupied by $\Fl$ otherwise.
\end{Definition}

\paragraph{Idea of the proof.}
%
Fix the good path $\gpath$.
If (A.2) occurs for some $y\in \gpath$, then there must be a vertex $x\in \gpath$, such that
\begin{equation}\label{eq:find-x}
\begin{split}
 & d_\T (o,x)<d_\T(o,y), \\
& \text{there is a geodesic of FPPHE from $N(x)$ to $N(y)$}, \quad \text{ and }\\
& \text{a geodesic of FPPHE between }N(x)\text{ and }N(y)\text{ passes through }\s.
\end{split}
\end{equation}
If there is more than one such $x$, we consider the one which is closest to $y$.
\begin{Remark}
Note that $y$ cannot coincide with the root $o$.
In fact, assumption (A.1) rules out the possibility that $\Fo$ finds a seed 
before having completely occupied $N(o)$.
More precisely, since $\varepsilon$ is much smaller than $\beta$, by the time $\Fo$ reaches a  seed, the set $N(o)$ will be completely occupied by $\Fo$, giving no chance to $\Fl$ to ever reach it.
The same reasoning shows that in general $x\neq y$.
\end{Remark}
We will show that if (A.2) occurs, then by definition the cylinder $\cyl^{(\varepsilon d_{\T}(x,y)\r)}_{x,y}$ has to be bad, which contradicts (A.1), as $x,y\in \gpath$.

Now we are ready to give a proof of Theorem \ref{thm:survival_FPP_1}.
\begin{proof}[Proof of Theorem \ref{thm:survival_FPP_1}]
First, since Lemma \ref{lemma:typical-FPP} holds for some values  $\cin\in (0,1)$ small enough, and $\cout>0$ large enough, we can safely  assume that
$\cout > \alpha \max \{\lambda^{-1}, \lambda\}=\alpha \frac{\max\{1, \lambda\}}{\min\{1, \lambda\}}. 
$
Moreover, we choose $\varepsilon'=\varepsilon'(\alpha, \cout, \lambda)>0$ so small, that for all $\varepsilon<\varepsilon' $ we have 
\begin{equation}\label{eq:assumpt-eps}
\varepsilon<\min \left \{\frac{1}{\alpha}, \left (\frac{1}{2\alpha\cout \max \left \{ 1, \lambda\right \} }\right )^2\right \}.
\end{equation}
Recall the definition of $\cyl\bigl (N(x)\bigr )$ from \eqref{eq:CNx-CNy}.
An immediate consequence of Lemma \ref{lemma:close-geodesics} is that
\[
\begin{split}
& \forall \, \o\in \partial N(x), \, \ent\in \partial N(y),\text{ all }  \gamma_{\o, \ent}\in \Gamma_{\o, \ent} \text{ are contained inside }
\cyl^{(\varepsilon \r)}_{x,y}\cup \cyl (N(x))\cup \cyl (N(y) ).
\end{split}
\]

Now suppose that assumptions (A.1) and (A.2) both hold.
From now on, we will denote by $y$ the vertex in $ \gpath$ that satisfies Assumption (A.2) and by $x$ the one satisfying \eqref{eq:find-x}, and let $\j=d_{\T}(x,y)$.
Observe that \eqref{eq:assumpt-eps} together with the fact that $d_G(x,y)\in [\alpha^{-1}\j\r, \alpha\j\r]$ imply
\[
\frac{d_G(x,y)}{\cout\max \left \{ 1, \lambda\right \}}\geq \frac{\alpha^{-1}}{\cout \max \left \{ 1, \lambda\right \}}\j\r \geq 2\varepsilon^{1/2} \j\r >\varepsilon^{1/2} \j\r.
\]
Using a similar reasoning we obtain
\[
\frac{d_G(x,y)}{\cin \min\{1, \lambda\}}\leq  \frac{\alpha\j\r }{\cin \min\{1, \lambda\}}\leq \frac{\alpha \max \{1, \lambda\} \j\r }{\cin \min\{1, \lambda\}} \leq \frac{\cout }{\cin}\j\r \leq 4\frac{\cout}{\cin^2}\j\r.
\]
We will split the remaining part of the proof of Theorem \ref{thm:survival_FPP_1} into two cases, according to whether $\j$ is small or large.
The value that separates the two cases is an arbitrary value which simplifies the computations, but has no interpretation in the description of the model.

\paragraph{First case: 
$1 \leq \j\leq \left \lceil  \min \{\lambda, \lambda^{-1}\}\frac{\cin}{\cout}\frac{\beta}{\alpha} - 3 - \varepsilon \right \rceil $.}
In this case we will show that before $\Fo$ can move from $N(x)$ to activate a seed $\s$, it already goes from $N(x)$ to $N(y)$.
From Assumption (A.1) it follows that the cylinder $\cyl_{x,y}^{(\varepsilon \j \r)} $ is good, and 
thus any path (in particular a geodesic of $\Fo$) from any fixed vertex $\o \in N(x)$ to a seed $\s$ takes time bounded from below by
\begin{equation}\label{eq:min-1st-case}
\frac{\beta}{\max\{1, \lambda\}\cout}\r.
\end{equation}
Now we consider an upper bound for any $\Fo$ geodesic that passes close to the graph geodesics.
Note that by definition
$\min \{\lambda, \lambda^{-1}\}=\frac{\min\{1,\lambda\}}{\max \{1,\lambda\}}.$
Once again we exploit Proposition \ref{prop:ohshika-papad} and Lemma \ref{lemma:close-geodesics}, which guarantee that the $d_\T$-geodesic $\gamma_{\T}(x,y)$ is contained in $\cyl_{x,y}^{(\varepsilon \r)}$.
The maximum distance between any vertex of $N(x)$ and any vertex of $N(y)$ is at most 
\[
\alpha \j \r +2 \left \lceil \alpha \frac{\r}{2}\right \rceil +  \varepsilon \r \leq \alpha \j \r + \alpha \r+2+  \varepsilon \r\leq \alpha\j\r +2\alpha \r +\varepsilon \r,
\]
and using the range limitation of $\j$ we obtain
\[
\alpha \j \r +2\alpha \r + \varepsilon \r \stackrel{\text{(1st case)}}{\leq} \alpha\left ( \min \left \{ \lambda, \lambda^{-1}\right \}\frac{\cin}{\cout} \frac{\beta}{\alpha} -2-\varepsilon\right )\r +2\alpha \r + \varepsilon \r
\leq  \alpha\frac{\min\{1,\lambda\}}{\max \{1,\lambda\}}\frac{\cin}{\cout} \frac{\beta}{\alpha} \r.
\]
Otherwise stated, we have
\[
\sup_{\o \in N(x), \ent \in N(y)} d_G(\o ,\ent )\leq  \frac{\min\{1,\lambda\}}{\max \{1,\lambda\}}\frac{\cin}{\cout} \beta \r.
\]
Now we consider a path from $\o$ to $\ent$ inside $\cyl_{x,y}^{(\varepsilon \r)}\cup N(x)\cup N(y)$ as a concatenation of shorter  paths of length between $\sqrt{\varepsilon}\cout \r $ and $4 \frac{\cout ^2}{\cin^2}\r$ which we refer to as \emph{sub-paths}.
Note that these sub-paths are \emph{good} in the sense of part (b) in the definition of $\mathcal{G}_1(x,y;\r)$, cf.\ \eqref{eq:def-G-hat}.
Hence, their passage time is at most their length multiplied by $(\cin \min\{1, \lambda\})^{-1}$.
Thus their concatenation will give that disregarding the interaction with $\Fl$, $\Fo$ goes from $\o$ to $\ent$ in time at most
\[
\frac{1}{\cin \min\{1,\lambda\}} \frac{\min\{1,\lambda\}}{\max \{1,\lambda\}}\frac{\cin}{\cout} \beta \r
= \frac{1}{\max\{1, \lambda\}\cout} \beta \r.
\]
Thus, by comparing this result with what we found in \eqref{eq:min-1st-case}, we see that this implies that $\Fo$ manages to occupy $ N(y)$ before it manages to activate a seed.
Thus in this case the proof is concluded.

\paragraph{Second case: $\j > \left \lceil \min \{\lambda, \lambda^{-1}\}\frac{\cin}{\cout}\frac{\beta }{\alpha}- 3 - \varepsilon \right \rceil $.}
It is easy to verify from 
\eqref{eq:def-BETA} that this necessarily implies $\j\geq 3$.
This case corresponds to the situation where the path leaves from $N(x)$, stays completely outside of at least $\j-2$ sets $N(z)$ for some $z\in\gpath$ such that $d_G(o,x)<d_G(o,z)<d_G(o,y)$, and subsequently enters $\cyl^{(\varepsilon \r/2)}_{o,\infty}$ at $N(y)$.

We start by computing an upper bound on the time of all quickest geodesic paths (with respect to FPPHE, recall Definition \ref{def:FPPHE-geod}).
A geodesic path starting at some vertex $w\in N(x)$ and ending at some vertex $w' \in N(y)$ is completely contained inside the set $\cyl^{(\varepsilon \r/2)}_{x,y}\cup \cyl (N(x))\cup \cyl (N(y) )$ by Lemma \ref{lemma:close-geodesics}, and thus it has length bounded from above by
\[
d_G(w,x)+d_G(x,y)+d_G(y,w')\leq 
\left (\frac{\varepsilon\r}{2}+\left \lceil \frac{\alpha \r}{2}\right \rceil \right )+\alpha \j\r +\left ( \frac{\varepsilon\r}{2}+\left \lceil \frac{\alpha \r}{2}\right \rceil \right ).
\] 
\nr{r:one-more}
For $\varepsilon>0$ satisfying \eqref{eq:assumpt-eps}, we can take $\ur{r:one-more}=\ur{r:one-more}(\varepsilon)>0$ so large that for all $\r\geq \ur{r:one-more}$ we have
$\varepsilon\r+2\leq \r.$
Thus for all $\j\geq 1$ (and therefore for all $\j \geq \lceil 2\alpha (1+\alpha)\rceil $) we obtain 
\[
\varepsilon\r+\alpha \j\r +2\left \lceil \frac{\alpha \r}{2}\right \rceil \leq \varepsilon\r+\alpha\j\r +(\alpha\r+2)\leq 4\frac{\alpha}{\cin}\j\r.  
\]
Stated in a more clear way, this is saying that
\begin{equation}\label{eq:all-scale-1-good}
\sup_{\o \in N(x), \ent\in N(y)}d_G(\o , \ent)\leq 4\frac{ \alpha}{\cin}\j\r.
\end{equation}
Recall the definition of the event $\mathcal{G}_1(x,y;\j\r)$, then we proceed as in the first case, namely we consider sub-paths inside $\cyl_{x,y}^{\varepsilon\r/2}\cup N(x)\cup N(y)$.
Then, it follows that disregarding the interactions with $\Fl$, $\Fo$ goes from $N(x)$ to $N(y)$ in time at most
\begin{equation}\label{eq:up-bound-geod}
\frac{1 }{\min \{1, \lambda\}\cin }\left ( \varepsilon\r+2+\alpha \r+\alpha \j\r \right )\leq 4\frac{ \alpha}{\min \{1, \lambda\}\cin^2}\j\r.
\end{equation}
By construction, this is an upper bound on the time needed by $\Fo$ started in $N(x)$ to completely occupy the set $\cyl^{(\varepsilon\r/2)}_{x,y}\cup  N(x) \cup N(y)$.

Now we proceed with a lower bound on the time needed to any geodesic path of FPPHE that avoids $\j-2$ regions $N(\cdot)$ to go from $N(x)$ to $N(y)$.
By Lemma \ref{lemma:close-geodesics}, for any $\o\in N(x)$ and $\ent\in N(y)$ we have that $\o$ and $\ent$ are connected by a geodesic that is completely contained inside $\cyl^{(\varepsilon \r/2)}_{x,y}\cup \cyl^{(\varepsilon \r/2)}_{\o, x} \cup \cyl^{(\varepsilon \r/2)}_{y, \ent} $.
Now, relation \eqref{eq:bilipschitz-tree} implies that $\alpha^{-1}\j \r$ is the smallest possible $d_G$-distance between $x$ and $y$.
Thus, by construction, the $d_G$-distance between $N(x)$ and $N(y)$ is bounded from below by 
\[
\alpha^{-1}\j \r -2\left \lceil \alpha\frac{\r}{2}\right \rceil-\varepsilon \r\geq \alpha^{-1}\j \r -2\alpha\r-\varepsilon \r-2.
\]
\nr{r:new-one-1}
Define $\ur{r:new-one-1}:=\ur{r:new-one-1}(\alpha, \varepsilon):=2/(\alpha\varepsilon) $ and let $\r \geq \ur{r:new-one-1}$.
By using the lower bound on $\j $, we deduce 
\[
\begin{split}
\alpha^{-1}\j \r & -2\alpha\r-\varepsilon \r -2 \geq \alpha^{-1} \left ( \min \{\lambda, \lambda^{-1}\}\frac{\cin}{\cout}\frac{\beta }{\alpha}- 3 - \varepsilon\right ) \r -2\alpha\r-\varepsilon \r-2\\
& \stackrel{\eqref{eq:def-BETA} }{\geq }\alpha^{-1} \left [ \min \{\lambda , \lambda^{-1}\}\frac{\cin}{\cout}\left ( (6+\varepsilon)(1+\alpha)\alpha \frac{\cout}{\cin }\max\{\lambda,\lambda^{-1}\}\right ) - 3 - \varepsilon\right ] \r -2\alpha\r-\varepsilon \r-2\\
& = \alpha^{-1} \left ( (6+\varepsilon)(1+\alpha)\alpha - 3 - \varepsilon\right ) \r -2\alpha\r-\varepsilon \r-2\\
& \geq \left [(6+6\alpha +\varepsilon+\varepsilon \alpha)-3 \alpha^{-1}-\varepsilon\alpha^{-1}-2\alpha-\varepsilon\right ]\r -2\\
& \stackrel{\varepsilon<1 }{\geq }\left [ 6+4 \alpha + \varepsilon\alpha-4 \alpha^{-1}\right ]\r-2 \ \stackrel{\alpha \geq 1}{\geq } \ (6 + \varepsilon\alpha)\r -2 \ \stackrel{ \r \geq \ur{r:new-one-1}}{ \geq } \  6\r.
\end{split}
\]
\nr{r:maybe-last} 
\nr{r:another-1}
Now, by simply choosing $\ur{r:maybe-last}=\ur{r:maybe-last}(\delta, \varepsilon)$ so large that for all $\r \geq \ur{r:maybe-last}$ we have
$6 \r \geq \max \{50 , 50 \delta\}$ and $\varepsilon \r\geq  9\delta.$
It follows that $d_G(\o,\ent) \geq \max \{50 , 50 \delta\}$, and therefore we are in the conditions to apply Proposition \ref{prop:detour_cylinders} with $L=\varepsilon \r$.
As a consequence, the length of the detour that the geodesic of FPPHE makes is bounded from below by $(\alpha^{-1}\j \r -2\alpha \r-\varepsilon \r-2)\delta 2^{\varepsilon\r/(4\delta)}$ which, for $\r$ large enough, is much larger than 
$\frac{4\cout^2 \j \r}{\cin^2}$. Then, since $\cyl_{x,y}^{(\varepsilon \j \r)}$ is good, any path from $N(x)$ of length $\frac{4\cout^2 \j \r}{\cin^2}$ incurs a passage time of at least 
$$
   \frac{1}{\max\{1,\lambda\}}\left(\frac{4\cout^2 \j \r}{\cin^2}\right) \frac{1}{\cout} > \frac{4\alpha \j \r}{\min\{1,\lambda\} \cin^2}.
$$
So, by comparing with \eqref{eq:up-bound-geod}, it becomes clear that the passage time along this path takes longer than the time $\Fo$ takes to go from anywhere in $N(x)$ to anywhere in $N(y)$, establishing the desired contradiction and completing the proof of Theorem \ref{thm:survival_FPP_1}.
\end{proof}

\section{Survival of $\Fl$ (Proof of Theorem \ref{thm:survival_FPP_lambda})}\label{sect:survival_FPP_lamda}
%
Here we show that with positive probability $\Fl$ will be able to ``escape'' towards infinity, before $\Fo$ can reach it and surround it.
We will show that the event ``$\Fl$ survives indefinitely'' occurs with probability one.

\subsection{Step 1: Construction of a quasi-geodesic infinite ray}
Let $t>0$ be a fixed integer, and start $\Fo$ from the origin $o$.
Since we are dealing with the two processes $\Fo$ and $\Fl$, we need to define the set of vertices of the graph that have been reached by either process by time $t$.
We set
\[
\Aol_t:=\{v\in V(G) \ : \ v\text{ has been occupied by }\Fo \text{ or }\Fl \text{ by time }t\},
\]
where if $v$ hosts a seed then it is included in $\Aol_t$ only if the seed has already been activated by time $t$.
Recall that $\partial \Aol_t$ denotes the internal boundary of $\Aol_t$.
At first sight, it is tempting to believe that, in some cases (for example when $G$ is vertex-transitive), 
one could have that for every $t\geq 0$ the  vertex $x \in \partial \Aol_t$ that is the furthest from $o$ lies on an infinite geodesic ray (w.r.t.\ the graph metric) which intersects $\Aol_t$ only at $x$.
But this statement is not necessarily true even when $G$ is vertex-transitive and $\Aol_t$ is close to a ball;
the interested reader is referred to \cite{BrieusselGournay} and references therein.
%
We now prove a technical result, based on \cite[Lemma 2]{Bogopolski}, that allows us to overcome this problem.
\begin{Lemma}\label{lemma:bog}
   Let $G$ be a hyperbolic graph.
   Fix two integers $\sigma\geq 8\delta+1$ and $s\geq \sigma$, and take any vertex $x\in V(G)$ of distance at least $\sigma$ from $o$.
   Let $x_1\in \gamma_{x,o}$ be the vertex at distance $\sigma$ from $x$.
   Then, any vertex $y\in B_G(x,s)$ such that
   \begin{equation}\label{eq:reqy}
      d_G(o,y) < d_G(o,x) +d_G(x,y)-2\sigma
   \end{equation}
   belongs to $B_G(x_1,s-\sigma+4\delta)$.
\end{Lemma}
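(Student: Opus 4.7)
The plan is to use the $\delta$-thin triangle property applied to the geodesic triangle with vertices $o$, $x$, $y$. Since $x_1$ lies on the geodesic $\gamma_{x,o}$, thinness provides a vertex $z$ on one of the other two sides within distance $\delta$ of $x_1$, and a short case analysis then pins $y$ close to $x_1$.

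More concretely, I would fix geodesics $\gamma_{x,o}$, $\gamma_{x,y}$, $\gamma_{y,o}$ and apply~\eqref{eq:def-hyperbolic} to $x_1 \in \gamma_{x,o}$, obtaining $z \in \gamma_{x,y} \cup \gamma_{y,o}$ with $d_G(x_1,z)\leq \delta$. In the first case $z \in \gamma_{x,y}$, so $d_G(x,z) \in [\sigma - \delta,\sigma + \delta]$ by the triangle inequality and the fact that $d_G(x,x_1)=\sigma$. Since $z$ sits on $\gamma_{x,y}$ one has $d_G(z,y)=d_G(x,y)-d_G(x,z) \leq d_G(x,y) - \sigma + \delta \leq s - \sigma + \delta$, whence $d_G(x_1,y) \leq d_G(x_1,z) + d_G(z,y) \leq s - \sigma + 2\delta$, which is within the claimed bound.

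The subtler case is $z \in \gamma_{y,o}$; here I cannot immediately discard this possibility from the hypothesis~\eqref{eq:reqy}, so instead I would use~\eqref{eq:reqy} to bound $d_G(z,y)$ directly. The key identity is $d_G(o,y) = d_G(o,z) + d_G(z,y)$, combined with the lower bound $d_G(o,z) \geq d_G(o,x)-d_G(x,z) \geq d_G(o,x) - \sigma - \delta$ coming from $d_G(x,z) \leq d_G(x,x_1)+d_G(x_1,z) \leq \sigma + \delta$. Rearranging and plugging in the assumption $d_G(o,y) < d_G(o,x) + d_G(x,y) - 2\sigma$ yields
\[
d_G(z,y) = d_G(o,y) - d_G(o,z) < d_G(x,y) - \sigma + \delta \leq s - \sigma + \delta,
\]
so again $d_G(x_1,y) \leq \delta + d_G(z,y) < s - \sigma + 2\delta \leq s - \sigma + 4\delta$, as required.

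Neither step looks like a substantial obstacle; the only genuinely delicate point is recognizing that the assumption~\eqref{eq:reqy} is designed precisely to make the $\gamma_{y,o}$ side ``short enough beyond $z$'' so that $y$ still lies within the desired ball around $x_1$. The margin $4\delta$ in the conclusion (rather than the $2\delta$ actually obtained) provides comfortable slack, and the hypothesis $\sigma > 4\delta$ is only used implicitly to guarantee the estimates on $d_G(x,z)$ remain meaningful (e.g.\ $z$ is well-separated from $x$). Note that vertex transitivity plays no role in this argument, only hyperbolicity; transitivity is a standing assumption of the section relevant to the subsequent steps.
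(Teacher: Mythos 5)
Your argument is correct, and in fact tighter than the paper's (you get $s-\sigma+2\delta$ rather than $s-\sigma+4\delta$). But the route is genuinely different. The paper first isolates an auxiliary result (its Lemma~\ref{lemma:defhyp}): for a geodesic triangle $\{a,b,c\}$ and points $B\in\gamma_{a,b}$, $C\in\gamma_{a,c}$ at the same distance from $a$, with that distance at most half the Gromov excess $\tfrac12\bigl(d_G(a,b)+d_G(a,c)-d_G(b,c)\bigr)$, one has $d_G(B,C)\leq 4\delta$. It then applies this with $a=x$, $b=o$, $c=y$, $B=x_1$, and $C$ the point on $\gamma_{x,y}$ at distance $\sigma$ from $x$, immediately getting $d_G(x_1,C)\leq 4\delta$ and concluding by $d_G(C,y)=d_G(x,y)-\sigma\leq s-\sigma$. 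You instead apply $\delta$-thinness directly to $x_1$, obtaining a nearby $z$ on $\gamma_{x,y}\cup\gamma_{o,y}$ and running a two-case analysis. The delicate case is exactly $z\in\gamma_{o,y}$, where you correctly observe that the hypothesis~\eqref{eq:reqy}, plugged into the identity $d_G(z,y)=d_G(o,y)-d_G(o,z)$ together with $d_G(o,z)\geq d_G(o,x)-\sigma-\delta$, forces $d_G(z,y)<d_G(x,y)-\sigma+\delta\leq s-\sigma+\delta$. Both proofs are sound; the paper's factoring into Lemma~\ref{lemma:defhyp} packages a reusable ``same-distance projections onto two sides are $4\delta$-close when below the Gromov product'' fact, while your version is shorter, more elementary, and avoids the case-splitting inside that lemma at the cost of doing a (lighter) case split in the main proof. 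You are also right that neither vertex transitivity nor the bound $\sigma>4\delta$ is actually used in this step.

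One small wording nit: you say the hypothesis $\sigma>4\delta$ ensures $z$ is ``well-separated from $x$,'' but nothing in your argument requires that; the hypothesis simply isn't invoked in either proof. Also, you do not need the full two-sided bound $d_G(x,z)\in[\sigma-\delta,\sigma+\delta]$ in the first case — only the lower bound $d_G(x,z)\geq\sigma-\delta$ is used.
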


Before proceeding to the proof of the above lemma, we will need another technical result.
\begin{Lemma}\label{lemma:defhyp}
   Let $G$ be a hyperbolic graph. 
   Take any geodesic triangle $\{a,b,c\}$ in $G$.
   Then, for all vertices $B,C\in V(G)$ with $B\in \gamma_{a,b}$ and $C\in \gamma_{a,c}$ such that
   \begin{equation}\label{eq:bog}
   d_G(a,B)=d_G(a,C)\leq \frac{1}{2} \left ( d_G(a,b)+d_G(a,c)-d_G(b,c)\right ), 
   \end{equation}
   we have that 
   $d_G(B,C)\leq 4 \delta.$
\end{Lemma}
\begin{proof}
   We have to split the proof of this fact into two cases, which are drawn in Figure \ref{fig:bog}:
   \begin{itemize}
   \item[(i)] The vertices $B$ and $C$ satisfy \eqref{eq:bog} and 
   $
   d_G( B, \gamma_{b,c})> \delta  \text{ or }  d_G( C, \gamma_{b,c})> \delta .
   $
   \item[(ii)] The vertices $B$ and $C$ satisfy \eqref{eq:bog} and 
   $
   d_G( B, \gamma_{b,c})\leq \delta  \text{ and }  d_G( C, \gamma_{b,c})\leq \delta .
   $
   \end{itemize}
   
   \begin{figure}[h!]\label{fig:bog}
   \begin{center}
   \includegraphics[scale=.9]{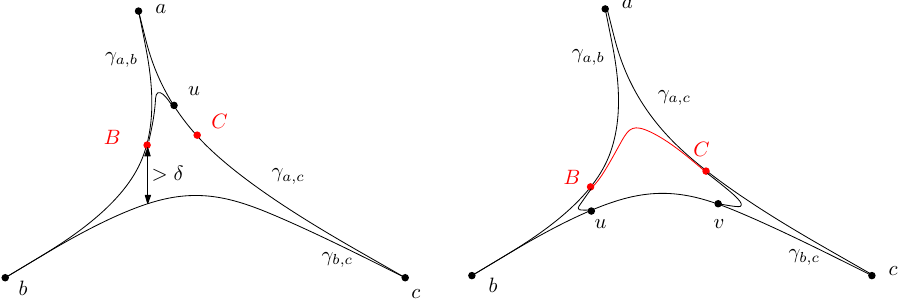}
   \caption{A representation of case (i) (on the left) and case (ii) (on the right) of Lemma \ref{lemma:defhyp}.}
   \end{center}
   \end{figure}
   
   \emph{Case (i)}. Assume that $d_G( B, \gamma_{b,c})> \delta $ (the proof if $d_G( C, \gamma_{b,c})> \delta$ is symmetric).
   Since $G$ is $\delta$-hyperbolic, then there must be a vertex $u\in \gamma_{a,c}$ such that $d_G(u, B)\leq \delta$.
   The triangle inequality implies that 
   \begin{equation}\label{eq:bog-aux1}
    d_G( a,C)\stackrel{ \eqref{eq:bog}}{=}d_G( a,B)\leq d_G(a,u)+d_G(u, B)\leq d_G(a,u)+\delta.
   \end{equation}
   If $d_G(a,u)\leq d_G(a,C)$, then
   \[
   d_G(B,C)\leq d_G(B,u)+d_G(u,C)\leq \delta +(d_G(a,C)-d_G(a,u))\stackrel{\eqref{eq:bog-aux1} }{\leq }2\delta.
   \]
   If $d_G(a,C)\leq d_G(a,u) $, then
   \[
   \begin{split}
   d_G(B,C) & \leq d_G(B,u)+d_G(u,C) \leq \delta +d_G(u,C)\\
   & = \delta + \left (d_G(a,u)-d_G(a,C)\right ) \stackrel{\eqref{eq:bog} }{= }\delta + d_G(a,u)-d_G(a,B) \\
   & \leq \delta +\left ( d_G(a,B)+d_G(B,u)\right )-d_G(a,B) \leq 2\delta.
   \end{split}
   \]
   Thus the lemma holds in case (i).
   
   \noindent
   \emph{Case (ii)}. By assumption, there are vertices $u, v\in \gamma_{b,c}$ such that
   $d_G(B,u)\leq \delta$ and $d_G(C,v)\leq \delta$.
   First we consider the case $d_G(b,u)\leq d_G(b,v)$, for which  $d_G(b,c) =d_G(b,u)+d_G(u,v)+d_G(v,c)$.
   Relation \eqref{eq:bog} clearly implies that
   \[
   d_G(a,B)+d_G(a,C)=2d_G(a,B)\leq  d_G(a,b)+d_G(a,c)-d_G(b,c), 
   \]
   that is
   \begin{equation}\label{eq:bog-aux2}
   d_G(b,c)\leq d_G(a,b)+d_G(a,c)- (d_G(a,B)+d_G(a,C)).
   \end{equation}
   Now, since  $u,v\in \gamma_{b,c}$ and $B\in \gamma_{a,b} $ and $C \in \gamma_{a,c}$ we have
   \[
   \begin{split}
   d_G(b,c) & =d_G(b,u)+d_G(u,v)+d_G(v,c)
   \stackrel{\eqref{eq:bog-aux2} }{\leq }d_G(a,b)+d_G(a,c)- (d_G(a,B)+d_G(a,C))\\
   & \leq d_G(a,b)+d_G(a,c)-\left ( d_G(a,b)-d_G(b,B)\right )- \left (d_G(a,c)-d_G(c,C) \right )\\
   & = d_G(b,B) + d_G(c,C) .
   \end{split}
   \]
   Again using the definition of $u$ and $v$ we find
   \[
   d_G(b,B)\leq d_G(b,u)+d_G(u,B) \ \text{ and } \ d_G(c,C)\leq d_G(c,v)+d_G(v,C).
   \]
   Therefore we obtain:
   \[
   d_G(b,c) =d_G(b,u)+d_G(u,v)+d_G(v,c) \leq d_G(b,u)+d_G(u,B) + d_G(c,v)+d_G(v,C),
   \]
   which in turn implies 
   $
   d_G(u,v)\leq d_G(u,B) +d_G(v,C)\leq 2\delta.
   $
   At this point we obtain
   \[
   d_G(B,C)\leq d_G(u,B) +d_G(v,C)+d_G(u,v)\leq 4\delta,
   \]
   showing that the lemma holds in case (ii) when $d_G(b,u)\leq d_G(b,v)$.
   
   If $d_G(b,v)< d_G(b,u)$ then we have two possible situations. 
   Either $d_G(B,v)\leq \delta$, in which case $d_G(B,C)\leq d_G(B,v)+d_G(v,C)\leq 2\delta$, or $d_G(B,v)> \delta$.
   In the latter case we observe that if $d_G(v,u)\leq 2\delta$ we are fine since $d_G(B,C)\leq 4\delta$ follows as above.
   So, the only problematic case left to analyze is
   \begin{equation}\label{eq:delta2delta}
   d_G(B,v)> \delta \text{ and } d_G(v,u) > 2\delta.
   \end{equation}
   In this situation we see that
   $
   d_G(b,u)=d_G(b,v)+d_G(v,u)\stackrel{\eqref{eq:delta2delta} }{>}d_G(b,v)+2\delta.
   $
   On the other hand, by the triangle inequality
   \[
   d_G(b,u)\leq d_G(b,B)+d_G(B,u)\leq d_G(b,B)+\delta,
   \]
   and these two inequalities together imply
   \begin{equation}\label{eq:dbb-delta}
   d_G(b,v)<d_G(b,B)-\delta.
   \end{equation}
   Now, considering the triangle $\{a,b,v\} $  we have
   \begin{equation}\label{eq:dav-low}
   d_G(a,v)\geq d_G(a,b)-d_G(b,v)\stackrel{\eqref{eq:dbb-delta} }{>}d_G(a,b)-d_G(b,B)+\delta=d_G(a,B)+\delta.
   \end{equation}
   Using the triangle inequality we see that
   \begin{equation}\label{eq:dav-up}
   d_G(a,v)\leq d_G(a,C)+d_G(C,v)\leq d_G(a,C)+\delta = d_G(a,B)+\delta.
   \end{equation}
   Comparing \eqref{eq:dav-low} and \eqref{eq:dav-up} we obtain a contradiction, showing that the two conditions in \eqref{eq:delta2delta} are not compatible in the current setting, meaning that at least one between $d_G(B,v)\leq  \delta$ and $d_G(v,u) \leq 2\delta$ has to hold.
   This completes the proof of the lemma in Case (ii).
\end{proof}

\begin{proof}[Proof of Lemma~\ref{lemma:bog}]
   Given Lemma~\ref{lemma:defhyp}, the proof is a simple generalization of that of \cite[Lemma 2]{Bogopolski}, but we include it for the sake of completeness.
   Let $x$ be any fixed vertex and $s$ as in the statement.
   Take an element $y\in B_G(x, s)$ and suppose that $y$ satisfies~\eqref{eq:reqy}, namely that
   \begin{equation}\label{eq:no-bog}
   d_G(o,y) < d_G(o,x) +d_G(x,y)-2\sigma.
   \end{equation}
   Recall $x_1\in \gamma_{o,x}$ in the statement and let $z\in \gamma_{x,y}$ be such that
   $
   d_G(x,z)=d_G(x,x_1)=\sigma.
   $
   By \eqref{eq:no-bog} it follows that
   \[
   \frac{1}{2}\left ( d_G(o,x) +d_G(x,y)-d_G(o,y) \right )>\sigma =d_G(x,x_1)=d_G(x,z).
   \]
   Using Lemma~\ref{lemma:defhyp} with $a=x, b=o, c=y, B=x_1 $ and $C=z $,  we obtain that $d_G(x_1,z)\leq 4\delta$.
   At this point we have
   \[
   \begin{split}
   d_G(x_1,y) \leq d_G(x_1,z)+d_G(z,y)=4\delta+(d_G(x,y)-d_G(x,z))
   = 4\delta +s-\sigma.
   \end{split}
   \]
   Thus $y\in B_G(x_1,s-\sigma+4\delta)$.
%
%
\end{proof}
\begin{Remark}\label{rem:after-bog}
Note that from the proof of Lemma \ref{lemma:bog} and the fact that $\sigma\geq 8\delta+1$ we can deduce that there is at least one element $y\in \partial B_G(x,s)$ such that $d_G(o,y) \geq d_G(o,x) +d_G(x,y)-(16 \delta+2)=d_G(o,x) +s-(16 \delta+2)$.
In fact, all elements that satisfy \eqref{eq:reqy} are contained inside $B_G(x_1,s-\sigma+4\delta)$, but $\partial B_G(x,s)\not \subset B_G(x_1,s-\sigma+4\delta) $.
%
As we mentioned in the introduction, our results do hold for trees in a trivial way, hence we assume directly that $\delta\geq 1$.
Thus, the above implies that there is at least one element $y\in \partial B_G(x,s)$ such that 
\[
d_G(o,y) \geq d_G(o,x) +s-{18} \delta.
\]
\end{Remark}

Now we will use Lemma \ref{lemma:bog} and Remark \ref{rem:after-bog} inductively in order to find an infinite ray that will progressively go far away from the origin.
We will henceforth assume that $G$ is a vertex-transitive graph. 
%
This is needed to obtain an upper bound that is independent of $x$ on the number of vertices $y$ satisfying the requirement of Lemma~\ref{lemma:bog}. Such bound is at most $|B_G(x_1,s-4\delta)|$.

Fix  a finite value $R_1>0$ that is large enough with respect to $\delta$, $\lambda$, $\cin$ and $\cout$; in particular, $R_1$ will be large enough with respect to all parameters except for $\mu$. 
Let $\tau_0\geq 0$ be an arbitrary time in the evolution of FPPHE.
\begin{Lemma}\label{lemma:dist-agg}
   For any $t>0$, choose a vertex $x_t \in \partial \Aol_t$ so that
   \[
      d_G(o,x_t)=\max_{z\in \partial \Aol_t}d_G(o,z).
   \]
   For any given shape of the aggregate $\Aol_{\tau_0}$ we can find an infinite ray $\overline{\gamma} $ started at $o$ for which there is a sequence of vertices 
   $w_{\tau_0}^{(0)}, w_{\tau_0}^{(1)}, w_{\tau_0}^{(2)},\ldots \in \overline{\gamma}$ such that $w_{\tau_0}^{(0)}=x_{\tau_0}$ and, for all $i\geq 1$, we have
   \[
      \sum_{k=1}^{2i} R_1^k \leq d_G(\Aol_{\tau_0}, w_{\tau_0}^{(i)}) \leq {18}\delta i + \sum_{k=1}^{2i} R_1^k.
   \]
   Moreover, for all $i\geq 1$ it also holds that
   \[
      d_G\left ( w_{\tau_0}^{(i-1)},  w_{\tau_0}^{(i)}\right ) = R_1^{2i}+R_1^{2i-1} +{18} \delta
   \]
   and 
   \[
      d_G\left ( o, w_{\tau_0}^{(i)}\right) \geq d_G\left(o,w_{\tau_0}^{(i-1)}\right ) + R_1^{2i}+R_1^{2i-1}.
   \]
\end{Lemma}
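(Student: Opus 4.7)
The plan is to construct the sequence $w_{\tau_0}^{(0)}, w_{\tau_0}^{(1)},\ldots$ inductively by repeatedly invoking Remark~\ref{rem:after-bog}, and then realize $\overline{\gamma}$ as the concatenation of a geodesic from $o$ to $w_{\tau_0}^{(0)}=x_{\tau_0}$ with geodesics joining consecutive $w_{\tau_0}^{(i-1)}$ and $w_{\tau_0}^{(i)}$. Set $w_{\tau_0}^{(0)} := x_{\tau_0}$ and, assuming $w_{\tau_0}^{(i-1)}$ has already been chosen, let $s_i := R_1^{2i}+R_1^{2i-1}+16\delta$. With $R_1$ taken large enough that $s_i > 4\delta$ and (for $i\geq 1$) $w_{\tau_0}^{(i-1)}$ lies sufficiently far from $o$, Remark~\ref{rem:after-bog} applied to the ball $B_G(w_{\tau_0}^{(i-1)}, s_i)$ produces a vertex $w_{\tau_0}^{(i)}\in \partial B_G(w_{\tau_0}^{(i-1)}, s_i)$ with $d_G(o, w_{\tau_0}^{(i)}) \geq d_G(o, w_{\tau_0}^{(i-1)}) + s_i - 16\delta$. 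This is precisely the third bullet, and telescoping yields
\[
d_G(o, w_{\tau_0}^{(i)}) \geq d_G(o, x_{\tau_0}) + \sum_{k=1}^{2i} R_1^k,
\]
which is the fourth.

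For the upper bound on $d_G(\Aol_{\tau_0}, w_{\tau_0}^{(i)})$, since $x_{\tau_0}\in \Aol_{\tau_0}$, the triangle inequality along the broken geodesic built so far gives
\[
d_G(\Aol_{\tau_0}, w_{\tau_0}^{(i)}) \leq d_G(x_{\tau_0}, w_{\tau_0}^{(i)}) \leq \sum_{j=1}^{i} d_G(w_{\tau_0}^{(j-1)}, w_{\tau_0}^{(j)}) = \sum_{j=1}^{i} s_j = 16\delta\, i + \sum_{k=1}^{2i} R_1^k.
\]
For the matching lower bound I would argue that $d_G(o,z)\leq d_G(o,x_{\tau_0})$ for every $z\in \Aol_{\tau_0}$. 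Indeed, the finite set $\Aol_{\tau_0}$ admits some maximizer $z^\star$ of $d_G(o,\cdot)$; using that $G$ is infinite, locally finite and vertex-transitive, one shows that any geodesic from $o$ to $z^\star$ admits a one-edge extension, so $z^\star$ has a neighbor at distance $d_G(o,z^\star)+1$ which by maximality lies outside $\Aol_{\tau_0}$, placing $z^\star \in \partial\Aol_{\tau_0}$ and hence $d_G(o,z^\star)\leq d_G(o,x_{\tau_0})$. Combined with the triangle inequality, this yields
\[
d_G(z, w_{\tau_0}^{(i)}) \geq d_G(o, w_{\tau_0}^{(i)}) - d_G(o, z) \geq \sum_{k=1}^{2i} R_1^k,
\]
for every $z\in\Aol_{\tau_0}$, as required.

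Finally, $\overline{\gamma}$ is obtained by concatenating a geodesic from $o$ to $w_{\tau_0}^{(0)}$ with geodesics between successive $w_{\tau_0}^{(i-1)}$ and $w_{\tau_0}^{(i)}$; since $d_G(o,w_{\tau_0}^{(i)})\to\infty$, this is a genuine infinite ray in $G$ passing through $o$ and every $w_{\tau_0}^{(i)}$. The main obstacle I foresee is making rigorous the step asserting $d_G(o,z)\leq d_G(o,x_{\tau_0})$ on all of $\Aol_{\tau_0}$: a vertex-transitive graph need not be free of local maxima of $d_G(o,\cdot)$ off the origin \emph{a priori}, so the extension-of-geodesic argument above must be justified carefully (e.g.\ via a K\"onig-type argument combined with transitivity). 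Every other step is a routine matter of telescoping sums and triangle inequalities, given Remark~\ref{rem:after-bog}.
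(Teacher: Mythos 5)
Your construction is essentially the same as the paper's: set $w_{\tau_0}^{(0)}=x_{\tau_0}$, iterate Remark~\ref{rem:after-bog} with radii $S_i = R_1^{2i}+R_1^{2i-1}+16\delta$ to produce $w_{\tau_0}^{(i)}\in\partial B_G(w_{\tau_0}^{(i-1)},S_i)$, telescope the resulting distance estimates, and take $\overline{\gamma}$ to be the concatenation of geodesics $\gamma_{o,x_{\tau_0}},\gamma_{w_{\tau_0}^{(0)},w_{\tau_0}^{(1)}},\dots$. The four displayed inequalities follow exactly as you wrote them.

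The one point you flag---that the lower bound on $d_G(\Aol_{\tau_0},w_{\tau_0}^{(i)})$ needs $d_G(o,z)\le d_G(o,x_{\tau_0})$ for \emph{every} $z\in\Aol_{\tau_0}$, not merely for $z\in\partial\Aol_{\tau_0}$---is indeed a hypothesis the paper uses implicitly as well: its own proof passes from $d_G(w,\Aol_{\tau_0})$ to $d_G(o,w)-d_G(o,x_{\tau_0})$ via the triangle inequality, which requires exactly this, and the paper offers no justification beyond ``by our definition of $x_{\tau_0}$.'' So this is not a divergence in approach but a shared glossed-over step; you have simply surfaced it. Be careful, though, with the repair you sketch: the ``extension-of-geodesic'' argument asserts that the farthest point $z^\star$ of a finite set from $o$ has a neighbor strictly farther, i.e.\ that $G$ has no dead-end vertices with respect to $o$. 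Vertex-transitive (even Cayley) graphs can have dead ends, so this argument as stated is not valid in full generality, and transitivity plus a K\"onig-type compactness argument will not rescue it. If one wants to make the step airtight, the cleanest route is to simply define $x_t$ as a maximizer of $d_G(o,\cdot)$ over all of $\Aol_t$ rather than over $\partial\Aol_t$---all subsequent uses of $x_{\tau_0}$ in the paper remain valid (one only needs $x_{\tau_0}\in\Aol_{\tau_0}$ and the bound $d_G(o,z)\le d_G(o,x_{\tau_0})$ for $z\in\Aol_{\tau_0}$, both of which then hold by definition).
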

\begin{Remark}
Note that Lemma \ref{lemma:dist-agg} would hold for any deterministic connected set that contains the origin. 
However, for sake of clarity, we wrote it only referred to the aggregate because we need to apply it to the set $\Aol_t$ defined as above.
\end{Remark}
\begin{proof}
   We will find the sequence of vertices inductively, and we start by setting a value $S_1:=R_1^2+R_1+{18}\delta$.
   From Lemma \ref{lemma:bog} (together with Remark \ref{rem:after-bog}) we find that there is an element $w\in \partial B_G(x_{\tau_0},S_1)$ such that 
   \[
   \begin{split}
   d_G(o,x_{\tau_0})+S_1 \geq d_G(o,w) & \geq d_G(o,x_{\tau_0}) +d_G(x_{\tau_0},w)-{18} \delta\\
   & =d_G(o,x_{\tau_0}) +S_1-{18} \delta,
   \end{split}
   \]
   where the last equality follows from the fact that $w$ is on the boundary of the ball centered at $x_{\tau_0}$.
   By the definition of $S_1$ we have
   \begin{equation}\label{eq:d(o,w1)}
   d_G(o,x_{\tau_0})+R_1^2+R_1+{18}\delta \geq d_G(o,w)  \geq d_G(o,x_{\tau_0}) +R_1^2+R_1,
   \end{equation}
   and by our definition  of $x_{\tau_0}$ we have that 
   \[
   S_1=d_G(w, x_{\tau_0})\geq d_G(w, \Aol_{\tau_0}) \geq d_G(o,w)-d_G(o,x_{\tau_0})\stackrel{\eqref{eq:d(o,w1)} }{\geq }R_1^2+R_1.
   \]
   Such vertex $w$ is the first element of the sought sequence, thus we call it $w_{\tau_0}^{(1)}$.
   The first segment of the ray $\overline{\gamma}$ consists of the concatenation of an arbitrary geodesic from $0$ to $x_{\tau_0}$ and an arbitrary geodesic $\gamma_{x_{\tau_0}, w_{\tau_0}^{(1)}}$.
   
   Now we proceed inductively.
   Suppose that we have found all the first $k\geq 1$ elements of the sought sequence, that is, we have defined $\left \{ w_{\tau_0}^{(i)}\right \}_{i=1}^k$, and now we want to define the element $w_{\tau_0}^{(k+1)}$.
   Set 
   $$
      S_{k+1}:= R_1^{2(k+1)}+R_1^{2(k+1)-1}+{18}\delta,
   $$ 
   and observe that by Lemma \ref{lemma:bog} and Remark \ref{rem:after-bog} we have that there exists an element $w\in \partial B_G(w_{\tau_0}^{(k)},S_{k+1})$ such that 
   \[
   \begin{split}
   d_G(o,w_{\tau_0}^{(k)})+S_{k+1} \geq d_G(o,w) 
   \geq
   d_G(o,w_{\tau_0}^{(k)}) +S_{k+1}-{18} \delta.
   \end{split}
   \]
   We will set $w_{\tau_0}^{(k+1)}=w$. Thus, from the definition of $w_{\tau_0}^{(k)}$ and setting $w_{\tau_0}^{(0)}=x_{\tau_0}$, it follows that
   \[
      d_G(w, \Aol_{\tau_0})
      \geq d_G(o,w)-d_G(o,x_{\tau_0}) 
      = \sum_{i=1}^{k+1} \left(d_G(o,w_{\tau_0}^{(i)})-d_G(o,w_{\tau_0}^{(i-1)})\right).
   \]
   Hence, we obtain the bound
   \[
      d_G(w, \Aol_{\tau_0})
      \geq \sum_{i=1}^{k+1}\left(S_{i}-{18}\delta\right)
      =\sum_{i=1}^{2(k+1)} R_1^i.
   \]
   For the upper bound, we obtain
   \[
      d_G(w, \Aol_{\tau_0})
      \leq d_G(w, x_{\tau_0})
      \leq \sum_{i=1}^{k+1} d_G(w_{\tau_0}^{(i)},w_{\tau_0}^{(i-1)})
      ={18}\delta (k+1) + \sum_{i=1}^{2(k+1)} R_1^i.
   \]
   Such vertex $w$ is the next element of the sought sequence, thus we denote it by $w_{\tau_0}^{(k+1)}$, and the next segment of the ray $\overline{\gamma}$ consists of an arbitrarily chosen geodesic $\gamma_{w_{\tau_0}^{(k)}, w_{\tau_0}^{(k+1)}}$.
   
   Simply using Remark \ref{rem:after-bog} leads to
   $
   d_G \left ( w_{\tau_0}^{(k)}, w_{\tau_0}^{(k+1)}\right ) =S_{k+1},
   $
   which concludes the proof.
\end{proof}
The main idea of the proof of Theorem \ref{thm:survival_FPP_lambda} is the following.
We take a sequence of large balls $B_{\tau_0}^{(1)},B_{\tau_0}^{(2)},\ldots$ (of increasing radii) 
centered at the vertices $\{w_{\tau_0}^{(k)}\}_{k\geq 1}$, and condition on the event that the first one $B_{\tau_0}^{(1)}$ is completely full of seeds (the ball has finite volume, hence this event has positive probability).
This will force $\Fo$ to make a long detour around it to occupy any vertex that is further away in $\overline{\gamma}$, which will require a very long time.
Meanwhile, $\Fl$ started from the seeds will spread along $\overline{\gamma}$ and start occupying the subsequent balls.
The rest of the proof will consist in proving that for any such attempt, with positive probability, $\Fl$ will succeed in occupying the infinite sequence of balls. 
We proceed now with the next step, which consists in defining the balls that should be occupied by $\Fl$ and avoided by $\Fo$.
For a graphic representation refer to Figure \ref{fig:proof-lambda-2}.

\begin{figure}[h!]
\begin{center}
\hspace{\stretch{1}}\includegraphics[scale=0.4]{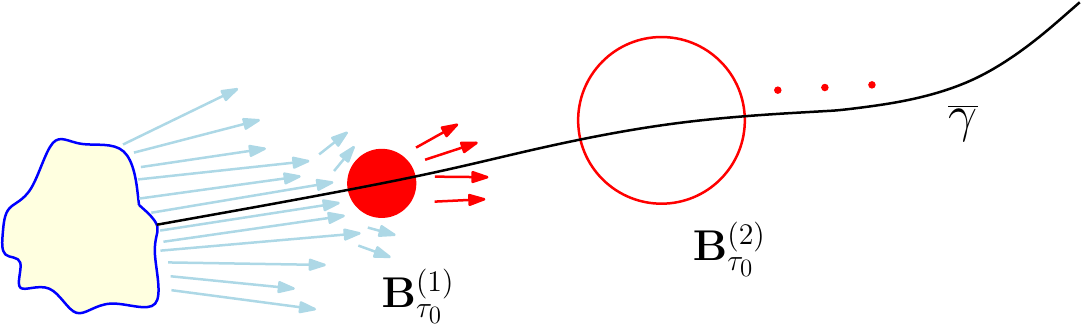}
\hspace{\stretch{1}}\includegraphics[scale=0.4]{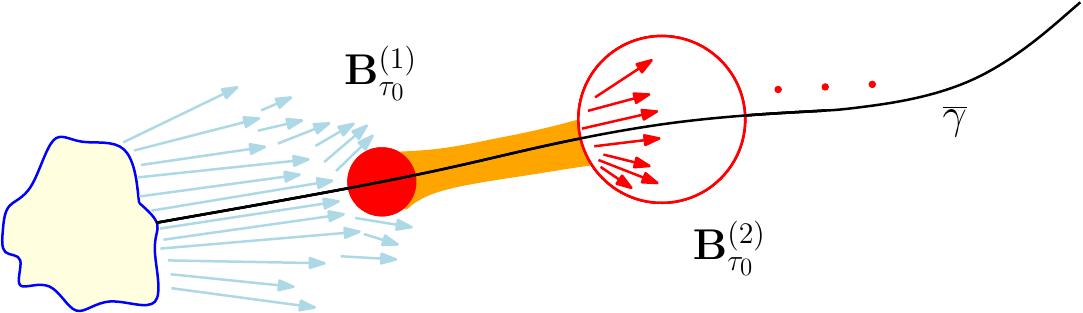}\hspace{\stretch{1}}
\caption{\emph{Left:} The light blue arrows represent paths of FPP started from $\Aol_{\tau_0}$ which eventually activate the seeds in $\B_{\tau_0}^{(1)}$.
While the geodesic of $\Fo$ started in $\Aol_{\tau_0}$ (light blue arrows) try to go around the ball full of seeds, paths of $\Fl$ (red arrows) have already occupied and exited $\B_{\tau_0}^{(1)}$.
\emph{Right:} While the paths of FPP started in $\Aol_{\tau_0}$ (blue arrows) are still trying to surround the first ball, paths of $\Fl$ have already filled up a long region (colored in orange) and started to reach the following ball.
This will be useful to show that (with positive probability) $\Fo$ is too slow to ever surround $\Fl$.}
\label{fig:proof-lambda-2}
\end{center}
\end{figure}


\subsection{Step 2: Construction of the balls}
\subsubsection{First ball}\label{sect:ball-1}
Recall the value of $R_1$ and the sequence of vertices $\{w_{\tau_0}^{(k)}\}_{k\geq 1} \in \overline{\gamma}$ defined above.
From Lemma \ref{lemma:dist-agg} it follows that
\begin{equation}\label{eq:d(w,A)}
R_1+R_1^2+{18} \delta\geq 
d_G(\Aol_{\tau_0}, w_{\tau_0}^{(1)})
\geq R_1+R_1^2.
\end{equation}
Now we  place a ball of radius $R_1$ centered at $w_{\tau_0}^{(1)}$, and denote it by $\B_{\tau_0}^{(1)}$: this will be the ``first ball'' which we need in order to start the whole procedure.
Refer to Figure \ref{fig:proof-lambda-1}.
Note that with positive probability (bounded from below by $\mu^{\Delta^{R_1}}>0$), the ball $\B_{\tau_0}^{(1)}$ is completely filled with seeds, so we assume this event holds in the subsequent steps. 
To formalize this, we define the event
$$
   \mathcal{E}_{\tau_0}^{(1)}=\left\{B_{\tau_0}^{(1)}\setminus \{\mathrm{seeds}\} = \emptyset\right\}.
$$

\begin{Remark}
The upper bound on the distance in relation \eqref{eq:d(w,A)} will be useful later on in order to ensure that the ball $\B_{\tau_0}^{(1)}$ is activated quickly enough.
\end{Remark}

\begin{figure}[h!]
\begin{center}
\includegraphics[scale=0.4]{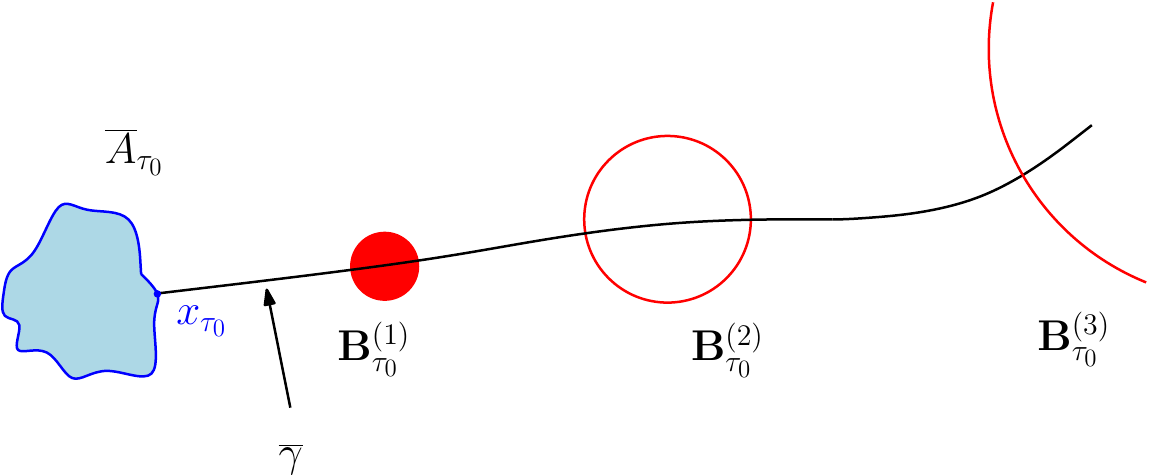}
\caption{The blue cluster is $\Aol_{\tau_0}$, and the solid red ball $B_{\tau_0}^{(1)}$ is the one full of seeds.}
\label{fig:proof-lambda-1}
\end{center}
\end{figure}

Now we proceed with the construction of the other balls using Lemma \ref{lemma:dist-agg} inductively.

\subsubsection{Second ball}\label{sect:ball-2andk}
Start by taking $w_{\tau_0}^{(2)}$ found in Lemma \ref{lemma:dist-agg}.
As we know, we have
$w_{\tau_0}^{(2)}\in \overline{\gamma}$, and $d_G(w_{\tau_0}^{(1)}, w_{\tau_0}^{(2)})= R_1^4+R_1^3+{18} \delta$.
From now on we will denote
\[
R_2:=R_1^2,
\quad\text{and set}\quad
\B_{\tau_0}^{(2)}:=B_G(w_{\tau_0}^{(2)}, R_2).
\]
We refer to Figure \ref{fig:proof-lambda-1}, and remark that from our definitions it follows that 
$$
   d_G(\B_{\tau_0}^{(1)}, \B_{\tau_0}^{(2)})=R_1^4+R_1^3-R_1^2-R_1+{18}\delta \leq 2 R_2 R_1^2.
$$

At this point, using $\delta$-thinness, we obtain the next result, which is illustrated in 
Figure \ref{fig:geo-balls}.
\begin{figure}[h!]
\begin{center}
\includegraphics[scale=0.4]{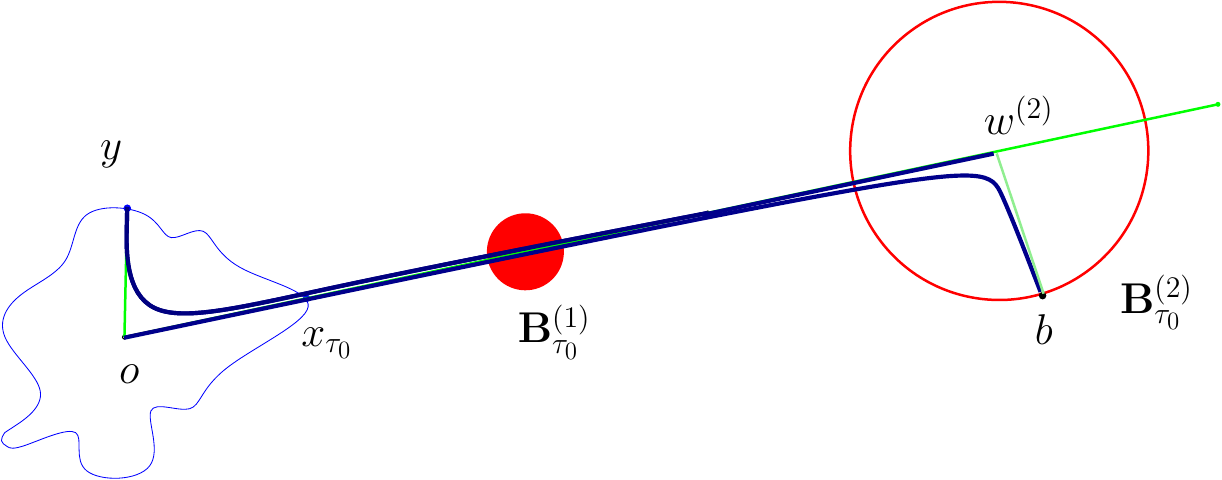}
\caption{A representation of the situation described in Lemma \ref{lemma:geodesics-in-ball}, where the white shape on the left is $\Aol_{\tau_0}$, and $x_{\tau_0}, y$ are as described above.
The light green segments represent geodesics, as well as the dark blue ones. We did not draw the geodesic(s) between $y$ and $b$, in order to avoid confusion.}
\label{fig:geo-balls}
\end{center}
\end{figure}
\begin{Lemma}\label{lemma:geodesics-in-ball}
   For all $y\in \partial \Aol_{\tau_0}$ and each vertex $b\in \B_{\tau_0}^{(2)}$, all geodesics $\gamma_{y,b}$ connecting $y$ with $b$ are such that
   $
   \gamma_{y,b}\cap B(w_{\tau_0}^{(1)}, 23\, \delta)\neq \emptyset.
   $
\end{Lemma}
\begin{proof}
Fix any $y\in \partial \Aol_{\tau_0}$ and any vertex $b\in \B_{\tau_0}^{(2)}$.
For this proof it suffices to choose $b\in \partial \B_{\tau_0}^{(2)}$, as every geodesic that enters $\B_{\tau_0}^{(2)}$ has to contain at least a vertex of $\partial \B_{\tau_0}^{(2)}$.
%
The core of this proof consists in showing that
\begin{equation}\label{eq:w,gamma}
d_G(w_{\tau_0}^{(1)}, \gamma_{o, w_{\tau_0}^{(2)}})\leq 21\delta.
\end{equation}
In fact, suppose that \eqref{eq:w,gamma} holds. 
By our choice of $R_1$ it follows that for the fixed $y\in \partial \Aol_{\tau_0}$ and $b\in \partial \B_{\tau_0}^{(2)}$
\[
d_G(\gamma_{o,y},w_{\tau_0}^{(1)})>25\delta, \quad d_G(w_{\tau_0}^{(1)}, \gamma_{w_{\tau_0}^{(2)},b})>25\delta, \quad d_G(w_{\tau_0}^{(2)}, y) \geq  d_G(w_{\tau_0}^{(2)}, \Aol_{\tau_0})>25\delta.
\]
Then, by $\delta$-thinness we obtain that
\[
d_G(w_{\tau_0}^{(1)}, \gamma_{y, b})\leq \delta +d_G(w_{\tau_0}^{(1)}, \gamma_{o, b}) \leq 2\delta + d_G(w_{\tau_0}^{(1)}, \gamma_{o, w_{\tau_0}^{(2)}})\stackrel{\eqref{eq:w,gamma} }{\leq }23\, \delta,
\]
as claimed.
Thus we proceed to show the validity of \eqref{eq:w,gamma}.
Consider the triangle $\{o,w_{\tau_0}^{(1)}, w_{\tau_0}^{(2)} \}$ (for a graphical representation see Figure \ref{fig:geo-balls}), and recall that by Lemma \ref{lemma:dist-agg} we know that
\begin{equation}\label{eq:conseq-lemma-bog}
d_G(o, w_{\tau_0}^{(2)})\geq d_G(o, w_{\tau_0}^{(1)}) + d_G(w_{\tau_0}^{(1)}, w_{\tau_0}^{(2)})-{18} \delta.
\end{equation}
Let $v\in \gamma_{o,w_{\tau_0}^{(1)}}$ denote a vertex such that
$
19\delta < d_G(w_{\tau_0}^{(1)}, v)\leq 20\delta.
$
If we show that 
$
d_G(v, \gamma_{o, w_{\tau_0}^{(2)}})\leq \delta,
$
then we would immediately deduce that
\[
d_G(w_{\tau_0}^{(1)}, \gamma_{o, w_{\tau_0}^{(2)}})\leq d_G(w_{\tau_0}^{(1)}, v) + d_G(v, \gamma_{o, w_{\tau_0}^{(2)}})\leq 21\delta,
\]
that is \eqref{eq:w,gamma}.
We will show this by contradiction; so from now on assume that $d_G(v, \gamma_{o, w_{\tau_0}^{(2)}})> \delta$.
This and $\delta$-thinness of the triangle $\{o,w_{\tau_0}^{(1)},w_{\tau_0}^{(2)}\}$ implies that there exists a vertex $v'\in \gamma_{w_{\tau_0}^{(1)}, w_{\tau_0}^{(2)}}$ such that
\begin{equation}\label{eq:vv'}
d_G(v,v')\leq \delta.
\end{equation}
Then we have that
\[
d_G(o , w_{\tau_0}^{(1)})=d_G(o,v)+d_G(v, w_{\tau_0}^{(1)})
\quad\text{and}\quad
%
d_G(w_{\tau_0}^{(1)}, w_{\tau_0}^{(2)})=d_G(w_{\tau_0}^{(1)}, v') + d_G(v', w_{\tau_0}^{(2)}).
\]
In particular, by using these relations in \eqref{eq:conseq-lemma-bog} we deduce that
\[
d_G(o, w_{\tau_0}^{(2)})\geq d_G(o,v)+d_G(v, w_{\tau_0}^{(1)}) + d_G(w_{\tau_0}^{(1)}, v') + d_G(v', w_{\tau_0}^{(2)})-{18} \delta.
\]
On the other hand, by the triangle inequality we always have
\[
d_G(o, w_{\tau_0}^{(2)}) \leq d_G(o,v)+ d_G(v,v')+d_G(v', w_{\tau_0}^{(2)})\stackrel{\eqref{eq:vv'} }{\leq} d_G(o,v)+ \delta+d_G(v', w_{\tau_0}^{(2)}).
\]
The above two inequalities imply that 
$$
   d_G(v, w_{\tau_0}^{(1)}) + d_G(w_{\tau_0}^{(1)}, v')-{18} \delta
   \leq 
   \delta,
$$
but this is a contradiction since $d_G(v, w_{\tau_0}^{(1)})> 19\delta$.
%
Therefore \eqref{eq:w,gamma} holds and the lemma is proven.
\end{proof}

We will now proceed to show that the probability that $\Fo$ starting from anywhere in $\Aol_{\tau_0}$ gets close enough to $\B_{\tau_0}^{(2)}$ is very low.
Consider the portion of the geodesic $\gamma_{w_{\tau_0}^{(1)},w_{\tau_0}^{(2)}}$ that does not intersect $\B_{\tau_0}^{(1)}$, that is
$
\gamma_{w_{\tau_0}^{(1)},w_{\tau_0}^{(2)}}\setminus  \B_{\tau_0}^{(1)}.
$
Now define
\begin{equation}\label{eq:ins-pzato1}
\text{P}_{\tau_0}^{(1)}:= \B_{\tau_0}^{(2)}\cup \left (\gamma_{w_{\tau_0}^{(1)},w_{\tau_0}^{(2)}}\setminus  \B_{\tau_0}^{(1)}\right ),
\end{equation}
and let $G^{(1)}$ be the sub-graph of $G$ induced by the ``removal'' of $\B_{\tau_0}^{(1)}$.
In other words, all paths in the graph $G^{(1)}$ must avoid the ball $\B_{\tau_0}^{(1)}$, inducing possibly exponentially long detours.

Subsequently we set 
\begin{equation}\label{eq:T_1}
\mathcal{T} _1:= R_1^6; 
\end{equation}
the reason for this choice will become clear later.
Recall the construction of FPPHE from the passage times $\{t_e\}_{e\in E(G)}$ described in Section~\ref{sec:constrfpphe}.
From now on, for every two vertices $u,v\in V(G) $, we fix a geodesic $\gamma_{u,v}$ in $G$ such that if $u,v$ are vertices belonging to the same geodesic segment of $\overline{\gamma}$, then $\gamma_{u,v}$ is chosen as the same geodesic 
appearing in $\overline{\gamma}$. Then, we define
\[
\Tol(u\to v):= \sum_{e\in\gamma_{u,v}} t_{e},
\]
and note that $\Tol(u\to v)$ corresponds to the passage time of a rate-$1$ FPP over the path $\gamma_{u,v}$. (The subscript in $\Tol$ is to emphasize that passage times are of rate $1$.)
If $\bar w_{\tau_0}^{(1)}$ denotes the vertex of $\gamma_{x_{\tau_0},w_{\tau_0}^{(1)}}$ at distance $R_1$ from $w_{\tau_0}^{(1)}$ (that is, $\bar w_{\tau_0}^{(1)}=\gamma_{x_{\tau_0},w_{\tau_0}^{(1)}}\cap \partial B_{\tau_0}^{(1)}$),
then we have that 
\begin{equation}\label{eq:conseq-T-bar}
\begin{split}
\mathcal{T}_1 & \geq R_1\left(\max\{1,\lambda^{-1}\} d_G(x_{\tau_0},\bar w_{\tau_0}^{(1)}) + \lambda^{-1}d_G(\bar w_{\tau_0}^{(1)}, w_{\tau_0}^{(2)}) + \lambda^{-1}(R_1+R_2)\right)\\
    &= R_1\left(\max\{1,\lambda^{-1}\} (R_1^2+{18}\delta) + \lambda^{-1}(R_1^4+R_1^3+R_1^2+ 2R_1+{18}\delta)\right).
\end{split}
\end{equation}
If we disregard the factor of $R_1$ outside the parenthesis, the above bounds have the following meaning. The term $\max\{1,\lambda^{-1}\} d_G(x_{\tau_0},\bar w_{\tau_0}^{(1)})$ is the expected time 
of the slowest between $\Fo$ and $\Fl$ to go from $x_{\tau_0}$ along $\gamma_{x_{\tau_0},w_{\tau_0}^{(1)}}$ until activating the seed located in $\bar w_{\tau_0}^{(1)}$. 
Then, $\lambda^{-1}d_G(\bar w_{\tau_0}^{(1)}, w_{\tau_0}^{(2)})$ is the expected time that $\Fl$ takes to go from $\bar w_{\tau_0}^{(1)}$ along $\overline{\gamma}$ until hitting $w_{\tau_0}^{(2)}$, 
and $\lambda^{-1}R_1$ and $\lambda^{-1}R_2$ are the times that (with high probability) $\Fl$ takes to 
go from $w_{\tau_0}^{(1)}$ to a vertex in the boundary of $\B_{\tau_0}^{(1)}$ and from $w_{\tau_0}^{(2)}$ to a vertex in the boundary of $\B_{\tau_0}^{(2)}$, respectively.
The factor $R_1$ comes into play just to assure that, with high probability, we have 
\begin{align*}
   \mathcal{T}_1 &\geq \max\{1,\lambda^{-1}\}\Tol\left (x_{\tau_0}\to \bar w_{\tau_0}^{(1)} \right )+\lambda^{-1}\Tol\left (\bar w_{\tau_0}^{(1)}\to w_{\tau_0}^{(2)} \right )\\
   &\quad + \lambda^{-1}\max_{x\in \B_{\tau_0}^{(1)}} \Tol(w_{\tau_0}^{(1)} \to  x) + \lambda^{-1}\max_{x\in \B_{\tau_0}^{(2)}}\Tol(w_{\tau_0}^{(2)}\to x).
\end{align*}
We are ready to define an \emph{enlargement} of $\text{P}_{\tau_0}^{(1)}$ as follows:
\[
\text{EP}_{\tau_0}^{(1)}:= \bigcup_{x\in \text{P}_{\tau_0}^{(1)}} B_{G^{(1)}}\left (x, \cout \mathcal{T}_1\right ).
\]
The reason why this is needed, is because of measurability.
In fact, as we will show, in order to understand crucial information about the process we only need to look at the passage times inside the set $\text{EP}_{\tau_0}^{(1)}$.
For the same reason, we need to define the \emph{boundary} of $\text{EP}_{\tau_0}^{(1)}$ in the graph $G^{(1)}$ and not in $G$.
More precisely, we set
\[
\partial_1 \text{EP}_{\tau_0}^{(1)}:=\left \{ x\in \text{EP}_{\tau_0}^{(1)} \ : \ \exists y \in G^{(1)}\setminus \text{EP}_{\tau_0}^{(1)} \text{ such that }\{x,y\}\in E(G^{(1)}) \right \}.
\]
It is fundamental to emphasize that Lemma \ref{lemma:geodesics-in-ball}, together with our choice of $\mathcal{T}_1$ (cf.\ \eqref{eq:T_1}) leads to
\[
d_{G^{(1)}}(\Aol_{\tau_0}, \text{EP}_{\tau_0}^{(1)}) \ \stackrel{\text{Lem.\ }\ref{lemma:geodesics-in-ball},\text{Prop.\ }\ref{prop:gromov-result} }{\geq } \ \delta 2^{R_1-22\delta-2} \ \geq  \ 10 \mathcal{T}_1.
\]
In particular, it follows that $\text{EP}_{\tau_0}^{(1)}$ and $\Aol_{\tau_0}$ are disjoint, so to go from $\Aol_{\tau_0}$ to $\text{P}_{\tau_0}^{(1)}$, $\Fo$ needs to pass through $\partial_1 \text{EP}_{\tau_0}^{(1)}$, 
and to go from $\partial_1 \text{EP}_{\tau_0}^{(1)}$ to $\text{P}_{\tau_0}^{(1)}$ it takes much longer than $\mathcal{T}_1$. 
For a graphical representation refer to Figure  \ref{fig:ins-pz2}.


\begin{figure}[h!]
\begin{center}
\includegraphics[scale=0.4]{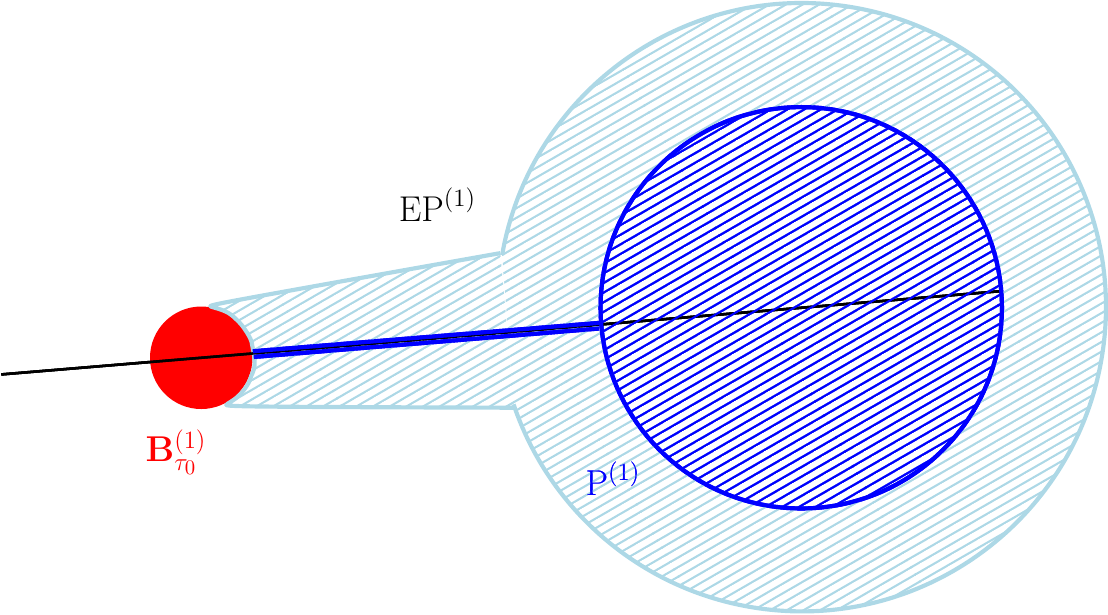}
\caption{The set $\text{P}_{\tau_0}^{(1)}$ is represented in dark blue (dashed ball $\B_{\tau_0}^{(2)} $together with the thick segment), and its enlargement $\text{EP}_{\tau_0}^{(1)}$ is colored light blue (dashed).
The red ball is $\B_{\tau_0}^{(1)}$.}
\label{fig:ins-pz2}
\end{center}
\end{figure}

%
We now let $T^{(1)}(\cdot)$ be the same function $T(\cdot)$ of the passage times of rate $1$ as in~\eqref{eq:def_T(P)} but using only edges from $G^{(1)}$.
We formalize the discussion above by defining the following \emph{good events}:
\begin{equation}\label{eq:event-F12}
\mathcal{F}_1^{(2)}:=\left \{
\begin{split}
   \mathcal{T}_1 &\geq \max\{1,\lambda^{-1}\}\Tol\left (x_{\tau_0}\to \bar w_{\tau_0}^{(1)} \right )+\lambda^{-1}\Tol\left (\bar w_{\tau_0}^{(1)}\to w_{\tau_0}^{(2)} \right )\\
   &\quad + \lambda^{-1}\max_{x\in B_{\tau_0}^{(1)}} \Tol(w_{\tau_0}^{(1)} \to  x) + \lambda^{-1}\max_{x\in B_{\tau_0}^{(2)}}\Tol(w_{\tau_0}^{(2)}\to x)
\end{split}
\right \},
\end{equation}
and
\begin{equation}\label{eq:event-F22}
\mathcal{F}_2^{(2)} := \left\{T^{(1)}\left(\text{P}_{\tau_0}^{(1)} \to \partial_1 \text{EP}_{\tau_0}^{(1)}\right) > \mathcal{T}_1\right\}.
\end{equation}
At this point we are able to define a crucial event:
\begin{equation}\label{eq:1st-attempt-2ball}
\mathcal{E}_{\tau_0}^{(2)}:= \mathcal{F}_1^{(2)}\cap \mathcal{F}_2^{(2)}.
\end{equation}
In words, if the event $\mathcal{E}_{\tau_0}^{(2)}$ occurs, then it implies that $\Fl$ completely occupies $\text{P}_{\tau_0}^{(1)}$ before $\Fo$ can move through $G^{(1)}$ from $\Aol_{\tau_0}$ to $\text{P}_{\tau_0}^{(1)}$. 


\begin{Lemma}\label{lemma:pzato1}
We can set $R_1$ large enough with respect to $G$ so that 
$
\P \bigl[ \mathcal{E}_{\tau_0}^{(2)} \bigr ]\geq \frac{3}{4}.
$
\end{Lemma}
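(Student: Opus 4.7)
The plan is to show that both $\mathcal{F}_1^{(2)}$ and $\mathcal{F}_2^{(2)}$ fail with probability smaller than $1/8$ for $R_1$ sufficiently large, so that a union bound yields the lemma. Both estimates exploit the same underlying idea: the choice $\mathcal{T}_1 = R_1^6$ is a factor of $R_1$ larger than the ``deterministic'' time scale governing each event, so a standard large-deviation estimate on sums of i.i.d.\ Exp$(1)$ variables suffices.

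First I would handle $\mathcal{F}_2^{(2)}$. By the definition of $\text{EP}_{\tau_0}^{(1)}$ as a $\cout \mathcal{T}_1$-enlargement of $\text{P}_{\tau_0}^{(1)}$ in $G^{(1)}$, every $y\in\partial_1\text{EP}_{\tau_0}^{(1)}$ satisfies $d_{G^{(1)}}(y,\text{P}_{\tau_0}^{(1)})\geq \cout \mathcal{T}_1 - 1$; otherwise the exterior neighbor witnessing membership in $\partial_1 \text{EP}_{\tau_0}^{(1)}$ would itself lie in $\text{EP}_{\tau_0}^{(1)}$. Since $G^{(1)}$ inherits the bounded-degree property from $G$, the proof of Lemma~\ref{lemma:typical-FPP}~\eqref{eq:Eq1} transfers verbatim to $G^{(1)}$: for each fixed $x\in\text{P}_{\tau_0}^{(1)}$, the rate-$1$ FPP started at $x$ stays inside the $d_{G^{(1)}}$-ball of radius $\cout\mathcal{T}_1$ around $x$ up to time $\mathcal{T}_1$ with probability at least $1-e^{-\uc{c:typical-FPP}\mathcal{T}_1}$, and so cannot reach $\partial_1 \text{EP}_{\tau_0}^{(1)}$ within that time. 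A union bound over the $|\text{P}_{\tau_0}^{(1)}|\leq \Delta^{R_1^2}+O(R_1^4)$ possible starting vertices gives
\[
   \P\left[\left(\mathcal{F}_2^{(2)}\right)^c\right]\leq \left(\Delta^{R_1^2}+O(R_1^4)\right)e^{-\uc{c:typical-FPP}R_1^6},
\]
which is negligible for $R_1$ large.

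For $\mathcal{F}_1^{(2)}$, the deterministic inequality~\eqref{eq:conseq-T-bar} already shows that $\mathcal{T}_1$ exceeds $R_1$ times the sum of the expected values of the four $\Tol$-terms in the definition of $\mathcal{F}_1^{(2)}$. It therefore suffices to show that each term does not exceed its mean by more than a factor of order $R_1$. For the two single-geodesic summands $\Tol(x_{\tau_0}\to \bar w_{\tau_0}^{(1)})$ and $\Tol(\bar w_{\tau_0}^{(1)}\to w_{\tau_0}^{(2)})$, each a sum of i.i.d.\ Exp$(1)$'s along a fixed geodesic of length at most $R_1^2+O(\delta)$ and $R_1^4+R_1^3+O(\delta)$ respectively, a direct application of the second part of Lemma~\ref{lemma:typical-FPP2} with a threshold of order $R_1^6$ yields a doubly exponentially small error. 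For the two max-terms we additionally take a union bound over the at most $\Delta^{R_1}$ (resp.\ $\Delta^{R_1^2}$) fixed geodesics joining $w_{\tau_0}^{(1)}$ (resp.\ $w_{\tau_0}^{(2)}$) to the vertices of $B_{\tau_0}^{(1)}$ (resp.\ $B_{\tau_0}^{(2)}$); each such geodesic has length at most $R_1$ or $R_1^2$, so the same large-deviation bound multiplied by the combinatorial factor is still dominated by the $R_1^6$ threshold. Combining the four contributions, $\P[(\mathcal{F}_1^{(2)})^c]<1/8$ for $R_1$ large, and the lemma follows from the union bound.

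The main point that will require care is to ensure that the bounds above are uniform in $\tau_0$ and in the shape of $\Aol_{\tau_0}$, since this uniformity is what will allow the ``retry'' argument that one wants to carry out after a failed iteration. Both probabilities above depend only on the combinatorial structure of balls of radius at most $R_1^2$ and on geodesic segments of $\overline{\gamma}$ of length at most $R_1^4+O(R_1^3)$, none of which depend on $|\Aol_{\tau_0}|$, so uniformity is automatic; but this is worth making explicit when writing out the argument, since it is the one subtle point behind the lemma.
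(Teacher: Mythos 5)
Your proposal follows the same strategy as the paper's proof: bound $\P[(\mathcal{F}_1^{(2)})^c]$ via Lemma~\ref{lemma:typical-FPP2} applied to each of the four $\Tol$-terms with a union bound over vertices of $\B_{\tau_0}^{(1)}$ and $\B_{\tau_0}^{(2)}$, and bound $\P[(\mathcal{F}_2^{(2)})^c]$ by union bounding over starting vertices in $\text{P}_{\tau_0}^{(1)}$ (the small set) and using the exponential decay of the probability that rate-$1$ FPP in $G^{(1)}$ travels further than $\cout\mathcal{T}_1$ in time $\mathcal{T}_1$; the paper phrases the latter step via the symmetry of passage times, but this is logically the same estimate. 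Your closing remark on uniformity in $\tau_0$ and $\Aol_{\tau_0}$ is a useful observation that the paper leaves implicit at this point.
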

\begin{proof}
For the purpose of this proof, denote by $\pi_1:=\pi_1(\overline{\gamma})$ and $\widehat{\pi}_1:=\widehat{\pi}_1(\overline{\gamma})$ the first and the last vertices along $\overline{\gamma}$ such that 
\[
\pi_1\in \overline{\gamma}\cap \B_{\tau_0}^{(1)}, \quad \text{and} \quad \widehat{\pi}_1\in \overline{\gamma}\cap \B_{\tau_0}^{(1)}.
\]
Note that $\pi_1 = \bar w_{\tau_0}^{(1)}$.
Moreover, set
%
$
\Upsilon_{\tau_0}^{(1)}:=C_{1}+C_{2}+C_3,
$
where we have defined the following quantities
\[
\begin{aligned}
 &C_{1}:=\max\{1,\lambda^{-1}\}\Tol(x_{\tau_0 } \to \pi_1),\quad\quad
 & &C_{2}:= \lambda^{-1} \Tol(\pi_1 \to w_{\tau_0}^{(2)}),\\
 &C_{3}:= \lambda^{-1}\max_{x\in \B_{\tau_0}^{(1)}} \Tol(w_{\tau_0}^{(1)} \to  x),
 & &C_{4}:= \lambda^{-1}\max_{x\in \B_{\tau_0}^{(2)}}\Tol(w_{\tau_0}^{(2)}\to x).
\end{aligned}
\]

The value of $\Upsilon_{\tau_0}^{(1)}$ is an over-estimate on the time needed for FPP started in $\Aol$ to occupy the set 
$
\gamma_{x_{\tau_0 },\pi_1}  \cup  \B_{\tau_0}^{(1)}  \cup  \text{P}_{\tau_0}^{(1)}.
$
Note that the value of $\mathcal{T}_1$ is defined in such a way that
\[
\begin{split}
   \P\left (\mathcal{F}_1^{(2)}\right )
   &= \P \left ( \Upsilon_{\tau_0}^{(1)}\leq \mathcal{T}_1 \right )\\
   &\geq 1 - \P\left(\Tol(x_{\tau_0} \to \pi_1) \geq R_1\,d_G(x_{\tau_0}, \pi_1)\right) 
      - \P\left(\Tol(\pi_1 \to w_{\tau_0}^{(2)}) \geq R_1\,d_G(\pi_1, w_{\tau_0}^{(2)})\right) \\
   &\quad - \Delta^{R_1}\max_{x\in \B_{\tau_0}^{(1)}}\P\left(\Tol(w_{\tau_0}^{(1)}\to x) \geq R_1^2)\right)
    - \Delta^{R_2}\max_{x\in \B_{\tau_0}^{(2)}}\P\left(\Tol(w_{\tau_0}^{(2)}\to x) \geq R_1R_2)\right).
\end{split}
\]
Using the second part of Lemma~\ref{lemma:typical-FPP2}, as well as the fact that $R_1$ is large enough, we obtain 
\[
   \P\left (\mathcal{F}_1^{(2)}\right )
   \geq 1 - \exp\left(-R_1^2/2\right).
\]
Furthermore, since $\Fo$ has to pass through $\partial_1 \text{EP}_{\tau_0}^{(1)}$ before reaching $\text{P}_{\tau_0}^{(1)}$, we can exploit the symmetry of the process to obtain 
\[
\begin{split}
\P & \left ( \exists v\in \partial \text{EP}_{\tau_0}^{(1)} \ : T^{(1)}(v \to \text{P}_{\tau_0}^{(1)})\leq \mathcal{T}_1\right )
= \P \left ( \exists v\in \partial \text{P}_{\tau_0}^{(1)} \ : T^{(1)}(v \to \text{EP}_{\tau_0}^{(1)}) \leq \mathcal{T}_1\right )\\
& \leq \sum_{v\in \partial \text{P}_{\tau_0}^{(1)}}\P\left ( T^{(1)}(v\to \partial_1 \text{EP}_{\tau_0}^{(1)})\leq \mathcal{T}_1\right ) \\
& \leq \left |\partial \text{P}_{\tau_0}^{(1)}\right |e^{-\uc{c:typical-FPP}\mathcal{T}_1} \\
& \leq ( R_1^4+R_1^3+{18}\delta+ \Delta^{R_2})e^{-\uc{c:typical-FPP}\mathcal{T}_1} .
\end{split}
\]
The lemma then follows since $
\P \left ( \mathcal{E}_{\tau_0}^{(2)}\text{ fails}\right ) \leq \P \left ( \mathcal{F}_1^{(2)} \text{ fails}\right ) + \P\left ( \mathcal{F}_2^{(2)} \text{ fails}\right )$, $\mathcal{T}_1=R_1^6$
and $R_1$ is large enough.
\end{proof}
So far we have shown that it is likely that $\B_{\tau_0}^{(2)}$ gets completely occupied by $\Fl$ before $\Fo$ comes even close to it.
In the next subsection we show that we can repeat this reasoning in an inductive way.

\subsubsection{Inductive procedure}

From now on, for all $k\geq 3$ set 
\[
R_k:=R_1^{2(k-1)}.
\]
Consider all the vertices $\left \{w_{\tau_0}^{(k)}\right \}_{k\geq 1}$ from Lemma \ref{lemma:dist-agg}.
Subsequently we set
\[
\B_{\tau_0}^{(k)}:=B_G(w_{\tau_0}^{(k)}, R_k).
\]
Note that 
$$
   d_G(\B_{\tau_0}^{(k-1)}, \B_{\tau_0}^{(k)})
   =R_1^{2k}+R_1^{2k-1}+{18}\delta - R_1^{2k-2}-R_1^{2k-4}
   \leq 2 R^{2k} = 2 R_{k+1}.
$$

Using $\delta$-thinness we obtain the next result, which is a generalization of Lemma \ref{lemma:geodesics-in-ball}.
\begin{Lemma}\label{lemma:geodesics-in-ball2}
For all $k\geq 2$, for all $y\in \partial \Aol_{\tau_0}$ and each vertex $b\in \B_{\tau_0}^{(k)}$, all geodesics $\gamma_{y,b}$ connecting $y$ with $b$ are such that
$
\gamma_{y,b}\cap B_G(w_{\tau_0}^{(k-1)}, 23\, \delta)\neq \emptyset.
$
\end{Lemma}
\begin{proof}
The proof is analogous to that of Lemma \ref{lemma:geodesics-in-ball} and thus we omit the details.
\end{proof}
We will now proceed to show that the probability of FPP started in $\Aol_{\tau_0}$ to get anywhere close to $\B_{\tau_0}^{(k)}$ is decaying exponentially fast in $k$.

We proceed inductively following the main ideas developed in the previous section.
Take the portion of the geodesic segment $\gamma_{w_{\tau_0}^{(k-1)},w_{\tau_0}^{(k)}} \subset \overline{\gamma}$ that does not intersect $\B_{\tau_0}^{(k-1)}$, that is
$
\gamma_{w_{\tau_0}^{(k-1)},w_{\tau_0}^{(k)}}\setminus  \B_{\tau_0}^{(k-1)}.
$
Now define
\begin{equation}\label{eq:ins-pzatok}
\text{P}_{\tau_0}^{(k-1)}:= \B_{\tau_0}^{(k)}\cup \left (\gamma_{w_{\tau_0}^{(k-1)},w_{\tau_0}^{(k)}}\setminus  \B_{\tau_0}^{(k-1)}\right ),
\end{equation}
and let $G^{(k-1)}$ be the sub-graph of $G$ induced by the ``removal'' of $\B_{\tau_0}^{(k-1)}$.
In other words, all paths in the graph $G^{(k-1)}$ must avoid the ball $\B_{\tau_0}^{(k-1)}$, inducing possibly exponentially long detours.
We also define $T^{(k-1)}(\cdot)$ as the function that defines the passage times in the graph $G^{(k-1)}$.
Subsequently we set
\begin{equation}\label{eq:T_k-1}
\mathcal{T}_{k-1}:= R_1^{2k+2}. 
\end{equation}
We are now ready to define an \emph{enlargement} of $\text{P}_{\tau_0}^{(k-1)}$ as follows:
\[
\text{EP}_{\tau_0}^{(k-1)}:= \bigcup\nolimits_{x\in \text{P}_{\tau_0}^{(k-1)}} B_{G^{(k-1)}}\Bigl (x, \cout \sum\nolimits_{j=1}^{k-1} \mathcal{T}_j\Bigr ).
\]
As before, we will define the boundary
\[
\partial_{k-1}\text{EP}_{\tau_0}^{(k-1)} := \left \{ x\in \text{EP}_{\tau_0}^{(k-1)} \ : \ \exists y \in G^{(k-1)}\setminus \text{EP}_{\tau_0}^{(k-1)} \text{ such that } \{x,y\}\in E(G^{(k-1)}) \right \}.
\]
Lemma \ref{lemma:geodesics-in-ball2} together with our choice of $R_1 $ guarantee that, for an appropriate constant $c$, we have 
$
d_{G^{(k-1)}}(\Aol_{\tau_0}, \text{EP}_{\tau_0}^{(k-1)})\geq c 2^{\delta R_{k-1}} \geq 10 \sum_{i=1}^{k-1}T_{i},
$
which in particular ensures that $\text{EP}_{\tau_0}^{(k-1)}$ does not intersect $\Aol_{\tau_0}$.

As we did in the case $k=2$ we need to define some further good events and then we will show a generalization of Lemma \ref{lemma:pzato1}.
For all $k\geq 3$ we set
$\bar w_{\tau_0}^{(k-1)}$ to be the last vertex of $\overline{\gamma}$ inside $\B_{\tau_0}^{(k-1)}$, and 
\begin{equation}\label{eq:event-F1k}
\mathcal{F}_1^{(k)}:=\left \{
   \mathcal{T}_{k-1} \geq \lambda^{-1}\Tol\left (\bar w_{\tau_0}^{(k-1)}\to \bar w_{\tau_0}^{(k)} \right )+ \lambda^{-1}\max_{x\in \B_{\tau_0}^{(k)}}\Tol(w_{\tau_0}^{(k)}\to x).
\right \},
\end{equation}
and
\begin{equation}\label{eq:event-F2k}
\mathcal{F}_2^{(k)} :=
\left\{T^{(k-1)}\left(\text{P}_{\tau_0}^{(k-1)} \to \partial_1 \text{EP}_{\tau_0}^{(k-1)}\right) > \sum_{i=1}^{k-1}\mathcal{T}_i\right\}.
\end{equation}
Thus we set
\begin{equation}\label{eq:1st-attempt-kball}
\mathcal{E}_{\tau_0}^{(k)}:= \mathcal{F}_1^{(k)}\cap \mathcal{F}_2^{(k)}.
\end{equation}
Note that the sum in \eqref{eq:event-F2k} is needed since one cannot guarantee that $\Fo$ has to enter some $\text{EP}_{\tau_0}^{(j)}$, $j<k-1$, before entering $\text{EP}_{\tau_0}^{(k-1)}$. 
\begin{Lemma}\label{lemma:pzatok}
There is a constant $c>0$ such that, for all large enough $R_1$ with respect to $G$, we have 
$
\P \left [ \mathcal{E}_{\tau_0}^{(k)} \right ]\geq 1-\exp\left(-c R_1^{2k+2}\right).
$
\end{Lemma}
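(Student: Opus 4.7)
The plan is to mirror the proof of Lemma~\ref{lemma:pzato1} essentially line by line, with the scale-$k$ parameters $R_k=R_1^{2(k-1)}$ and $\mathcal{T}_{k-1}=R_1^{2k+2}$ replacing the scale-$2$ quantities, and to verify that these scalings dominate the relevant passage-time expectations by a sufficient margin that the exponential decay in $R_1^{2k+2}$ survives all union bounds. I would first bound $\P[(\mathcal{F}_1^{(k)})^c]$ and $\P[(\mathcal{F}_2^{(k)})^c]$ separately, then combine via the union bound.

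For $\mathcal{F}_1^{(k)}$, I would split the failure event into two pieces. The piece $\Tol(\bar w_{\tau_0}^{(k-1)}\to \bar w_{\tau_0}^{(k)})$ is the passage time along a geodesic of length at most $R_1^{2k}+R_1^{2k-1}+16\delta+R_k\le 2R_1^{2k}$ (using the bound from Lemma~\ref{lemma:dist-agg} and the fact that $\bar w^{(k-1)}_{\tau_0}$, $\bar w^{(k)}_{\tau_0}$ lie on $\overline\gamma$ inside the respective balls). Demanding this passage time exceed, say, $\tfrac{1}{2}\lambda\mathcal{T}_{k-1}=\tfrac{\lambda}{2}R_1^{2k+2}$ then falls directly under the second inequality of Lemma~\ref{lemma:typical-FPP2}, giving a failure probability of at most $\exp(-c\,R_1^{2k+2})$ for a suitable $c>0$, provided $R_1$ is large enough (so that $S/\ell\sim R_1^{2}$ dominates $e\,\Delta$). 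For the piece $\max_{x\in\B_{\tau_0}^{(k)}}\Tol(w_{\tau_0}^{(k)}\to x)$, I take a union bound over the at most $\Delta^{R_k}=\Delta^{R_1^{2k-2}}$ vertices of $\B_{\tau_0}^{(k)}$, applying Lemma~\ref{lemma:typical-FPP2} to each geodesic from $w_{\tau_0}^{(k)}$. Since $R_1^{2k-2}\log\Delta$ is negligible compared to $R_1^{2k+2}$ for $R_1$ large relative to $\Delta$, this also contributes at most $\exp(-c\,R_1^{2k+2})$.

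For $\mathcal{F}_2^{(k)}$, I reuse the symmetry argument from the proof of Lemma~\ref{lemma:pzato1}. The event that some $v\in\partial_{k-1}\text{EP}_{\tau_0}^{(k-1)}$ satisfies $T^{(k-1)}(v\to \text{P}_{\tau_0}^{(k-1)})\le \sum_{i=1}^{k-1}\mathcal{T}_i$ has the same probability as the event that a rate-$1$ FPP starting from some $v\in\partial\text{P}_{\tau_0}^{(k-1)}$ reaches $\partial_{k-1}\text{EP}_{\tau_0}^{(k-1)}$ within the same time, because the i.i.d.\ rate-$1$ passage times are exchangeable along any path in $G^{(k-1)}$. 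By the very definition of $\text{EP}_{\tau_0}^{(k-1)}$, every such $v$ sits at graph distance exactly $\cout\sum_{j=1}^{k-1}\mathcal{T}_j$ from the boundary in $G^{(k-1)}$, so Lemma~\ref{lemma:typical-FPP} (applied in the graph $G^{(k-1)}$, whose degrees are bounded by those of $G$) gives failure probability at most $\exp(-\uc{c:typical-FPP}\sum_{i=1}^{k-1}\mathcal{T}_i)\le \exp(-\uc{c:typical-FPP}\,\mathcal{T}_{k-1})$ for each $v$. The boundary $\partial\text{P}_{\tau_0}^{(k-1)}$ has at most $R_1^{2k}+R_1^{2k-1}+16\delta+\Delta^{R_k}$ vertices, and once again this is dominated by $\exp(\tfrac{1}{2}\uc{c:typical-FPP}R_1^{2k+2})$ for $R_1$ large.

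The main technical obstacle is purely bookkeeping: making sure that the sub-exponential prefactors $\Delta^{R_k}=\Delta^{R_1^{2k-2}}$ arising in both bounds are swallowed by the $\exp(-c R_1^{2k+2})$ decay rate uniformly in $k$. This is the reason $\mathcal{T}_{k-1}$ was chosen two full powers of $R_1$ above the typical geodesic length $R_1^{2k}$ and four full powers above the ball radius $R_k=R_1^{2k-2}$; in particular, once $R_1$ is chosen large enough with respect to $\Delta$, $\lambda$, $\uc{c:typical-FPP}$ and $\delta$ (independently of $k$), the choice of $c$ in the statement is uniform and the bound $\P[\mathcal{E}_{\tau_0}^{(k)}]\ge 1-\exp(-c\,R_1^{2k+2})$ follows from the union bound $\P[(\mathcal{E}_{\tau_0}^{(k)})^c]\le \P[(\mathcal{F}_1^{(k)})^c]+\P[(\mathcal{F}_2^{(k)})^c]$.
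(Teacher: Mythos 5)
Your proposal is correct and follows essentially the same route as the paper: you bound $\P[(\mathcal{F}_1^{(k)})^c]$ and $\P[(\mathcal{F}_2^{(k)})^c]$ separately, applying Lemma~\ref{lemma:typical-FPP2} to the relevant geodesic segments, union-bounding over the $\Delta^{R_k}$ vertices of $\B_{\tau_0}^{(k)}$, and reusing the symmetry argument from Lemma~\ref{lemma:pzato1} for $\mathcal{F}_2^{(k)}$, with the key observation that $\mathcal{T}_{k-1}=R_1^{2k+2}$ exceeds both the geodesic length $\sim R_1^{2k}$ and $\log\Delta^{R_k}=R_1^{2k-2}\log\Delta$ by enough margin to absorb all prefactors uniformly in $k$ once $R_1$ is chosen large. (One tiny inaccuracy: vertices of $\text{P}_{\tau_0}^{(k-1)}$ are at $G^{(k-1)}$-distance \emph{at least}, not ``exactly'', $\cout\sum_{j=1}^{k-1}\mathcal{T}_j$ from $\partial_{k-1}\text{EP}_{\tau_0}^{(k-1)}$, but this is the direction you need.)
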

\begin{proof}
The proof is very similar to that of Lemma \ref{lemma:pzato1}; we will describe the main steps here.
Again for the purpose of this proof, denote by $\widehat{\pi}_{k-1}:=\widehat{\pi}_{k-1}(\overline{\gamma})$ the last vertices along $\overline{\gamma}$ such that 
$
   \widehat{\pi}_{k-1}\in \overline{\gamma}\cap \B_{\tau_0}^{(k-1)}.
$
Note that $\widehat{\pi}_{k-1} = \bar w_{\tau_0}^{(k-1)}$.
Moreover, set
\[
\Upsilon_{\tau_0}^{(k-1)}:=\lambda^{-1} \Tol(\widehat{\pi}_{k-1} \to w_{\tau_0}^{(k)})  +  \lambda^{-1}\sup_{x\in \partial \B_{\tau_0}^{(k)}}\Tol (w_{\tau_0}^{(k)} \to x).
\]
As before, the value of $\Upsilon_{\tau_0}^{(k-1)}$ is an over-estimate on the time needed for $\Fl$ to go from $ \B_{\tau_0}^{(k-1)}$ to fully occupy the set $ \text{P}_{\tau_0}^{(k-1)}$.
As for the case $k=2$, we observe that $T_{k-1}$ is such that
\[
\P\left (\mathcal{F}_1^{(k)}\right )\geq \P \left ( \Upsilon_{\tau_0}^{(k-1)}\leq \mathcal{T}_{k-1} \right ) \geq 1-e^{-R_1^{2k+2}/4}.
\]
Reasoning as in Lemma \ref{lemma:pzato1} we obtain
\[
\P \left( \mathcal{F}_2^{(k)} \right) \geq 
1 - |\text{P}_{\tau_0}^{(k-1)}|\exp\left(-\uc{c:typical-FPP}\sum_{i=1}^{k-1}\mathcal{T}_i\right).
\]
Since $\sum_{i=1}^{k-1} \mathcal{T}_i=\sum_{i=1}^{k-1} R_1^{2i+4}\geq R_1^{2k+2}$ and $|\text{P}_{\tau_0}^{(k-1)}| \leq 2R_{k+1}+\Delta^{R_k}\leq 2 R_1^{2k}+\Delta^{R_1^{2(k-1)}}$, we have 
\[
\P \left( \mathcal{F}_2^{(k)} \right) \geq 
1 - \exp\left(-\uc{c:typical-FPP}R_1^{2k+2}/2\right).
\]
%
The lemma then follows since $\P \left ( \mathcal{E}_{\tau_0}^{(k)}\text{ fails}\right ) \leq \P \left ( \mathcal{F}_1^{(k)} \text{ fails}\right ) + \P\left ( \mathcal{F}_2^{(k)} \text{ fails}\right )$.
\end{proof}
The end of this section is devoted to showing that the event that all balls $\left \{\B_{\tau_0}^{(k)}\right \}_{k=2}^\infty$ are filled up by $\Fl$ before $\Fo$ can come any close to them is bounded away from zero.

\begin{Theorem}\label{thm:proc}
Let $\mathcal{E}_{\tau_0}^{(k)}$ be defined as \eqref{eq:1st-attempt-kball} for all $k\geq 2$.
Then, taking $R_1$ large enough with respect to $G$, we obtain
$
\P \left [ \bigcap_{k=2}^\infty \mathcal{E}_{\tau_0}^{(k)}\right ]\geq \frac{2}{3}.
$
\end{Theorem}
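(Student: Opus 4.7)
The plan is to prove the theorem by a direct union bound over the complement events, exploiting the exponential decay of $\P[(\mathcal{E}_{\tau_0}^{(k)})^c]$ established in Lemma~\ref{lemma:pzatok} combined with the fact that the single estimate in Lemma~\ref{lemma:pzato1} (stated as $3/4$) can in fact be made arbitrarily close to $1$ by enlarging $R_1$. More precisely, I would write
\[
   \P\left[\bigcap_{k=2}^\infty \mathcal{E}_{\tau_0}^{(k)}\right]
   \;\geq\; 1 - \P\left[(\mathcal{E}_{\tau_0}^{(2)})^c\right] - \sum_{k=3}^\infty \P\left[(\mathcal{E}_{\tau_0}^{(k)})^c\right],
\]
so that pointwise bounds on the marginals suffice; no independence between the events is required.

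For the tail, Lemma~\ref{lemma:pzatok} yields $\P[(\mathcal{E}_{\tau_0}^{(k)})^c] \leq \exp(-cR_1^{2k+2})$ for every $k\geq 3$. Since $R_1^{2k+2}$ grows at least geometrically in $k$, the series $\sum_{k\geq 3}\exp(-cR_1^{2k+2})$ is dominated by its first term up to a factor of $2$, and hence can be made smaller than $1/6$ by taking $R_1$ large enough with respect to $G$ (and to $\lambda$, $\cin$, $\cout$, $\delta$).

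For the $k=2$ term, I would revisit the proof of Lemma~\ref{lemma:pzato1}: both pieces $\P[(\mathcal{F}_1^{(2)})^c]$ and $\P[(\mathcal{F}_2^{(2)})^c]$ were in fact shown to decay faster than any polynomial in $R_1$ (the first by the second part of Lemma~\ref{lemma:typical-FPP2}, the second by Lemma~\ref{lemma:typical-FPP} applied to a set of cardinality at most $R_1^4+R_1^3+16\delta+\Delta^{R_2}$ against the budget $\mathcal{T}_1=R_1^6$). Consequently, enlarging $R_1$ if necessary we get $\P[(\mathcal{E}_{\tau_0}^{(2)})^c] < 1/6$ as well, and combining the two estimates gives the required bound $2/3$.

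There is no real obstacle here beyond bookkeeping: the theorem is essentially a Borel--Cantelli-type consolidation of the uniform per-scale estimates already proven. The one point that deserves a sentence of care is that although the enlargements $\operatorname{EP}_{\tau_0}^{(k-1)}$ overlap for different $k$ (so the events $\mathcal{E}_{\tau_0}^{(k)}$ are not independent), the union bound does not require any independence, and the definition of $\mathcal{F}_2^{(k)}$ with the cumulative budget $\sum_{i=1}^{k-1}\mathcal{T}_i$ is precisely what makes the event $\bigcap_{k\geq 2}\mathcal{E}_{\tau_0}^{(k)}$ imply that $\Fl$ outruns $\Fo$ to every ball $\B_{\tau_0}^{(k)}$ simultaneously.
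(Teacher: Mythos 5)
Your proposal is correct and takes essentially the same route as the paper: a union bound over the complements $(\mathcal{E}_{\tau_0}^{(k)})^c$, with the tail controlled by the double-exponential decay in Lemma~\ref{lemma:pzatok}. The only superfluous step is your re-examination of Lemma~\ref{lemma:pzato1} to push $\P[(\mathcal{E}_{\tau_0}^{(2)})^c]$ below $1/6$: the stated $3/4$ already suffices, since for $R_1$ large $\sum_{k\geq 3}\exp(-cR_1^{2k+2})\leq 1/12$, and $1-\tfrac14-\tfrac1{12}=\tfrac23$ exactly.
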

\begin{proof}
This follows from Lemmas \ref{lemma:pzato1} and \ref{lemma:pzatok}, since 
$
\P \left [ \bigcap_{k=2}^\infty \mathcal{E}_{\tau_0}^{(k)}\right ]\geq 1-\sum_{k\geq 2} \P \left (\mathcal{E}_{\tau_0}^{(k)} \text{ fails}\right )$.
\end{proof}

\subsection{Completing the proof of Theorem~\ref{thm:survival_FPP_lambda}}
\begin{proof}[Proof of Theorem~\ref{thm:survival_FPP_lambda}]
The above argument (summarized in Theorem~\ref{thm:proc}) implies that the procedure succeeds with probability bounded from below by $2/3$, which is not enough for our purposes, as we want to show that it succeeds almost surely.

Suppose that there is a first value $K_0$ for which the event $\mathcal{E}_{\tau_0}^{(K_0)}$ fails to occur.
If this happens, then one of the following two events has occurred:
\begin{itemize}
\item The passage times in $\gamma_{w_{\tau_0}^{(K_0-1)},w_{\tau_0}^{(K_0)}}$ or in $\B_{\tau_0}^{(K_0)}$ are too large (delaying the progress of $\Fl$);
\item The passage times from $\partial_{K_0-1}\text{EP}_{\tau_0}^{(K_0-1)}$ to $\text{P}_{\tau_0}^{(K_0-1)}$ are too small (speeding up the detour of $\Fo$).
\end{itemize}
Now we observe that both these (bad) events are measurable with respect to the passage times inside the set $\cup_{k=2}^{K_0}\text{EP}_{\tau_0}^{(k-1)}$.
Thus, the event
\[
\{K_0 \text{ is the smallest value of }k\text{ for which }\mathcal{E}_{\tau_0}^{(k)}\text{ fails}\}
\]
is measurable with respect to the passage times inside $\text{EP}_{\tau_0}^{(K_0-1)}$.

Subsequently, we inductively define a sequence of stopping times $\tau_1, \tau_2, \ldots$ such that $\tau_0<\tau_1<\ldots$.
At each attempt $j\geq 0$ we consider the procedure described above applied to the aggregate $\Aol_{\tau_j}$.
More precisely, for a given attempt $j$, 
if the $j$-th attempt is successful (for some $j\geq 0$) then $\Fl$ will produce an infinite cluster, and we are done.
Otherwise, let $K_j$ be the first value for which $\mathcal{E}_{\tau_j}^{(K_j)} $ fails.
Inductively we define
\[
\tau_{j+1}=\tau_{j+1}(K_{j}):= \inf\left \{
t> \tau_j \ : \ 
\begin{split}
\bigcup_{s=0}^j \left(\bigcup_{k=2}^{K_j} \text{EP}_{\tau_s}^{(k-1)} \cup \B_{\tau_s}^{(1)}\right) \text{ are fully occupied}\\
\text{ by either $\Fo$ or $\Fl$} \text{ by time }t
\end{split}
\right \}.
\]
Notice that for every $j\geq 0$ the value of $\tau_j$ is defined so that all edges whose passage times have been observed during the first $j-1$ attempts have both endpoints occupied by either $\Fo$ or $\Fl$.
Therefore they will have no further influence on the future development of the process.
Since every attempt will succeed with probability bounded from below by $2/3$, which is independent of $A_{\tau_s}$, eventually the procedure will succeed almost surely.
\end{proof}

\section{Open questions and concluding remarks}\label{sect:questions}

Our Theorem \ref{thm:survival_FPP_1}, and consequently Corollary \ref{corollary:coexistence}, holds when the underlying graph is both hyperbolic and non-amenable.
However, it is easy to find graphs that are non-amenable but not hyperbolic for which our results hold. One example is a \emph{free product of groups}, defined as follows.
Consider $m\geq 2$ finitely generated groups $\Gamma_1, \Gamma_2, \ldots , \Gamma_m$, with identity elements $e_1, e_2, \ldots , e_m$ respectively.
Then the free product $\Gamma:=\Gamma_1 \ast \Gamma_2 \ast \ldots \ast \Gamma_m$ is the set of all words of the form
$
x_1 x_2 \cdots x_n
$,
where $x_1, x_2, \ldots, x_n \in \bigcup_{i=1}^m \Gamma_i\setminus \{e_i\}$.

A more intuitive way to visualize this product is as follows: consider a copy of $\Gamma_1$ and to each vertex $v\in \Gamma_1$ attach a copy of $\Gamma_2, \ldots, \Gamma_m$ by ``gluing'' (i.e., identifying) $e_2, \ldots , e_m$ and $v$ into a single vertex.
Then, inductively, for every vertex on each copy of $\Gamma_i$ attach a copy of $\Gamma_j$, for all $j\neq i$.
This construction gives rise to  a ``\emph{cactus-like}'' structure, which whenever all factors $\{\Gamma_i\}_{i=1}^m$ are finite, turns into a \emph{tree-like} structure.
In this latter case, $\Gamma$ is indeed hyperbolic (for example, a $d$-regular tree is a free product of $d$ copies of the trivial group of two elements). 
Free products have been intensively studied (cf.\ e.g., \cite{CandelleroGilch-Asymptotics,CandelleroGilchMueller-BRWs} and references therein) in relation with different behaviors of random walks and branching random walks on graphs, and it turns out that many interesting results in these works appear when $\Gamma$ is not hyperbolic.

Exploiting the cactus-like structure of $\Gamma$ (both in the hyperbolic and the non-hyperbolic case), one can see that if $\Fo$ survives with positive probability on at least one of the infinite factors, then the $\Fo$ and $\Fl$ will coexist forever.
An example of this phenomenon occurring when $\Gamma$ is not hyperbolic is when one of the free factors is $\Z^d$, with $d\geq 2$, and the initial density of seeds $\mu$ is small enough.
The fact that $\Fo$ can survive with positive probability on $\Z^d$ is shown in \cite{Stauffer-Sidoravicius-MDLA}, thus one considers the evolution of $\Fo$ on the initial copy of $\Z^d$ and that of $\Fl$ on any copy whose origin (which, by definition coincides with the identity element of the group) is occupied by a seed.
The cactus-like structure of $\Gamma$ guarantees that the behavior of the process inside each factor does not interfere with what happens in other factors.
This example shows that hyperbolicity is not a necessary condition for 
Theorem \ref{thm:survival_FPP_1} and Corollary \ref{corollary:coexistence}. 

\begin{Question}
   Is there an analogue of Theorem \ref{thm:survival_FPP_1} and Corollary \ref{corollary:coexistence} for general non-amenable graphs?
\end{Question}

Our Theorem \ref{thm:survival_FPP_lambda} shows that, on hyperbolic graphs, there is no regime of strong survival. 
It is natural to believe that the growth of the graph plays an important role in the survival of $\Fl$. 
\begin{Question}                       
   Is there an analogue of Theorem \ref{thm:survival_FPP_lambda} (that is, survival of $\Fl$ for all $\mu,\lambda$) for general graphs of exponential growth?
\end{Question}

The above is not true on $\mathbb{Z}^d$, as shown in \cite{Stauffer-Sidoravicius-MDLA}. The proof in \cite{Stauffer-Sidoravicius-MDLA} uses the fact that FPP has a shape on $\mathbb{Z}^d$.
It seems reasonable to believe that the same should hold for any graph $G$ whenever FPP in $G$ concentrates around a deterministic shape. This could be the case, for example, in graphs of polynomial growth (excluding trivial cases 
such as $\mathbb{Z}$, where the isoperimetric dimension is $1$ --- see~\cite{CandelleroTeixeira}).
\begin{Question}
   Is there a regime of strong survival in graphs of polynomial growth with isoperimetric dimension bigger than $1$? 
\end{Question}

Finally, a fascinating open question from \cite{Stauffer-Sidoravicius-MDLA} is whether there exists coexistence in $\Z^d$, where even the case $d=2$ is open. 
\begin{Question}
   Is there coexistence on $\mathbb{Z}^d$, for $d\geq 2$?
\end{Question}

\section*{Acknowledgements}
This work started when E. Candellero was affiliated to the University of Warwick.
E. Candellero acknowledges support from the project ``Programma per Giovani Ricercatori Rita Levi Montalcini'' awarded by the Italian Ministry of Education and support by ``INdAM -- GNAMPA Project 2019''.
A. Stauffer acknowledges support from an EPSRC Early Career Fellowship.

\section*{Appendix}
\appendix
\section{Proof of Proposition \ref{prop:detour_cylinders}}\label{sect:proof_prop_detour}
The aim of this section is to provide a proof of Proposition \ref{prop:detour_cylinders}.
We are given two vertices $x,y$ at distance $d\geq  50 \delta$ (cf.\ Proposition~\ref{prop:detour_cylinders}), and the constraint that the path under consideration must go from $x$ to $y$ avoiding the cylinder $\cyl^{(L)}_{u,v} $, where $u$ and $v$ are as in the statement. 

Start by observing that the vertices $u$ and $v$ belong to a geodesic segment from $x$ to $y$, hence the segment of $\gamma_{x,y}$ joining $u$ and $v$ is a geodesic segment from $u$ to $v$, which we denote $\gamma_{u,v}$ (geodesic segments are piece-wise geodesics).

The classical result by Gromov (cf.\ Proposition \ref{prop:gromov-result}, or \cite[Section 7]{Gromov}) states that if on $G$ all triangles are $\delta$-thin, then the length of a path between two vertices $x$ and $y$ that avoids a ball of radius $r$ centered at a point of the geodesic joining $x$ and $y$ has length at least $\delta 2^{r/\delta}$.
We exploit this fact, using that $\cyl^{(L)}_{u,v} $ is defined as a union of balls centered at a $d_G$-geodesic.
Each path that avoids the cylinder must avoid many balls (a number linear in the length $d$ of the cylinder), hence by Gromov's result it will have length of order at least $\delta 2^{r/\delta}\cdot d$.

Now we proceed with a formal proof.
Let $P $ denote any path that goes from $x$ to $y $ that avoids the cylinder $\cyl^{(L)}_{u,v}$, and let $n$ denote its $d_G$-length.
In particular, it will be convenient to express $P$ as a sequence of vertices such as 
\[
P=(P_0, P_1, \ldots, P_n), \quad \text{ with }P_0=x \text{ and }P_n=y.
\]
We will find a lower bound on $n$.
A first consideration is that since $P$ avoids $\cyl^{(L)}_{u,v}$, it must avoid the first ball of $\cyl^{(L)}_{u,v}$, meaning
$
P\cap B_G(u, L)=\emptyset.
$
For every vertex $P_\ell$ of the path $P$, let $\Gamma_{P_\ell, y}$ denote the set of geodesics starting at $P_\ell$ and ending at $y$.

\paragraph{Step 1.}
Consider the first (i.e.\ the smallest) index $\ell$ for which $P_\ell$ has a geodesic to $y$ that does \emph{not} intersect the ball $B_G(u, L)$, and set $\w_1:=P_\ell$.
In formulas:
\[
\w_1:=\min_{1\leq \ell\leq n}\{P_\ell \ : \ \exists \gamma \in \Gamma_{P_\ell, y} \text{ s.t.\ }\gamma\cap B_G(u, L)=\emptyset\}.
\]
Note that since $P$ avoids the cylinder of radius $L$, we have that $d_G(\w_1, \gamma_{u,v})\geq L$.
Having found $\w_1$, take the previous vertex in $P$ (which in our previous notation corresponded to $P_{\ell-1}$), and denote it by $\ow_1$.
For a graphic representation, see Figure \ref{fig:prop-2}.


\begin{figure}[h!]
\begin{center}
\includegraphics[scale=0.9]{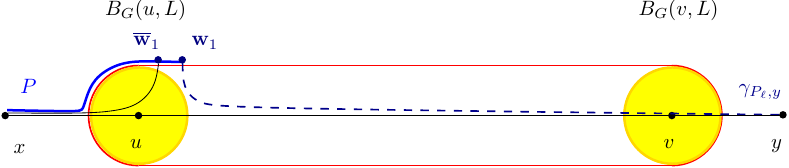}
\caption{The construction described above: the yellow balls are $ B_G(u,L)$ and $ B_G(v,L)$.
The path $P$ is drawn in blue, and $\w_1$ is the first vertex that has a  geodesic to $y$ that does not touch $ B_G(u,L)$ (the dashed line).
The thin black line is a geodesic connecting $x$ to $\ow_1$.}
\label{fig:prop-2}
\end{center}
\end{figure}

The next result will be important to show that the length of the path $P$ between $x$ and $\w_1$ is very large.
\begin{Claim}\label{claim:x-w1}
For all geodesics $\gamma_{x, \w_1}$ between $x$ and $\w_1$  we have $ \gamma_{x, \w_1}\cap B_G(u, \delta)\neq \emptyset$.
\end{Claim}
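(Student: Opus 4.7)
}
The plan is to apply $\delta$-thinness to the geodesic triangle with vertices $x$, $\w_1$, $y$, using as its three sides: an arbitrary fixed geodesic $\gamma_{x,\w_1}$ (the one whose intersection with $B_G(u,\delta)$ we want to certify), a geodesic $\gamma_{\w_1,y}$ chosen so that it avoids $B_G(u,L)$ (such a geodesic exists by the very definition of $\w_1$), and the geodesic $\gamma_{x,y}$ appearing in the hypothesis of Proposition \ref{prop:detour_cylinders}, which passes through $u$.

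The key point is that $u$ is a distinguished vertex sitting on the side $\gamma_{x,y}$ of this triangle. By $\delta$-hyperbolicity, applied as in \eqref{eq:def-hyperbolic}, there must be a vertex $v^{*}\in \gamma_{x,\w_1}\cup \gamma_{\w_1,y}$ with $d_G(u,v^{*})\leq \delta$. Since $L\geq 9\delta > \delta$, a vertex at distance at most $\delta$ from $u$ lies in $B_G(u,L)$; but by construction $\gamma_{\w_1,y}\cap B_G(u,L)=\emptyset$. This rules out $v^{*}\in \gamma_{\w_1,y}$, forcing $v^{*}\in \gamma_{x,\w_1}\cap B_G(u,\delta)$, which is exactly the conclusion of the claim.

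Since the geodesic $\gamma_{x,\w_1}$ was arbitrary, the claim follows. I don't anticipate any real obstacle here: the only thing to check carefully is that the dichotomy produced by $\delta$-thinness lands on the $\gamma_{x,\w_1}$ side rather than on $\gamma_{\w_1,y}$, and the hypothesis $L\geq 9\delta$ gives plenty of room for this. Note that one does have to choose $\gamma_{\w_1,y}$ (using the definition of $\w_1$) before invoking $\delta$-thinness, as the conclusion of the claim is stated uniformly over $\gamma_{x,\w_1}$ but only existentially over $\gamma_{\w_1,y}$.
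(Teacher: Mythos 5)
Your proposal is correct and takes essentially the same route as the paper: both fix a geodesic $\gamma_{\w_1,y}$ that avoids $B_G(u,L)$ (available by the definition of $\w_1$), apply $\delta$-thinness of the triangle $\{x,\w_1,y\}$ to the point $u\in\gamma_{x,y}$, and rule out the $\gamma_{\w_1,y}$ side using $L\geq 9\delta>\delta$.
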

\begin{proof}
Let $\gamma_{\w_1,y}$ denote a geodesic such that $\gamma_{\w_1,y}\cap B_G(u, L)=\emptyset$. 
Then, clearly
\begin{equation}\label{eq:aux-w1}
d_G(u,\gamma_{\w_1,y})\geq L\geq 9\delta>\delta.
\end{equation}
Moreover, since the triangle $\{x, \w_1, y\}$ is $\delta$-thin, every vertex of $\gamma_{x,y}$ has to be contained in the set $\cyl^{(\delta)}_{x,\w_1}\cup \cyl^{(\delta)}_{\w_1, y}$.
Since $u \in \gamma_{x,y}$, then \eqref{eq:aux-w1} implies that $u\in \cyl^{(\delta)}_{x,\w_1}$, which is the claim.
\end{proof}
Now choose any geodesic $\gamma_{x, \w_1}$, by Claim \ref{claim:x-w1} we know that there is at least a vertex on $\gamma_{x, \w_1}$ at distance at most $\delta$ from $u$.
Consider a ball of radius $L-\delta$ centered at any such vertex, and call it $B_1$.
Since $B_1\subset B_G(u,L)$ we deduce that $P$ when going from $x$ to $\w_1$ avoids $B_1$ which is centered at a vertex of the geodesic $\gamma_{x, \w_1}$.
By Proposition \ref{prop:gromov-result} we have that 
\begin{equation}\label{eq:d_2_delta_1}
d_P(x,\w_1)\geq \delta 2^{(L-\delta)/\delta}
,
\end{equation}
where we have set
\begin{equation}\label{eq:d_P}
d_P(a,b):=\text{number of edges that $P$ crosses on its way from vertex $a$ to vertex } b.
\end{equation}

\paragraph{Step 2.}
Fix a vertex $\omega_1\in \gamma_{u,v}$ such that $d_G(u,\omega_1)= L+1+ 2\delta $ (i.e., at distance $ d_G(x,u)+L+1+ 2\delta $ from $x$).
The next claim will be used to show that every geodesic $\gamma_{\w_1, y}$ passes at distance at most $2\delta$ from $\omega_1$.
For a graphical representation see Figure \ref{fig:prop-3}.

\begin{figure}[h!]
\begin{center}
\includegraphics[scale=0.9]{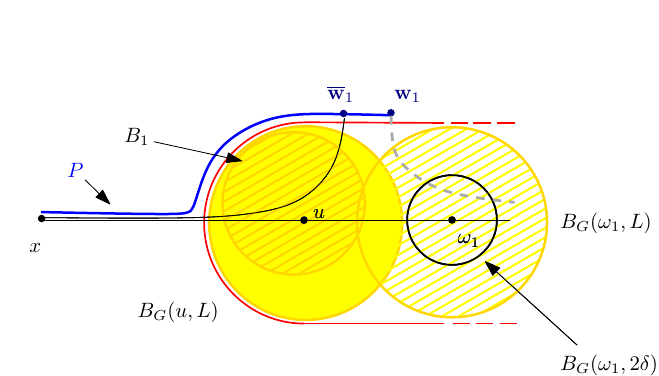}
\caption{A closeup of Figure \ref{fig:prop-2}, with a representation of $B_1$, of $B_G(\omega_1,2\delta)$ and $B_G(\omega_1,L)$.}
\label{fig:prop-3}
\end{center}
\end{figure}

\begin{Claim}\label{claim:omega_1}
For all vertices $b\in B_G(u, L)$ and all geodesics $\gamma_{b,y}$ we have $\gamma_{b,y}\cap B_G(\omega_1, \delta)\neq \emptyset $.
\end{Claim}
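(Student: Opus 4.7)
The plan is to apply $\delta$-thinness to the geodesic triangle with vertices $b$, $u$, and $y$, and then rule out the ``wrong'' side by a simple distance estimate. Specifically, I would take the three sides to be the given geodesic $\gamma_{b,y}$, an arbitrary geodesic $\gamma_{b,u}$, and, crucially, the geodesic $\gamma_{u,y}$ obtained as the sub-segment of $\gamma_{x,y}$ from $u$ to $y$. The point of this choice is that $\omega_1$ lies on $\gamma_{u,v}\subset\gamma_{x,y}$ between $u$ and $y$, so $\omega_1\in\gamma_{u,y}$ for this particular choice of geodesic (this is where I implicitly use that $d_G(u,v)\geq L+1+2\delta$ so that $\omega_1$ is well defined).

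By $\delta$-thinness applied to $\omega_1$ on the side $\gamma_{u,y}$, there is a vertex $v'\in\gamma_{b,u}\cup\gamma_{b,y}$ with $d_G(\omega_1,v')\leq\delta$. The claim will follow if I can exclude the possibility $v'\in\gamma_{b,u}$. But for such a $v'$ one would have $d_G(u,v')\leq d_G(u,b)\leq L$ since $b\in B_G(u,L)$ and $v'$ lies on a geodesic from $b$ to $u$, and hence by the reverse triangle inequality
\[
d_G(\omega_1,v')\geq d_G(u,\omega_1)-d_G(u,v')\geq (L+1+2\delta)-L=1+2\delta>\delta,
\]
contradicting the choice of $v'$. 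Therefore $v'\in\gamma_{b,y}$, which gives $\gamma_{b,y}\cap B_G(\omega_1,\delta)\neq\emptyset$, as required.

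There is essentially no obstacle here: the whole argument is a one-line consequence of hyperbolicity once the correct geodesic triangle is set up. The only thing to be careful about is that the side $\gamma_{u,y}$ is chosen to actually pass through $\omega_1$ (using the sub-geodesic of $\gamma_{x,y}$), so that $\delta$-thinness can be applied to $\omega_1$ itself rather than to some nearby vertex. The slack $1+2\delta$ in the distance estimate is exactly what the choice $d_G(u,\omega_1)=L+1+2\delta$ provides, and it is the reason for picking this particular radius in Step 2.
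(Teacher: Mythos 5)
Your proof is correct and follows the same route as the paper: set up the geodesic triangle $\{b,u,y\}$, choose the side $\gamma_{u,y}$ through $\omega_1$ (as a sub-segment of $\gamma_{x,y}$), apply $\delta$-thinness, and rule out the side $\gamma_{b,u}$ because $d_G(u,\omega_1)=L+1+2\delta$ puts $\omega_1$ at distance $>\delta$ from $B_G(u,L)\supseteq\gamma_{b,u}$. Your write-up is, if anything, slightly cleaner than the paper's, since it applies $\delta$-thinness explicitly to the point $\omega_1$ on the side that contains it rather than stating the inclusion for the side $\gamma_{b,y}$.
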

\begin{proof}
The proof of this fact follows again from $\delta$-thinness. 
More precisely, if we consider the triangle $\{b,u,y\}$, then for any geodesic $\gamma_{b,y}$ we have 
$
\gamma_{b,y}\subset \cyl^{(\delta)}_{b,u}\cup \cyl^{(\delta)}_{u,y}.
$
Since $b\in B_G(u, L)$, then
$
d_G(\gamma_{b,u}, \omega_1)\geq d_G(\omega_1, B_G(u,L))=2\delta+1>\delta.
$
Thus we must have that $d_G(\omega_1, \gamma_{b,y})\leq \delta$, which is the claim.
\end{proof}
By definition of $\ow_1$ we have that all geodesics $\gamma_{\ow_1, y}\cap B_G(u, L)\neq \emptyset$.
Therefore, for every vertex in this intersection we can apply Claim \ref{claim:omega_1}, obtaining that for all geodesics $\gamma_{\ow_1, y}$
\[
\gamma_{\ow_1, y}\cap B_G(\omega_1, \delta)\neq \emptyset.
\]
Now we observe that, as $d_G(\w_1, \ow_1)=1$ and as we assume that $\delta>0$, for every pair of geodesics $\gamma_{\w_1,y}$ and $\gamma_{\ow_1,y}$, we have
$
\sup_{z \in \gamma_{\w_1,y}}d_G(z, \gamma_{\ow_1,y})\leq \max\{1, \delta\}=\delta.
$
Thus, for all geodesics $\gamma_{\w_1,y}$ we have
\begin{equation}\label{eq:w1-omega1}
\gamma_{\w_1,y}\cap B_G(\omega_1, 2\delta)\neq \emptyset.
\end{equation}

\paragraph{Step 3.}
Now consider the ball $B_G(\omega_1, L) $, this will have a similar role to the previous $B_G(u, L) $.
In order to continue, we define a vertex $\w_2$ (which will have a similar role to $\w_1$ in the previous step).
On the path $P$ find the first vertex $\w_2$ (after $\w_1$ in $P$) that has a geodesic to $y$ that does not cross $B_G(\omega_1, L)$.
In formulas:
\[
\w_2:=\min_{1\leq \ell\leq n}\{P_\ell> \w_1 \ : \ \exists \gamma \in \Gamma_{P_\ell, y} \text{ such that }\gamma\cap B_G(\omega_1, L)=\emptyset\},
\]
where $P_\ell> \w_1$ means that $P_\ell$ comes after $\w_1$ in the path $P$.
Moreover, let $\ow_2$ denote its predecessor on the path $P$.
Note that from the respective definitions it follows that the vertices $\ow_1, \w_1, \w_2$ are all distinct. 

At this point, a proof completely analogous to that of Claim \ref{claim:x-w1} (replacing $u$ with $\omega_1$, $x$ with $\w_1$ and $\w_1$ with $\w_2$) shows that for all geodesics $\gamma_{\w_1,\w_2}\in \Gamma_{\w_1,\w_2}$ we have
$
\gamma_{\w_1,\w_2}\cap B_G(\omega_1 , 4 \delta)\neq \emptyset.
$
The factor $4\delta$ comes from the following facts. 
The distance between $\omega_1$ and any geodesic $\gamma_{\w_1,y}$ is at most $2\delta$, as stated in \eqref{eq:w1-omega1}.
Now let $\omega_1'$ denote any vertex that belongs to $\gamma_{\w_1,y} \cap B_G(\omega_1 , 2 \delta)$.
The proof of Claim \ref{claim:x-w1} together with the subsequent reasoning shows that $d_G(\omega_1', \gamma_{\w_1,\w_2})\leq 2\delta$.
Thus,
\[
d_G(\omega_1,\gamma_{\w_1,\w_2})\leq d_G(\omega_1,\omega_1')+d_G(\omega_1',\gamma_{\w_1,\w_2})\leq 4\delta.
\]
Subsequently, just as we did in Step 1 (cf.\ \eqref{eq:d_2_delta_1}) we can deduce that there is a ball $B_2$ of radius $L-4\delta$ and centered at some point of any geodesic $\gamma_{\w_1, \w_2}$  such that the portion of $P$ joining $\w_1$ to $\w_2$ avoids $B_2$.
Proposition \ref{prop:gromov-result} implies
\begin{equation}\label{eq:d_2_delta_2}
d_P(\w_1, \w_2)\geq \delta 2^{(L-4\delta)/\delta},
\end{equation}
which by construction yields to
\[
d_P(x, \w_2)\stackrel{ \eqref{eq:d_2_delta_1}, \eqref{eq:d_2_delta_2} }{=} d_P(x, \w_1)+d_P(\w_1, \w_2)\geq 2\delta 2^{(L-4\delta)/\delta}.
\]
As we did at the beginning of Step 2, fix a vertex $\omega_2\in \gamma_{u,v}$ such that $d_G(\omega_1,\omega_2)= L+1+ 2\delta $.
Then, with a similar proof to that of Claim \ref{claim:omega_1} we can show that for all vertices $b\in B_G(\omega_1, L)$ and all geodesics $\gamma_{b,y}$ we have
$
\gamma_{b,y}\cap B_G(\omega_2, \delta)\neq \emptyset.
$
As in Step 2, this yields that for all geodesics $\gamma_{\w_2,y}$
$
\gamma_{\w_2,y}\cap B_G(\omega_2, 2\delta) \neq \emptyset.
$

\paragraph{Step 4.}
At this point we can start an inductive procedure to find a lower bound on the length of $P$, in particular it suffices to repeat Step 3 until we get close to the end of the cylinder.
More precisely, the $k$-th time that we start over with Step 3 we consider the ball $B_G(\omega_{k-2}, L)$ replaced with $B_G(\omega_{k-1}, L)$, define a vertex $\w_k$ on the path $P$ such that
\[
\w_k:=\min_{1\leq \ell\leq n}\{P_\ell> \w_{k-1} \ : \ \exists \gamma \in \Gamma_{P_\ell, y} \text{ such that }\gamma\cap B_G(\omega_{k-1}, L)=\emptyset\},
\]
and let $\ow_k$ denote its predecessor on the path $P$.

At this point, a proof completely analogous to that of Claim \ref{claim:x-w1} (inductively replacing $u$ with $\omega_{k-2}$, $x$ with $\w_{k-1}$ and $\w_1$ with $\w_k$) shows that for all geodesics $\gamma_{\w_{k-1},\w_k}$ we have
$
\gamma_{\w_{k-1},\w_k}\cap B_G(\omega_{k-1} , 4 \delta)\neq \emptyset.
$
We note that the factor $4\delta$ comes from the same reasoning as in Step 3.
From this we deduce that
$
d_P(\w_{k-1}, \w_k)\geq \delta 2^{(L-4\delta)/\delta}.
$
In conclusion we obtain
\begin{equation}\label{eq:d_2_delta_sum}
d_P(x, \w_k)\geq d_P(x, \w_1)+\ldots + d_P(\w_{k-1}, \w_k)\geq k\delta 2^{(L-4\delta)/\delta}\stackrel{L\geq 9\delta }{\geq }k\delta 2^{L/(2\delta)}.
\end{equation}
Then we define $\omega_k\in \gamma_{u,v}$ such that $d_G(\omega_{k-1},\omega_k)= L+1+ 2\delta $.
Then we can show for all geodesics $\gamma_{\w_k,y}$ we have 
$
\gamma_{\w_k,y}\cap B_G(\omega_k, 2\delta) \neq \emptyset,
$
and proceed inductively.

We have good control on the positions of the vertices $\{\w_j\}_j$, and therefore we can apply these iterations safely until we get to balls towards the end of the cylinder (i.e., close to $v$).
To be on the safe side, we can perform this reasoning for almost the whole length of the cylinder, just ignoring the last few balls.
We continue our iterations until the last ball reaches distance $2L$ from the end of the cylinder, which means distance $2L+d_G(v,y)$ from vertex $y$.

The total number $K$ of such iterations is the number of balls that $P$ avoids and are used to define the sequence of vertices $\{\w_j\}_{j=1}^K$.
Thus, a lower bound on $K$ is given by
\begin{equation}\label{eq:nr_avoided_balls}
K\geq 
\left \lfloor \frac{d_G(u,v)-2L}{L+1+2\delta} \right \rfloor  \geq \frac{d_G(u,v)}{3 L}.
\end{equation}
Finally, by putting together \eqref{eq:d_2_delta_sum} and \eqref{eq:nr_avoided_balls}, we obtain the proposition.


\begin{thebibliography}{10}

\bibitem{ADH}
D.~Ahlberg, M.~Deijfen, and C.~Hoffman.
\newblock The two-type {R}ichardson model in the half-plane.
\newblock {\em Ann. Appl. Probab.}, 30(5):2261--2273, 2020.

\bibitem{Auffinger}
A.~Auffinger, M.~Damron, and J.~Hanson.
\newblock {\em 50 Years of First-Passage Percolation}.
\newblock University Lecture Series. American Mathematical Society, 2017.

\bibitem{Benjamini}
I.~Benjamini.
\newblock Survival of the weak in hyperbolic spaces, a remark on competition
  and geometry.
\newblock {\em Proc. Amer. Math. Soc.}, 130(3):723--726, 2002.

\bibitem{BenjaminiPeres_BRW}
I.~Benjamini and Y.~Peres.
\newblock Markov chains indexed by trees.
\newblock {\em Ann. Probab.}, 22(1):219--243, 1994.

\bibitem{Benjamini-Schramm-GAFA}
I.~Benjamini and O.~Schramm.
\newblock Every graph with a positive {C}heeger constant contains a tree with a
  positive {C}heeger constant.
\newblock {\em Geom. Funct. Anal.}, 7(3):403--419, 1997.

\bibitem{BenjaminiTessera}
I.~Benjamini and R.~Tessera.
\newblock First passage percolation on a hyperbolic graph admits bi-infinite
  geodesics.
\newblock {\em Electron. Commun. Probab.}, 22:Paper No. 14, 8, 2017.

\bibitem{Bogopolski}
O.~Bogopolski.
\newblock Infinite commensurable hyperbolic groups are bi-lipschitz equivalent.
\newblock In {\em Proceedings of {I}ntern. {A}lgebraic {C}onf. dedicated to the
  memory of {D. K. F}addeev}, pages 168--168. 1997.

\bibitem{BrieusselGournay}
J.~Brieussel and A.~Gournay.
\newblock Connectedness of spheres in {C}ayley graphs.
\newblock {\em Algebra Discrete Math.}, 26(2):190--246, 2018.

\bibitem{CandelleroGilch-Asymptotics}
E.~Candellero and L.~A. Gilch.
\newblock Phase transitions for random walk asymptotics on free products of
  groups.
\newblock {\em Random Structures Algorithms}, 40(2):150--181, 2012.

\bibitem{CandelleroGilchMueller-BRWs}
E.~Candellero, L.~A. Gilch, and S.~M\"uller.
\newblock Branching random walks on free products of groups.
\newblock {\em Proc. Lond. Math. Soc. (3)}, 104(6):1085--1120, 2012.

\bibitem{Candellero-Stauffer-NotMonotone}
E.~Candellero and A.~Stauffer.
\newblock {F}irst passage percolation in hostile environment is not monotone.
\newblock {\em Preprint}.

\bibitem{CandelleroTeixeira}
E.~Candellero and A.~Teixeira.
\newblock Percolation and isoperimetry on roughly transitive graphs.
\newblock {\em Ann. Inst. Henri Poincar\'{e} Probab. Stat.}, 54(4):1819--1847,
  2018.

\bibitem{Mahoney}
W.~Chen, W.~Fang, G.~Hu, and M.~W. Mahoney.
\newblock On the hyperbolicity of small-world and tree-like random graphs.
\newblock In {\em Algorithms and Computation - 23rd International Symposium,
  {ISAAC} 2012, Taipei, Taiwan, December 19-21, 2012. Proceedings}, pages
  278--288, 2012.

\bibitem{Papadopoulos}
M.~Coornaert, T.~Delzant, and A.~Papadopoulos.
\newblock {\em G\'eom\'etrie et th\'eorie des groupes}, volume 1441 of {\em
  Lecture Notes in Mathematics}.
\newblock Springer-Verlag, Berlin, 1990.
\newblock Les groupes hyperboliques de Gromov. [Gromov hyperbolic groups], With
  an English summary.

\bibitem{Deijfen-Haggstrom}
M.~Deijfen and O.~H\"{a}ggstr\"{o}m.
\newblock Nonmonotonic coexistence regions for the two-type {R}ichardson model
  on graphs.
\newblock {\em Electron. J. Probab.}, 11:no. 13, 331--344, 2006.

\bibitem{Yadin}
H.~{Duminil-Copin}, S.~{Goswami}, A.~{Raoufi}, F.~{Severo}, and A.~{Yadin}.
\newblock {Existence of phase transition for percolation using the Gaussian
  Free Field}.
\newblock {\em ArXiv e-prints}, June 2018.

\bibitem{GM2005}
O.~Garet and R.~Marchand.
\newblock Coexistence in two-type first-passage percolation models.
\newblock {\em Ann. Appl. Probab.}, 15(1A):298--330, 02 2005.

\bibitem{Gouezel}
S.~Gou{\"e}zel.
\newblock Local limit theorem for symmetric random walks in {G}romov-hyperbolic
  groups.
\newblock {\em J. Amer. Math. Soc.}, 27(3):893--928, 2014.

\bibitem{GouezelLalley}
S.~Gou{\"e}zel and S.~P. Lalley.
\newblock Random walks on co-compact {F}uchsian groups.
\newblock {\em Ann. Sci. \'Ec. Norm. Sup\'er. (4)}, 46(1):129--173 (2013),
  2013.

\bibitem{Grimmett}
G.~Grimmett.
\newblock {\em Percolation}, volume 321 of {\em Grundlehren der Mathematischen
  Wissenschaften [Fundamental Principles of Mathematical Sciences]}.
\newblock Springer-Verlag, Berlin, second edition, 1999.

\bibitem{Gromov}
M.~Gromov.
\newblock Hyperbolic groups.
\newblock In {\em Essays in group theory}, volume~8 of {\em Math. Sci. Res.
  Inst. Publ.}, pages 75--263. Springer, New York, 1987.

\bibitem{RandomGroups-Gromov}
M.~Gromov.
\newblock Random walk in random groups.
\newblock {\em Geom. Funct. Anal.}, 13(1):73--146, 2003.

\bibitem{HP}
O.~H\"aggstr\"om and R.~Pemantle.
\newblock First passage percolation and a model for competing spatial growth.
\newblock {\em Journal of Applied Probability}, 35(3):683--692, 1998.

\bibitem{HP2}
O.~H\"aggstr\"om and R.~Pemantle.
\newblock Absence of mutual unbounded growth for almost all parameter values in
  the two-type {R}ichardson model.
\newblock {\em Stochastic Process. Appl.}, 90(2):207--222, 2000.

\bibitem{hamann}
M.~Hamann.
\newblock On the tree-likeness of hyperbolic spaces.
\newblock {\em Mathematical Proceedings of the Cambridge Philosophical
  Society}, page 1–17, 2017.

\bibitem{Hoffman}
C.~Hoffman.
\newblock Coexistence for richardson type competing spatial growth models.
\newblock {\em Ann. Appl. Probab.}, 15(1B):739--747, 02 2005.

\bibitem{Hoffman2008}
C.~Hoffman.
\newblock Geodesics in first passage percolation.
\newblock {\em Ann. Appl. Probab.}, 18(5):1944--1969, 10 2008.

\bibitem{Hutchcroft}
T.~Hutchcroft.
\newblock Percolation on hyperbolic graphs.
\newblock {\em Geom. Funct. Anal.}, 29(3):766--810, 2019.

\bibitem{WSK-bis}
W.~S. Kennedy, I.~Saniee, and O.~Narayan.
\newblock On the hyperbolicity of large-scale networks and its estimation.
\newblock In {\em 2016 {IEEE} International Conference on Big Data, BigData
  2016, Washington DC, USA, December 5-8, 2016}, pages 3344--3351, 2016.

\bibitem{Ledrappier}
F.~Ledrappier.
\newblock Regularity of the entropy for random walks on hyperbolic groups.
\newblock {\em Ann. Probab.}, 41(5):3582--3605, 2013.

\bibitem{LyonsPeres-book}
R.~Lyons and Y.~Peres.
\newblock {\em Probability on trees and networks}, volume~42 of {\em Cambridge
  Series in Statistical and Probabilistic Mathematics}.
\newblock Cambridge University Press, New York, 2016.

\bibitem{Mathieu}
P.~Mathieu.
\newblock Differentiating the entropy of random walks on hyperbolic groups.
\newblock {\em Ann. Probab.}, 43(1):166--187, 2015.

\bibitem{Montgolfier}
F.~d. Montgolfier, M.~Soto, and L.~Viennot.
\newblock Treewidth and hyperbolicity of the internet.
\newblock In {\em Proceedings of the 2011 IEEE 10th International Symposium on
  Network Computing and Applications}, NCA '11, pages 25--32, Washington, DC,
  USA, 2011. IEEE Computer Society.

\bibitem{Ohshika}
K.~Ohshika.
\newblock {\em Discrete groups}, volume 207 of {\em Translations of
  Mathematical Monographs}.
\newblock American Mathematical Society, Providence, RI, 2002.
\newblock Translated from the 1998 Japanese original by the author, Iwanami
  Series in Modern Mathematics.

\bibitem{RandomGroups-Ollivier}
Y.~Ollivier.
\newblock {\em A {J}anuary 2005 invitation to random groups}, volume~10 of {\em
  Ensaios Matem\'aticos [Mathematical Surveys]}.
\newblock Sociedade Brasileira de Matem\'atica, Rio de Janeiro, 2005.

\bibitem{Stauffer-Sidoravicius-MDLA}
V.~Sidoravicius and A.~Stauffer.
\newblock Multi-particle diffusion limited aggregation.
\newblock {\em Invent. Math.}, 218(2):491--571, 2019.

\bibitem{RandomGroups-Silberman}
L.~Silberman.
\newblock Addendum to: ``{R}andom walk in random groups'' [{G}eom.\ {F}unct.\
  {A}nal.\ {\bf 13} (2003), no.\ 1, 73--146; mr1978492] by {M}. {G}romov.
\newblock {\em Geom. Funct. Anal.}, 13(1):147--177, 2003.

\bibitem{Guo}
G.~A. Tong, W.~Wu, L.~Guo, D.~Li, C.~Liu, B.~Liu, and D.~Du.
\newblock An efficient randomized algorithm for rumor blocking in online social
  networks.
\newblock In {\em IEEE INFOCOM 2017 - IEEE Conference on Computer
  Communications}, pages 1--9, May 2017.

\end{thebibliography}
\end{document}